\DeclareMathOperator{\GL}{GL}
\DeclareMathOperator{\SL}{SL}
\let\sl\relax
\DeclareMathOperator{\sl}{\mathfrak{sl}}
\DeclareMathOperator{\Sp}{Sp}
\DeclareMathOperator{\SU}{SU}
\DeclareMathOperator{\su}{\mathfrak{su}}
\DeclareMathOperator{\Str}{Str}
\DeclareMathOperator{\Aut}{Aut}
\newcommand{\fraka}{\mathfrak{a}}
\newcommand{\frakg}{\mathfrak{g}}
\newcommand{\frakh}{\mathfrak{h}}
\newcommand{\frakk}{\mathfrak{k}}
\newcommand{\frakl}{\mathfrak{l}}
\newcommand{\frakm}{\mathfrak{m}}
\newcommand{\frakn}{\mathfrak{n}}
\newcommand{\frakp}{\mathfrak{p}}
\newcommand{\frakq}{\mathfrak{q}}
\newcommand{\CC}{\mathbb{C}}
\newcommand{\NN}{\mathbb{N}}
\newcommand{\RR}{\mathbb{R}}
\newcommand{\TT}{\mathbb{T}}
\newcommand{\calB}{\mathcal{B}}
\newcommand{\calD}{\mathcal{D}}
\newcommand{\calE}{\mathcal{E}}
\newcommand{\calH}{\mathcal{H}}
\newcommand{\calJ}{\mathcal{J}}
\newcommand{\calL}{\mathcal{L}}
\newcommand{\calO}{\mathcal{O}}
\newcommand{\calU}{\mathcal{U}}
\newcommand{\calV}{\mathcal{V}}
\newcommand{\calW}{\mathcal{W}}
\renewcommand{\1}{\textbf{1}}
\DeclareMathOperator{\Ind}{Ind}
\DeclareMathOperator{\tr}{tr}
\DeclareMathOperator{\ad}{ad}
\DeclareMathOperator{\End}{End}
\DeclareMathOperator{\Hom}{Hom}
\DeclareMathOperator{\id}{id}
\DeclareMathOperator{\const}{const}
\DeclareMathOperator{\diag}{diag}
\DeclareMathOperator{\supp}{supp}
\newcommand{\HS}{{\textup{HS}}}
\newcommand{\ip}[2]{\langle #1,#2 \rangle}
\theoremstyle{plain}
\newtheorem{theorem}{Theorem}[section]
\newtheorem{proposition}[theorem]{Proposition}
\newtheorem{lemma}[theorem]{Lemma}
\newtheorem{corollary}[theorem]{Corollary}
\newtheorem{thmalph}{Theorem}
\theoremstyle{definition}
\newtheorem{remark}[theorem]{Remark}
\numberwithin{equation}{section}
\title[Hol. discrete series in the gen. Whittaker Plancherel formula]{The holomorphic discrete series contribution to the generalized Whittaker Plancherel formula}
\author{Jan Frahm}
\address{Department of Mathematics, Aarhus University, Ny Munkegade 118, 8000 Aarhus C, Denmark}
\email{frahm@math.au.dk}
\author{Gestur \'{O}lafsson}
\address{Department of Mathematics, Louisiana State University, Baton Rouge, LA 70803, USA}
\email{olafsson@math.lsu.edu}
\author{Bent {\O}rsted}
\address{Department of Mathematics, Aarhus University, Ny Munkegade 118, 8000 Aarhus C, Denmark}
\email{orsted@math.au.dk}
\begin{document}

\subjclass[2010]{Primary 22E46; Secondary 43A85.}

\keywords{}
\thanks{Part of the research in this paper was carried out within the online research community on Representation Theory and Noncommutative Geometry sponsored by the American Institute of Mathematics.
The first author was partially supported by a research grant from the Villum Foundation (Grant No. 00025373).
The second author was partially supported by Simons grant 586106.}
\maketitle

\begin{abstract}
For a Hermitian Lie group $G$ of tube type we find the contribution of the holomorphic discrete series to the Plancherel decomposition of the Whittaker space $L^2(G/N,\psi)$, where $N$ is the unipotent radical of the Siegel parabolic subgroup and $\psi$ is a certain non-degenerate unitary character on $N$. The holomorphic discrete series embeddings are constructed in terms of generalized Whittaker vectors for which we find explicit formulas in the bounded domain realization, the tube domain realization and the $L^2$-model of the holomorphic discrete series. Although $L^2(G/N,\psi)$ does not have finite multiplicities in general, the holomorphic discrete series contribution does.\\
Moreover, we obtain an explicit formula for the formal dimensions of the holomorphic discrete series embeddings, and we interpret the holomorphic discrete series contribution to $L^2(G/N,\psi)$ as boundary values of holomorphic functions on a domain $\Xi$ in a complexification $G_\CC$ of $G$ forming a Hardy type space $\calH_2(\Xi,\psi)$.
\end{abstract}


\section*{Introduction}

One of the fundamental problems in representation theory is the decomposition of induced representations into irreducibles. In the context of real reductive groups, unitarily induced representations always decompose uniquely into a direct integral of irreducible unitary representations. A special case of this, inducing from the trivial representation, is the decomposition of $L^2(G/H)$, where $G$ is real reductive and $H\subseteq G$ a closed unimodular subgroup. This decomposition is also referred to as the \emph{Plancherel formula for $G/H$}, and has been studied intensively in particular for reductive subgroups $H\subseteq G$, e.g. for the group case, reductive symmetric spaces or, more recently, real spherical spaces. Somewhat antipodal to reductive subgroups are unipotent subgroups $N\subseteq G$. In this case, the theory becomes richer if we allow to induce from a generic unitary character $\psi:N\to\TT$ and denote $L^2(G/N,\psi)=\Ind_N^G(\psi)$. For a maximal unipotent subgroup, the Plancherel formula for $L^2(G/N,\psi)$, also called \emph{Whittaker Plancherel formula}, was obtained by Harish-Chandra, Wallach and van den Ban. The Whittaker Plancherel formula is formulated in terms of Whittaker vectors which are distribution vectors equivariant under the action of $N$. Whittaker vectors also play an important role in analytic number theory, where they are used to expand automorphic forms into generalized Fourier series. In particular, Whittaker vectors on holomorphic discrete series representations are important in the study of holomorphic automorphic forms on Hermitian groups.

\subsection*{Explicit generalized Whittaker vectors}

In this paper, we study \emph{generalized Whittaker vectors} on holomorphic discrete series, i.e. Whittaker vectors for non-maximal unipotent subgroups $N$. More precisely, we focus on the case where $G$ is a Hermitian Lie group of tube type and $N$ the unipotent radical of the Siegel parabolic subgroup $P=MAN$. In this case, $N$ is abelian, and we choose a generic character $\psi$ on $N$ which is contained in a certain symmetric cone $\Omega$ in the unitary dual of $N$. For a holomorphic discrete series representation $(\sigma,\calH_\sigma)$ of $G$, a \emph{generalized Whittaker vector} for the pair $(N,\psi)$ is a distribution vector $W\in\calH_\sigma^{-\infty}$ such that
$$ W(\sigma(n)v) = \psi(n)W(v) \qquad \mbox{for all }v\in\calH_\sigma^\infty,n\in N. $$

Our first main result is an explicit construction and classification of generalized Whittaker vectors for all holomorphic discrete series representations in three different realizations (bounded domain model, tube domain model and $L^2$-model).

\begin{thmalph}[see Propositions~\ref{prop:WhittakerVectorsL2}, \ref{prop:WhittakerVectorsTOmega} and \ref{prop:WhittakerVectorsD}]\label{thm:MainThmIntroA}
	\begin{enumerate}
		\item The dimension of the space of generalized Whittaker vectors on a holomorphic discrete series representation is equal to the dimension of its lowest $K$-type.
		\item All generalized Whittaker vectors on holomorphic discrete series representations are given by the following explicit formulas: \eqref{eq:WhittakerNL2} and \eqref{eq:WhittakerNbarL2} for the $L^2$-model, \eqref{eq:WhittakerTOmega} for the tube domain realization, and \eqref{eq:WhittakerND} and \eqref{eq:WhittakerNbarD} for the bounded domain realization.
	\end{enumerate}
\end{thmalph}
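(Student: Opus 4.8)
The plan is to reduce the classification of generalized Whittaker vectors to a concrete eigenvalue problem in the most convenient of the three models and then transport the answer to the other two. Concretely, a distribution vector $W\in\calH_\sigma^{-\infty}$ satisfying $W(\sigma(n)v)=\psi(n)W(v)$ is, after passing to infinitesimal characters, the same as a vector in the $(\frakn,d\psi)$-coinvariants of the underlying Harish-Chandra module, so by duality the space of such $W$ is the (algebraic) dual of the twisted Jacquet-type module $(\calH_\sigma^\infty)_{\frakn,\psi}$, or rather a suitable topological version of it. Since $N$ is abelian and $\psi$ lies in the open symmetric cone inside $\widehat N\cong\frakn^*$ singled out in the setup, the stabilizer of $\psi$ in $M$ is a reductive subgroup and the relevant ``Whittaker induction'' is governed by a finite-dimensional representation of that stabilizer; the heuristic (Kostant, Lynch, and in the holomorphic setting the work on the $L^2$-model) is that this module is exactly the lowest $K$-type of $\sigma$ viewed through the Siegel parabolic. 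Thus part (1) should follow once we identify $(\calH_\sigma^\infty)_{\frakn,\psi}$ with (the dual of) the lowest $K$-type.

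The key steps, in order: \textbf{(i)} Fix the $L^2$-model of $\sigma$ on $L^2(\Omega,d\mu_\nu)\otimes V_\tau$ (functions on the symmetric cone $\Omega$ valued in the lowest $K$-type $V_\tau$, with the measure depending on the parameter). In this model $N$ acts by the unitary characters $e^{i\ip{\blank}{x}}$ after a Laplace/Fourier transform, so the equation $W(\sigma(n)v)=\psi(n)W(v)$ becomes a multiplier equation: $W$ must be supported, in the Fourier-dual variable, on the single coadjoint point $\psi\in\Omega$. \textbf{(ii)} Evaluate the resulting ``delta distribution on the $N$-orbit through $\psi$'' against the reproducing structure of the $L^2$-model; this forces $W$ to be given by pairing $v$ against a fixed vector in $V_\tau$ (the value of the $L^2$-model section at the point $\psi$), which gives the explicit formulas \eqref{eq:WhittakerNL2} and \eqref{eq:WhittakerNbarL2} and simultaneously shows the solution space has dimension $\dim V_\tau=\dim(\text{lowest }K\text{-type})$. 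The two formulas correspond to using either $N$ or the opposite unipotent $\overline N$, i.e. the two natural Lagrangian-type pictures. \textbf{(iii)} Transport to the tube domain model via the Laplace transform intertwiner: the Whittaker vector becomes integration of the holomorphic function $F$ on the tube $T_\Omega=V+i\Omega$ against $e^{i\ip{\psi}{z}}$ (a boundary-value / Hardy-space pairing), yielding \eqref{eq:WhittakerTOmega}, with convergence controlled by the $L^2$-estimates already established for the model. \textbf{(iv)} Transport further to the bounded domain via the Cayley transform, tracking the automorphy factor; this is a change of variables and produces \eqref{eq:WhittakerND} and \eqref{eq:WhittakerNbarD}.

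The main obstacle will be step (ii): showing that the formal ``restriction to the orbit of $\psi$'' actually defines a \emph{continuous} functional on $\calH_\sigma^\infty$ and that \emph{every} generalized Whittaker vector arises this way (no solutions supported ``at infinity'' on $\Omega$ or involving derivatives transverse to the orbit). For the existence/continuity half, one needs Sobolev-type bounds on point evaluations of $L^2$-model sections near the distinguished point $\psi$, using that $\psi$ lies in the \emph{open} cone so the orbit is maximal-dimensional and the relevant density is smooth and nonvanishing there. For the exhaustion half, the cleanest route is to analyze the system of differential equations cut out by $d\psi$ on $\frakn$ together with the $\frak{g}$-action: the $\frakn$-equations localize the (Fourier-transformed) distribution to the affine subspace $\{\psi\}$, and then the remaining $\frak{g}$-equivariance — in particular the action of the opposite nilradical $\overline{\frakn}$, which in the Fourier picture becomes a second-order differential operator — eliminates all transverse derivatives, leaving only the $V_\tau$-valued constant. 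This is where the tube-type hypothesis and the explicit structure of the holomorphic discrete series' $K$-spectrum enter decisively; I expect the bookkeeping here, rather than any conceptual difficulty, to be the bulk of the work.
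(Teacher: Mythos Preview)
Your overall architecture matches the paper's: classify Whittaker vectors in the $L^2$-model where $N$ acts by multiplication by characters, then transport via the Laplace transform to the tube domain and via the Cayley transform to the bounded domain. The continuity of point evaluation (the existence half of your step (ii)) is handled in the paper exactly as you suggest, by a Sobolev-type embedding $L^2_\pi(\Omega)^\infty\subseteq C^\infty(\Omega)\otimes\calV_\pi$ coming from the $\frakl$-action, and the transports in steps (iii) and (iv) are precisely what the paper does.

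The gap is in your exhaustion argument. You propose to use ``the remaining $\frakg$-equivariance --- in particular the action of $\overline{\frakn}$'' to eliminate derivatives of $\delta_\psi$. But a generalized Whittaker vector for $(N,\psi)$ carries \emph{no} equivariance condition for $\overline{\frakn}$ or for $\frakg$ at large: it is only a continuous functional on smooth vectors satisfying the single equation $W(R_\pi(n_u)f)=\psi_v(n_u)W(f)$. There is nothing further to invoke, so your proposed mechanism is unavailable. The paper's argument is both simpler and self-contained: the Whittaker equation is equivalent to $\bigl(e^{-i(u|\cdot)}-e^{-i(u|v)}\bigr)\cdot T=0$ in $\calD'(\Omega)\otimes\calV_\pi^\vee$ for every $u\in V$, and differentiating this in $u$ at $u=0$ gives $(x_j-v_j)\cdot T=0$ for all coordinates $j$. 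For a distribution supported at $\{v\}$ this forces order zero, i.e.\ $T=\delta_v\otimes\eta$. In other words, the $N$-equation read as a full functional identity (not merely a support condition) already kills the transverse derivatives; you do not need, and cannot use, $\overline{\frakn}$. The $\overline{N}$-Whittaker vectors are then obtained not by a parallel analysis but simply by composing with $R_\pi(j)$, using $j\overline{N}j^{-1}=N$ and the explicit Bessel-kernel formula \eqref{eq:L2ActionJ} for $R_\pi(j)$.
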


Particularly interesting are the generalized Whittaker vectors with respect to the unipotent subgroup $\overline{N}$ opposite to $N$ in the $L^2$-model, which are essentially given by a vector-valued multivariable $J$-Bessel function on a symmetric cone $\Omega$ in $\overline{N}$. The study of scalar-valued multivariable Bessel functions goes back to the work of Herz~\cite{Her55} for the case where $\Omega$ is the set of positive definite symmetric matrices. It was generalized to arbitrary symmetric cones by Faraut--Koranyi~\cite{FK94} and Dib~\cite{Dib90}, and to the vector-valued case by Ding--Gross~\cite{DG93}. More recently, the scalar-valued $J$-Bessel function on symmetric matrices has been used in the context of weak Maass forms and weakly holomorphic modular forms for $G=\Sp(n,\RR)$, see e.g. Bruinier--Funke--Kudla~\cite{BFK18} and references therein. We believe that our vector-valued generalizations could help to extend their theory to more general general groups and representations.

We remark that the dimension of the space of generalized Whittaker vectors on holomorphic discrete series representations was already determined by Wallach~\cite{Wal03}. However, he did not obtain explicit formulas for the Whittaker vectors.

\subsection*{Discrete spectrum in the generalized Whittaker Plancherel formula}

For every holomorphic discrete series representation $(\sigma,\calH_\sigma)$, matrix coefficients with respect to generalized Whittaker vectors construct embeddings of the smooth vectors $\calH_\sigma^\infty$ into $C^\infty(G/N,\psi)$, the smooth sections of the line bundle induced by $\psi$. More precisely, if $\calH_\sigma^{-\infty,\psi}$ denotes the space of Whittaker vectors on $\sigma$ for the pair $(N,\psi)$, we have a $G$-equivariant embedding
\begin{equation}
	\calH_\sigma^\infty\otimes\calH_\sigma^{-\infty,\psi}\hookrightarrow C^\infty(G/N,\psi), \quad f\otimes W\mapsto\big[g\mapsto W(\sigma(g)^{-1}f)\big].\label{eq:IntroGenMatrixCoeff}
\end{equation}
We use our explicit formulas for Whittaker vectors to show that all such embeddings actually map into $L^2(G/N,\psi)$, thus constructing discrete components in the Plancherel formula for $L^2(G/N,\psi)$.

\begin{thmalph}[see Theorem~\ref{thm:HolDSContribution}, Theorem~\ref{thm:FormalDimension} and Corollary~\ref{cor:UnitHWContribution}]\label{thm:MainThmIntroB}
	For every holomorphic discrete series representation $(\sigma,\calH_\sigma)$ of $G$, the map \eqref{eq:IntroGenMatrixCoeff} extends to a $G$-equivariant embedding
	$$ \calH_\sigma\otimes\calH_\sigma^{-\infty,\psi}\hookrightarrow L^2(G/N,\psi), $$
	constructing the complete contribution of $\sigma$ to the Plancherel formula for $L^2(G/N,\psi)$. Moreover, there exists an explicit constant $d(\sigma)>0$, the \emph{formal dimension}, and an explicit inner product $\langle\cdot,\cdot\rangle_{\calH_\sigma^{-\infty,\psi}}$ on $\calH_\sigma^{-\infty,\psi}$ (see \eqref{eq:DefInnerProductOnWhittakerVectors} and \eqref{eq:FormulaFormalDimension} for the explicit formulas) such that
	$$ \int_{G/N}W\big(\sigma(g)^{-1}f\big)\overline{W'\big(\sigma(g)^{-1}f'\big)}\,d\dot{g} = \frac{1}{d(\sigma)}\langle f,f'\rangle_{\calH_\sigma}\langle W,W'\rangle_{\calH_\sigma^{-\infty,\psi}} $$
	for all $f,f'\in\calH_\sigma$ and $W,W'\in\calH_\sigma^{-\infty,\psi}$.
\end{thmalph}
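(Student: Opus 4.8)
The plan is to leverage the explicit formulas for generalized Whittaker vectors from Theorem~\ref{thm:MainThmIntroA} together with standard square-integrability estimates for holomorphic discrete series matrix coefficients. First I would fix a holomorphic discrete series representation $(\sigma,\calH_\sigma)$ and work in whichever of the three models makes the estimates cleanest --- I expect the $L^2$-model to be the right choice, since there the Whittaker vector for $\overline{N}$ is given by an explicit vector-valued $J$-Bessel function on the symmetric cone $\Omega$, and the $N$-action is particularly transparent. Using the Iwasawa-type or $\overline{N}MAN$-type coordinates on $G/N$ adapted to the Siegel parabolic, I would write the integral $\int_{G/N}W(\sigma(g)^{-1}f)\overline{W'(\sigma(g)^{-1}f')}\,d\dot g$ as an iterated integral over $\overline{N}$ (or its relevant quotient) and over $MA$, and reduce the $\overline{N}$-integration to an application of the Euclidean Plancherel/Fourier theorem on the abelian group $N\cong\overline N$, exploiting the $\psi$-equivariance to kill one set of variables.

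The key steps, in order, are: (1) choose a convenient decomposition of $G/N$ and the corresponding invariant measure $d\dot g$, being careful about the modular function and the normalization hidden in the eventual formula for $d(\sigma)$; (2) substitute the explicit Whittaker vector formulas \eqref{eq:WhittakerNL2}/\eqref{eq:WhittakerNbarL2} and expand $W(\sigma(g)^{-1}f)$ as an integral pairing of $f$ against a Bessel kernel; (3) perform the Fourier-theoretic reduction over the unipotent direction, which collapses the double integral over two copies of $f$ into a single inner product $\langle f,f'\rangle_{\calH_\sigma}$ times a remaining integral over $MA$ (or over $\Omega$) involving only the Bessel functions and the parameters of $W,W'$; (4) identify this remaining integral as a bilinear form in $W,W'$ --- this is precisely the definition of $\langle\cdot,\cdot\rangle_{\calH_\sigma^{-\infty,\psi}}$ in \eqref{eq:DefInnerProductOnWhittakerVectors} --- and extract the scalar normalization $1/d(\sigma)$, matching it against \eqref{eq:FormulaFormalDimension}; (5) deduce from finiteness of this integral (convergence of the relevant Bessel integral over $\Omega$, which holds in the discrete series range of the lowest $K$-type parameter) that the map on $\calH_\sigma^\infty$ is bounded into $L^2(G/N,\psi)$ and hence extends continuously to $\calH_\sigma$; (6) check that the image exhausts the full $\sigma$-isotypic contribution to the Plancherel decomposition, e.g. by a Schur-type argument or by invoking that the generalized matrix coefficient map is an isometry (up to the stated constant) onto a closed $G$-invariant subspace whose commutant is exactly $\End(\calH_\sigma^{-\infty,\psi})$, together with the already-known multiplicity statement.

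The main obstacle I anticipate is Step~(3) combined with Step~(5): justifying the interchange of integrals and the application of the Euclidean Plancherel theorem requires controlling the growth of the Bessel kernel and of smooth vectors $f$ near the boundary of $\Omega$ and in the $A$-direction, so that everything is absolutely convergent; equivalently, one must verify that the Bessel integral defining $\langle W,W'\rangle_{\calH_\sigma^{-\infty,\psi}}$ actually converges, which is where the tube-type hypothesis and the precise discrete series inequality on the scalar parameter enter. A secondary subtlety is bookkeeping the various measure normalizations so that the constant coming out of the computation genuinely equals the claimed $d(\sigma)$; I would pin this down by testing the identity on a single convenient pair $f=f'$ a lowest-$K$-type vector and $W=W'$, reducing $d(\sigma)$ to a ratio of explicit Gamma-factors on the cone. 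The surjectivity onto the full $\sigma$-contribution in Step~(6) is then essentially formal, relying on the fact that holomorphic discrete series are isolated in the unitary dual and that the generalized matrix coefficients separate them, but I would still state it carefully as it is the part that upgrades ``an embedding'' to ``the complete contribution.''
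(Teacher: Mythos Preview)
Your outline takes a different route from the paper's, and Step~(3) as written has a gap. You propose $\overline{N}MAN$-coordinates on $G/N$ and hope to handle the $\overline{N}$-integral by a Euclidean Fourier argument, ``exploiting the $\psi$-equivariance to kill one set of variables.'' But the $\psi$-equivariance is with respect to $N$ acting on the right, and that has already been absorbed by passing to $G/N$; there is nothing left to exploit. In the $L^2$-model the action of $\overline N$ is not by characters but through the $J$-Bessel kernel (see \eqref{eq:L2ActionJ} and the relation $\overline{n}_u=jn_uj^{-1}$), so the $\overline N$-integral does not collapse via Plancherel in any obvious way, and you are pushed straight into the hard Bessel estimates you yourself flag as the main obstacle.

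The paper avoids all of this by a different choice of coordinates and by invoking irreducibility \emph{before} any integration. It uses the decomposition $G=KLN$, so that the invariant measure on $G/N$ becomes $l^{2\rho}\,dl\,dk$. By Frobenius reciprocity every embedding of $\calH_\sigma^\infty$ into $C^\infty(G/N,\psi)$ comes from a Whittaker vector, so to get an $L^2$-embedding it suffices to show that a \emph{single} nonzero vector has square-integrable image; irreducibility then propagates boundedness to all of $\calH_\sigma$. The paper takes $f=f_\xi$ in the lowest $K$-type, where $R_\pi(k)^{-1}f_\xi=f_{\pi_c(k)^{-1}\xi}$ by Lemma~\ref{lem:ActionLKT} and the $L$-action is the linear one \eqref{eq:L2ActionL}. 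The $K$-integral is then handled by Schur orthogonality for the finite-dimensional representation $\pi_c$, and the remaining $L$-integral becomes a Gamma-type integral over $\Omega$ which converges precisely when $\omega(\pi)>\tfrac{2n}{r}-1$. The formal-dimension identity is obtained by the same device: irreducibility gives a constant $d(\pi,\eta)$ a priori, and one computes it by evaluating both sides on a lowest $K$-type vector (Lemma~\ref{lem:NormLowestKType} and the proof of Theorem~\ref{thm:FormalDimension}), with the inner product \eqref{eq:DefInnerProductOnWhittakerVectors} on Whittaker vectors emerging from $\widetilde{\Gamma}_\pi$ rather than from a Bessel integral. No Bessel asymptotics enter anywhere. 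Your instinct to test on a lowest $K$-type vector is right; the point is to make that the \emph{whole} argument, not just a final normalization check.
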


We remark that, unless $G$ is locally isomorphic to $\SL(2,\RR)$, the decomposition of $L^2(G/N,\psi)$ does not have finite multiplicities for all representations occurring. However, Theorem~\ref{thm:MainThmIntroB} implies that all holomorphic discrete series representations occur with finite multiplicity inside $L^2(G/N,\psi)$. This is in line with recent results of Aizenbud--Gourevitch~\cite{AG21} who introduced the concept of \emph{$\calO$-spherical homogeneous spaces} for nilpotent coadjoint orbits $\calO$. Using \cite[Theorem B]{AG21}, it can be shown that $G/N$ is indeed $\calO_P$-spherical, where $\calO_P$ is the Richardson orbit associated with the maximal parabolic subgroup $P$. Then, \cite[Theorem D]{AG21} implies that all representations with associated variety equal to $\overline{\calO_P}$, such as the holomorphic discrete series, occur with at most finite multiplicity in $L^2(G/N)$. It is likely that a variant of their result also holds for $L^2(G/N,\psi)$.

We further remark that the representation $L^2(G/N)$ is \emph{tempered} in the sense that the support of the Plancherel measure is contained in the set of tempered representations. This follows from recent results of Benoist--Kobayashi~\cite[Theorem 2.9 and Corollary 3.3]{BK17}. By \cite[Lemma 3.2]{BK17}, this also implies that $L^2(G/N,\psi)$ is tempered, so it is natural to look for discrete series in $L^2(G/N,\psi)$.

\subsection*{Lowest $K$-types in the Whittaker space}

The problem of constructing irreducible subrepresentations of $L^2(G/N,\psi)$ was studied earlier by among others Yamashita (see \cite{Y88} and references therein) from a different point of view under the rubric of generalized Gelfand--Graev representations. He obtained the first part of our Theorem~\ref{thm:MainThmIntroB} solving differential equations on $G/N$ and finding highest weight vectors corresponding to holomorphic discrete series representations. We find new formulas for these highest weight vectors on $G/N$ by different methods. More precisely, we show that every holomorphic discrete series embedding $T:\calH_\sigma\hookrightarrow L^2(G/N,\psi)$ can be described in terms of the lowest $K$-type $(\pi,\calV_\pi)$ of $(\sigma,\calH_\sigma)$, the character $\psi$, and the unique decomposition of an element $g\in G$ into a product
$$ g = p^+(g)k_\CC(g)n_\CC(g), $$
where $k_\CC(g)\in K_\CC$, a complexification of the maximal compact subgroup $K$ of $G$, $p^+(g)\in P^+=\exp\frakp^+$, where $\frakp^+$ is one of the two irreducible components of the Cartan complement $\frakp_\CC$ of $\frakk_\CC$, and $n_\CC(g)\in N_\CC$, a complexification of $N$.

\begin{thmalph}[see Theorem~\ref{thm:LKTonG/N}]\label{thm:MainThmIntroC}
	Every lowest $K$-type $(\pi,\calV_\pi)$ of a holomorphic discrete series representation occurring inside $L^2(G/N,\psi)$ is given by
	\begin{multline*}
		T_{\pi,\eta}:\calV_\pi \hookrightarrow L^2(G/N,\psi),\\
		T_{\pi,\eta}\xi(g) = \psi(n_\CC(g)^{-1})\eta\big(\pi(k_\CC(g)^{-1})\xi\big) \qquad (\xi\in\calV_\pi,g\in G)
	\end{multline*}
	for some $\eta\in\calV_\pi^\vee$.
\end{thmalph}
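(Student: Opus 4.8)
The plan is to combine the construction of holomorphic discrete series embeddings in terms of generalized Whittaker vectors with two structural properties of the lowest $K$-type that are visible in the bounded domain model: it is annihilated by $\frakp^+$, and $K_\CC$ acts on it through $\pi$. By Theorem~\ref{thm:MainThmIntroB} together with Schur's lemma, the full contribution of $\sigma$ to $L^2(G/N,\psi)$ is the image of $\calH_\sigma\otimes\calH_\sigma^{-\infty,\psi}$ under the extension of \eqref{eq:IntroGenMatrixCoeff}, so every $G$-equivariant embedding $T:\calH_\sigma\hookrightarrow L^2(G/N,\psi)$ has the form $Tf=\big[g\mapsto W(\sigma(g)^{-1}f)\big]$ for some nonzero $W\in\calH_\sigma^{-\infty,\psi}$. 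Restricting $W$ to the lowest $K$-type $\calV_\pi\subseteq\calH_\sigma^\infty$ gives a functional $\eta:=W|_{\calV_\pi}\in\calV_\pi^\vee$, and $\eta\neq0$ since $T$ is injective and $\pi$ is irreducible.

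Next, for $\xi\in\calV_\pi$ I compute $T\xi(g)=W(\sigma(g)^{-1}\xi)$ using $g=p^+(g)k_\CC(g)n_\CC(g)$, so that formally
$$ \sigma(g)^{-1}\xi=\sigma(n_\CC(g)^{-1})\,\sigma(k_\CC(g)^{-1})\,\sigma(p^+(g)^{-1})\xi, $$
working in the bounded domain realization where $P^+$, $K_\CC$ and $N_\CC$ act by the holomorphic continuation of the $G$-action to a dense open subset of $G_\CC$. Two observations collapse the right-hand side: (i) the lowest $K$-type is annihilated by $\frakp^+$ --- in this model $\calV_\pi$ sits as the constant $\calV_\pi$-valued functions and $\frakp^+$ acts by lowering operators --- so $\sigma(p^+(g)^{-1})\xi=\xi$; and (ii) $\sigma|_{K_\CC}$ preserves $\calV_\pi$ and acts there by $\pi$, so $\sigma(k_\CC(g)^{-1})\xi=\pi(k_\CC(g)^{-1})\xi\in\calV_\pi$. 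Hence $\sigma(g)^{-1}\xi=\sigma(n_\CC(g)^{-1})\,\pi(k_\CC(g)^{-1})\xi$, and applying $W$ together with the Whittaker relation $W\circ\sigma(n)=\psi(n)W$ extended from $n\in N$ to $n\in N_\CC$ gives
$$ T\xi(g)=W\big(\sigma(n_\CC(g)^{-1})\,\pi(k_\CC(g)^{-1})\xi\big)=\psi(n_\CC(g)^{-1})\,W\big(\pi(k_\CC(g)^{-1})\xi\big)=\psi(n_\CC(g)^{-1})\,\eta\big(\pi(k_\CC(g)^{-1})\xi\big), $$
which is the asserted formula. This holds first on the dense open set where the decomposition $g=p^+(g)k_\CC(g)n_\CC(g)$ is defined, and then on all of $G$ since both sides are real-analytic.

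The main obstacle is to justify the two ``complexified'' steps rigorously: that $\sigma(p^+(g)^{-1})$ and $\sigma(n_\CC(g)^{-1})$ act on $\xi$ and on $\pi(k_\CC(g)^{-1})\xi$ as claimed, even though $P^+$ and $N_\CC$ do not act on $\calH_\sigma$ globally, and that $W$ still satisfies the Whittaker relation against these complex group elements. This is handled in the bounded (or tube) domain model, where the $G$-action is given by a fractional-linear action on the domain and an automorphy cocycle that both extend holomorphically to a neighbourhood of the domain in $G_\CC$; thus $\sigma(p^+(g)^{-1})\xi$ and $\sigma(n_\CC(g)^{-1})\big(\pi(k_\CC(g)^{-1})\xi\big)$ are genuine holomorphic $\calV_\pi$-valued functions, and one checks that the explicit generalized Whittaker vector of Theorem~\ref{thm:MainThmIntroA} --- an integral against a vector-valued Bessel-type kernel --- pairs with them so as to produce exactly the factor $\psi(n_\CC(g)^{-1})$. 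Alternatively one can avoid complexifying $\sigma$ altogether: verify directly that $g\mapsto\psi(n_\CC(g)^{-1})\eta(\pi(k_\CC(g)^{-1})\xi)$ defines a $K$-equivariant map $\calV_\pi\to C^\infty(G/N,\psi)$ whose image is $\frakp^+$-annihilated, and then use that inside $L^2(G/N,\psi)$ the $\frakp^+$-annihilated vectors are precisely the lowest $K$-types of holomorphic discrete series (Theorem~\ref{thm:MainThmIntroB}) to identify it with $T|_{\calV_\pi}$ up to the scalar absorbed into $\eta$.
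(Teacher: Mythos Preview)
Your alternative at the end --- verify directly that $g\mapsto\psi(n_\CC(g)^{-1})\eta(\pi(k_\CC(g)^{-1})\xi)$ lies in $C^\infty(G/N,\psi)$, is $K$-equivariant via $\pi$, is annihilated by $\frakp^+$ (because the projections $k_\CC$ and $n_\CC$ are left $P^+$-invariant), and is square-integrable by $\SU(1,1)$-reduction --- is correct and is precisely the alternative argument the paper records in the Remark following Theorem~\ref{thm:LKTonG/N}.

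The paper's \emph{main} proof, however, takes a different route from your main line. Rather than factoring $\sigma(g)^{-1}\xi$ through $P^+K_\CC N_\CC$, it works with the adjoint $T^*:L^2(G/N,\psi)\to\calH_\pi^2$ and writes it as an integral operator with kernel $\Psi(g,z)$ built from the reproducing kernel $K_z$. The function $\widetilde\Phi(g)=\Psi(g,eK)$ then satisfies $\widetilde\Phi(kg)=\pi_c(k)\widetilde\Phi(g)$, $\widetilde\Phi(gn)=\psi(n)\widetilde\Phi(g)$, and $\widetilde\Phi(p^-g)=\widetilde\Phi(g)$ for $p^-\in P^-$, the last because $P^-$ fixes the base point $eK$ with trivial cocycle there. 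These identities extend holomorphically to the semigroup $S$, and combined with $S\subseteq P^-K_\CC N_\CC$ (the conjugate of $\Xi\subseteq P^+K_\CC N_\CC$) they determine $\widetilde\Phi$ from the single vector $\widetilde\Phi(e)\in\calV_\pi$. The lowest $K$-type formula then follows from $T(K_e\xi)(g)=\langle\xi,\Psi(g,e)\rangle$.

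The practical difference is that the paper never has to make sense of $\sigma(p^+(g)^{-1})$ or $\sigma(n_\CC(g)^{-1})$ as operators: neither $P^+$ nor $N_\CC$ lies in $S$ or $\Xi$, so there is no ready-made holomorphic extension of the representation to them, and the Whittaker relation $W\circ\sigma(n)=\psi(n)W$ is only available for $n\in N$. Your main line asserts that the explicit Whittaker kernel ``pairs with them so as to produce exactly the factor $\psi(n_\CC(g)^{-1})$'' but does not actually perform that computation, so as written it is incomplete; your alternative line closes the gap.
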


This formula is in the spirit of \cite{HOO91,OO88,OO91}, where an anologous decomposition $G\subseteq P^+K_\CC H_\CC$ was derived for an affine symmetric space $G/H$ and similar formulas for the lowest $K$-types of holomorphic discrete series in $L^2(G/H)$ were obtained.

\subsection*{The Hardy space}

In the context of affine symmetric spaces, the unitary highest weight contribution to $L^2(G/H)$ can be described in terms of a natural \emph{Hardy space} $\calH_2(\Xi_H)$ on $G/H$. This is a space of holomorphic functions on a domain $\Xi_H$ in a complexification $G_\CC/H_\CC$ of $G/H$, containing $G/H$ in the boundary, together with a natural boundary value map $\beta:\calH_2(\Xi_H)\to L^2(G/H)$ which is an isometry onto the unitary highest weight contribution. One of the key players in this theory is the contraction semigroup $S\subseteq G_\CC$ which contains $G$ in its boundary and to which all unitary highest weight representations extend as holomorphic $^*$-representations.

In our setting, there is an analogous Hardy space $\calH_2(\Xi,\psi)$ of holomorphic functions on a domain $\Xi\subseteq G_\CC$, transforming by $\psi$ under the right action of $N$, and a corresponding boundary value map $\beta:\calH_2(\Xi,\psi)\to L^2(G/N,\psi)$. The semigroup $S$ and in particular the group $G$ act on $\calH_2(\Xi,\psi)$ by the left-regular representation.

\begin{thmalph}[see Theorem~\ref{thm:HardyDec}]\label{thm:MainThmIntroD}
	\begin{enumerate}
		\item The Hardy space $\calH_2(\Xi,\psi)$ decomposes discretely into the direct sum of holomorphic discrete series representations, each occurring with multiplicity equal to the dimension of its lowest $K$-type.
		\item The boundary value map $\beta:\calH_2(\Xi,\psi)\to L^2(G/N,\psi)$ is a $G$-equivariant isometry onto the holomorphic discrete series contribution to the Plancherel formula for $L^2(G/N,\psi)$.
	\end{enumerate}
\end{thmalph}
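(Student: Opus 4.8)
The plan is to build the Hardy space $\calH_2(\Xi,\psi)$ so that its decomposition in part (1) is essentially forced by the corresponding decomposition of the $L^2$-model (or tube domain model) of each holomorphic discrete series, and then to identify the boundary value map $\beta$ with the family of generalized Whittaker embeddings from Theorem~\ref{thm:MainThmIntroB}. First I would set up the domain: $\Xi\subseteq G_\CC$ should be the $G$-orbit (under left translation) of a suitable "tube over a cone" inside the complexified $N_\CC P^+ K_\CC$-decomposition, engineered so that every holomorphic discrete series representation $(\sigma,\calH_\sigma)$ extends to a holomorphic $^*$-representation of the contraction semigroup $S$ with $G$ in its boundary, and so that right-$N$-translation acts on functions on $\Xi$ by a genuine character extending $\psi$ (this uses that $N$ is abelian and that $\psi$ sits in the symmetric cone in $\widehat{N}$, so $\psi$ has a holomorphic extension to $N_\CC$ that is bounded on the relevant tube). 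Concretely, a holomorphic function $F$ on $\Xi$ transforming by $\psi$ under right-$N$ translation is the same data as a holomorphic function on $\Xi/N_\CC$, and using $g=p^+(g)k_\CC(g)n_\CC(g)$ this quotient is a tube-type domain in $P^+K_\CC/K_\CC \cong$ the bounded symmetric domain, twisted by the $K_\CC$-cocycle; the norm on $\calH_2(\Xi,\psi)$ is the $L^2$-norm over the $G$-orbit direction with respect to the natural invariant measure.

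Next I would prove part (1). On the Hardy space side, the left-regular action of $S$ (hence $G$) is a direct integral/sum of holomorphic $^*$-representations; by general principles (as in the affine symmetric space theory of \cite{HOO91,OO88,OO91}, or by a direct Fourier-type expansion in the fibre variable of $\Xi \to \Xi/N_\CC$ combined with the cone condition on $\psi$) only those holomorphic discrete series representations whose lowest weight is positive enough to make the relevant Bessel integral converge can appear, and each appears with a multiplicity space naturally identified with the Whittaker vector space $\calH_\sigma^{-\infty,\psi}$. By Theorem~\ref{thm:MainThmIntroA}(1), $\dim\calH_\sigma^{-\infty,\psi}=\dim\calV_\pi$, giving the claimed multiplicity. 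The cleanest way to see this is to show that evaluation of $F\in\calH_2(\Xi,\psi)$ at the "base point" of the tube, composed with the $K_\CC$-action, recovers the lowest-$K$-type embedding $T_{\pi,\eta}$ of Theorem~\ref{thm:MainThmIntroC}, so the $\calH_\sigma$-isotypic component of $\calH_2(\Xi,\psi)$ is precisely $\calH_\sigma\otimes\calV_\pi^\vee \cong \calH_\sigma\otimes\calH_\sigma^{-\infty,\psi}$, with the inner product on the multiplicity space being exactly the one $\langle\cdot,\cdot\rangle_{\calH_\sigma^{-\infty,\psi}}$ of Theorem~\ref{thm:MainThmIntroB} up to the formal-dimension constant.

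For part (2), I would define $\beta$ as the boundary value: for $F\in\calH_2(\Xi,\psi)$, restrict to $G\subseteq\partial\Xi$ to get a function on $G$ transforming by $\psi$ under right-$N$ translation, i.e. an element of (a priori) $C^\infty(G/N,\psi)$. Equivariance under the left $G$-action is immediate from the construction. The content is that $\beta$ is a well-defined isometry with the asserted image. Isometry and well-definedness I would get representation by representation: on each $\calH_\sigma$-isotypic piece, $\beta$ intertwines the explicit model of $\calH_2(\Xi,\psi)$ with the explicit generalized matrix coefficient map \eqref{eq:IntroGenMatrixCoeff}, and Theorem~\ref{thm:MainThmIntroB} already tells us that the latter extends to an isometry (up to $d(\sigma)^{-1}$) onto the $\sigma$-contribution of $L^2(G/N,\psi)$; summing over all holomorphic discrete series, the image of $\beta$ is exactly the holomorphic discrete series contribution. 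Surjectivity onto that contribution then follows since every $T_{\pi,\eta}$ arises as $\beta$ of an explicit holomorphic function (the "reproducing kernel section"), which is the converse direction to part (1).

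The main obstacle I expect is the first one: pinning down the correct domain $\Xi$ and the correct Hilbert space norm on $\calH_2(\Xi,\psi)$ so that simultaneously (a) all holomorphic discrete series—and only those—embed, (b) the boundary value map lands in $L^2$ rather than merely in distributions, and (c) the norms match those of Theorem~\ref{thm:MainThmIntroB} on the nose. This requires carefully combining the Gutzmer/Hardy-space estimates controlling growth of holomorphic functions on the tube (ensuring $L^2$-boundary values) with the precise convergence properties of the vector-valued multivariable $J$-Bessel functions that describe the $\overline{N}$-Whittaker vectors in the $L^2$-model; the interplay between the $N_\CC$-direction (where $\psi$ must extend holomorphically and boundedly) and the $K_\CC$-cocycle is the delicate point. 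Everything after $\Xi$ is fixed should reduce to unwinding the explicit formulas of Theorems~\ref{thm:MainThmIntroA}, \ref{thm:MainThmIntroB} and \ref{thm:MainThmIntroC}.
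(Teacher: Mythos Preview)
Your proposal takes a substantially harder road than the paper, and the ``main obstacle'' you anticipate is precisely what the paper's setup eliminates by fiat. In the paper, $\Xi$ is not something to be engineered: it is simply the inverse of the contraction semigroup, $\Xi=\{s\in G_\CC: s\cdot D\subseteq D\}=S^{-1}$, already familiar from the Gelfand--Gindikin/Ol'shanski\u{\i} theory. Likewise, the Hilbert-space norm on $\calH_2(\Xi,\psi)$ is \emph{defined} by $\|F\|=\|\beta(F)\|_{L^2(G/N,\psi)}$, where $\beta(F)=\lim_{S^o\ni s\to e}F_s$; so $\beta$ is an isometry by construction, and your worries about matching norms ``on the nose'' and obtaining $L^2$ rather than distributional boundary values disappear. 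No Gutzmer-type estimates or $J$-Bessel convergence analysis enter.

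The structural input you are missing is the general theorem (Neeb, \cite[Thm.~XI.6.1]{N00}) that a continuous representation of $S$ is a holomorphic $*$-representation if and only if its restriction to $G$ is a (direct sum of) unitary highest weight representation(s). Since the left action of $S$ on $\calH_2(\Xi,\psi)$ is visibly a holomorphic $*$-representation, the Hardy space decomposes discretely into unitary highest weight representations; Corollary~\ref{cor:UnitHWContribution} then says that among these only the holomorphic discrete series can sit inside $L^2(G/N,\psi)$, and Theorem~\ref{thm:HolDSContribution} (together with the extension of $\iota_\pi$ to $\Xi$ via Lemma~\ref{lem:SemigroupMapsIntoSmoothVectors}) shows they all do, with multiplicity $\dim\calV_\pi$. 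The functions $T_{\pi,\eta}(\xi)$ of Theorem~\ref{thm:LKTonG/N} already live in $\calH_2(\Xi,\psi)$ and witness nontriviality. In short: once you accept $\Xi=S^{-1}$ and the boundary-value norm, the proof is a formal combination of Theorem~\ref{thm:HolomExtRep}, Theorem~\ref{thm:HolDSContribution}, and Corollary~\ref{cor:UnitHWContribution}; your Fourier/Bessel route might be made to work but is unnecessary here.
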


In Appendix~\ref{App:Hardy} we recall the main ideas about realizing representations in Hardy spaces and their boundary values for the case of the group $G$ and for affine symmetric spaces $G/H$. It is remarkable that the case of $G/N$ can be treated more or less in parallel, one of the main similarities being the role of the closed semigroup $S$ in the complexification of $G$, which consists of all contractions of $G/K$, viewed in its Harish-Chandra realization as a bounded symmetric domain in a complex flag manifold. Then $S$ acts on all representation spaces as well as on the Hardy space, and the domain $\Xi$ is an orbit of the interior of $S$. We believe that (as we do in Section 4) to see the cases $X = G$, $X=G/H$ and $X=G/N$ in parallel is convenient, making it clear that the methods are quite similar, and probably could be applied in other similar situations.

\subsection*{Perspectives}

In a subsequent paper~\cite{FOO22} we study the case of holomorphic discrete series with one-dimensional lowest $K$-type in more detail. More precisely, we shed new light onto the formula for the generating function of the Laguerre functions, previously used by Kostant~\cite[formulas (5.14 \& (5.15)]{Kos00} for $G=\SL(2,\RR)$, in the connection with Whittaker vectors.

Another natural problem in this context is to understand generalized Whittaker vectors on unitary highest weight representations that do not necessarily belong to the holomorphic discrete series. As shown in this paper, these representations do not contribute to $L^2(G/N,\psi)$, but one might still study Whittaker embeddings into $C^\infty(G/N,\psi)$. The same problem can be studied for more singular characters $\psi$ of $N$.

Generalizing even more, one might ask what happens if one removes the assumption on $G$ to be of tube type. In this case, the Shilov boundary $G/P$ of $G/K$ corresponds to a maximal parabolic subgroup $P$ with possibly non-abelian nilradical $N$. In general, the nilradical $N$ is two-step nilpotent, and Yamashita~\cite{Y88} studied the occurrence of holomorphic discrete series representations in $\Ind_N^G(\psi)$, where $\psi$ is an infinite-dimensional irreducible unitary representation of $N$. In a subsequent work~\cite{FOO24}, we generalize some of the methods developed in this paper to the case of non-tube type groups. Recent work by Bruggeman--Miatello~\cite{BM19,BM21} for the group $G=\SU(2,1)$ indicates that such Whittaker models play a role in the study of non-abelian Poincar\'{e} series.

For the Iwasawa decomposition $G = KAN_0$ we may also consider homogeneous vector bundles over $G/N_0$ with fiber a unitary irreducible representation of $N_0$ and analogous imbeddings of holomorphic discrete series; here induction in stages from our $N$ (say tube type) and a character should yield some results, using Mackey theory for $N_0$.

As for the theory of Hardy spaces, we remark that in addition to the Whittaker case $G/N$ studied in this paper and the case of affine symmetric spaces $G/H$ studied in \cite{HOO91} one may consider a version also on $G/K$, since line bundles for a character of $K$ admit embeddings of (finitely many) holomorphic discrete series by the work of Shimeno~\cite{Shi94}. All such Hardy spaces have a reproducing kernel, and one further problem could be to express it as a sum over the holomorphic discrete series in it.

Finally, we believe that our methods in Section~\ref{sec:NKP}, in particular the construction of the kernel function $\Psi(g,z)$, might generalize from holomorphic discrete series to more general discrete series representations, since every discrete series representation of $G$ has a reproducing kernel when realized as kernel of an elliptic differential operator on $G/K$.

\subsection*{Structure of the paper}

In Section~\ref{sec:HolDS} we set up the notation and recall the three different models for the holomorphic discrete series. Explicit formulas for generalized Whittaker vectors are obtained in Section~\ref{sec:GenWhittakerVectors}, thus proving Theorem~\ref{thm:MainThmIntroA}. These formulas are used in Section~\ref{sec:DiscreteSpectrumL2GN} to show Theorem~\ref{thm:MainThmIntroB}. This also leads to a formula for the formal dimension. In Section~\ref{sec:NKP} we first show that $G\subseteq P^+K_\CC N_\CC$, so every $g\in G$ can be decomposed as $g=p^+(g)k_\CC(g)n_\CC(g)$, and then describe the integral kernel of a holomorphic discrete series embedding, and in particular the embedding of the lowest $K$-types, in terms of the projections $k_\CC(g)$ and $n_\CC(g)$. Finally, in Section~\ref{sec:HardySpace} we define the natural Hardy space $\calH_2(\Xi,\psi)$ and relate it to the holomorphic discrete series contribution of $L^2(G/N,\psi)$.

\section{Holomorphic discrete series representations}\label{sec:HolDS}

In this first section we recall three realizations of holomorphic discrete series representations for Hermitian Lie groups of tube type: the bounded domain realization, the tube domain realization and the $L^2$-model. We use the language of Euclidean Jordan algebras and our notation is a mixture of \cite{DG93} and \cite{FK94}.

\subsection{Simple Jordan algebras}

Let $V$ be a simple Euclidean Jordan algebra of dimension $n=\dim(V)$ with unit element $e$. We write $V^\times\subseteq V$ for the open dense subset of invertible elements. The set of invertible squares
$$ \Omega=\{x^2:x\in V^\times\}\subseteq V $$
is a symmetric cone. We write $T_\Omega=V+i\Omega$ for the corresponding tube domain inside the complexification $V_\CC$ of $V$. Note that $V_\CC$ is a simple complex Jordan algebra.

We write $\tr(z)$ for the Jordan trace of $z\in V_\CC$ and note that it is real for $z\in V$ and strictly positive for $z\in\Omega$. The integer $r=\tr(e)$ is called the \emph{rank of $V$}. The trace form on $V_\CC$ is the bilinear form
$$ (z|w)=\tr(z\cdot w) \qquad (z,w\in V_\CC). $$

Further, let $\Delta(z)$ denote the Jordan determinant of $z\in V_\CC$ which is also real for $z\in V$ and positive for $z\in\Omega$. The polynomial $\Delta$ is of degree $r$ and $\Delta(e)=1$.

For $z\in V_\CC$ we write $L(z):V_\CC\to V_\CC$ for the multiplication by $z$. The quadratic representation of $V_\CC$ is the polynomial map
$$ P:V_\CC\to\End(V_\CC), \quad P(z)=2L(z)^2-L(z^2), $$
and it restricts to a map $V\to\End(V)$. We note that for $z\in V_\CC^\times$, the invertible elements in $V_\CC$, the linear map $P(z)$ is invertible. We further define for $z,w\in V_\CC$ the box operator
$$ z\Box w = L(zw)+[L(z),L(w)]\in\End(V_\CC). $$

\subsection{Associated Lie groups and Lie algebras}\label{sec:AssociatedGroups}

The group $G=\Aut(T_\Omega)$ of biholomorphic automorphisms of $T_\Omega$ is a simple Lie group with trivial center. It acts transitively on $T_\Omega$ and the stabilizer $K$ of the element $ie\in T_\Omega$ is a maximal compact subgroup of $G$, hence $T_\Omega\simeq G/K$ is a Riemannian symmetric space. Denote by $\theta$ the corresponding Cartan involution of $G$ and, abusing notation, also on the Lie algebra $\frakg$ of $G$. We write
$$ \frakg = \frakk\oplus\frakp $$
for the decomposition into $\pm1$ eigenspaces of $\theta$.

Let us define some subgroups of $G$. The subgroup
$$ N=\{n_x:x\in V\} $$
of translations $n_x:T_\Omega\to T_\Omega,\ z\mapsto z+x$ is unipotent and occurs as the unipotent radical of a maximal parabolic subgroup $P=LN$ of $G$. The Levi factor can be chosen as the automorphism group of the cone $\Omega$
$$ L=\Aut(\Omega)=\{g\in\GL(V):g\Omega=\Omega\}. $$
It acts naturally on $T_\Omega$ by linear transformations. The group $G$ is generated by $N$, $L$ and the inversion
$$ j:T_\Omega\to T_\Omega,\,z\mapsto-z^{-1}. $$
The parabolic subgroup opposite to $P$ is given by $\overline{P}=jPj^{-1}=L\overline{N}$ with
$$ \overline{N}=\{\overline{n}_v=jn_vj^{-1}:v\in V\}. $$

On the Lie algebra level, we obtain the Gelfand--Naimark decomposition
\begin{equation}
	\frakg=\frakn\oplus\frakl\oplus\overline{\frakn},\label{eq:GelfandNaimark}
\end{equation}
where $\frakl$ is the Lie algebra of $L$ and $\frakn$ and $\overline{\frakn}$ are the Lie algebras of $N$ and $\overline{N}$ given by
$$ \frakn=\left\{N_u=\left.\frac{d}{dt}\right|_{t=0}n_{tu}:u\in V\right\}, \qquad \overline{\frakn} = \left\{\overline{N}_u=\left.\frac{d}{dt}\right|_{t=0}\overline{n}_{tv}:u\in V\right\}. $$
The Lie algebra $\frakl$ of $L$ is spanned by $u\Box v$, $u,v\in V$. We choose a Jordan frame $\{e_1,\ldots,e_r\}$ for $V$, i.e. a maximal set of pairwise orthogonal primitive idempotents such that $e_1+\cdots+e_r=e$. Denoting by $h_j=2L(e_j)\in\frakl$ the multiplication with $2e_j$, we obtain a maximal abelian subspace
$$ \fraka=\bigoplus_{j=1}^r\RR h_j $$
of $\frakl\cap\frakp$ which is also maximal abelian in $\frakp$. The root system $\Sigma(\frakg,\fraka)$ is of type $C_r$ and we write it as
$$ \left\{\frac{1}{2}(\pm\gamma_j\pm\gamma_k):1\leq j<k\leq r\right\}\cup\{\pm\gamma_j:1\leq j\leq r\} $$
with $\gamma_j(h_k)=2\delta_{jk}$. Then the Lie algebra $\frakn$ consists of the root spaces for $\gamma_j$ ($1\leq j\leq r$) and $\frac{1}{2}(\gamma_j+\gamma_k)$ ($1\leq j<k\leq r$). The root spaces for $\pm\gamma_j$ are one-dimensional and the root spaces for $\frac{1}{2}(\pm\gamma_j\pm\gamma_k)$ have a common dimension $d$. Note that $n=r+r(r-1)\frac{d}{2}$. We choose the positive system $\Sigma^+(\frakg,\fraka)$ induced by the ordering $\gamma_1>\gamma_2>\ldots>\gamma_r>0$. The elements $h_j\in\fraka$ and $e_j\in\frakg_{\gamma_j}$ can be completed to an $\sl(2)$-triple $(h_j,e_j,f_j)$ by $f_j\in\frakg_{-\gamma_j}$. Note that $\{\gamma_1,\ldots,\gamma_r\}$ forms a maximal set of strongly orthogonal positive roots.

The complexified Lie algebras $\frakl_\CC$ and $\frakk_\CC$ are conjugate within the complexification $\frakg_\CC$ of $\frakg$. In particular, they are isomorphic, the isomorphism being induced from the homomorphism
$$ \frakk\to\frakl_\CC, \quad N_u+T+\overline{N}_u \mapsto T+2iL(u) \qquad (T\in\frakm\cap\frakk,u\in V). $$
This homomorphism lifts to a Lie group homomorphism 
\begin{equation}
	K\to L_\CC, \quad \,k\mapsto k_c,\label{eq:HomKLC}
\end{equation}
into the complexification
$$ L_\CC=\Str(V_\CC)=\{g\in\GL(V_\CC):P(gz)=gP(z)g'\mbox{ for all }z\in V_\CC\}, $$
the structure group of the complex Jordan algebra $V_\CC$, where $g'\in\GL(V_\CC)$ denotes the adjoint of $g\in\GL(V_\CC)$ with respect to the trace form $(\cdot|\cdot)$ on $V_\CC$. The image of this map is a compact real form of $L_\CC$.

In analogy to the Gelfand-Naimark decomposition \eqref{eq:GelfandNaimark}, we can decompose the complexification of $\frakg$ as
$$ \frakg_\CC = \frakp^+\oplus\frakk_\CC\oplus\frakp^-, $$
where
\begin{equation}
	\frakp^\pm = \{\overline{N}_u\pm2iL(u)-N_u:u\in V_\CC\},\label{eq:p-plus-minus}
\end{equation}
are the two irreducible components of $\frakp_\CC$ as $\frakk_\CC$-module.

\subsection{Unitary highest weight representations}\label{sec:UnitaryHighestWeightReps}

An irreducible unitary representation $(\sigma,\calH_\sigma)$ is called \emph{highest weight representation} if the space $\calH_\sigma^{\frakp^+}$ of vectors annihilated by the action of $\frakp^+$ is non-trivial. In this case, $\calH_\sigma^{\frakp^+}$ is an irreducible finite-dimensional representation of $K$ called the \emph{highest weight space} and the representation $\calH_\sigma$ is uniquely determined by its highest weight space. Using the isomorphism $\frakk_\CC\simeq\frakl_\CC$, we can view the highest weight space as a representation of $\frakl_\CC$. We will consider unitary highest weight representations for which the highest weight space lifts to a representation $(\pi,\calV_\pi)$ of $L_\CC$. In this case, we write $\pi_c$ for the corresponding representation of $K$ on $\calV_\pi$ given by $\pi_c(k)=\pi(k_c)$ with $k_c\in L_\CC$ as in \eqref{eq:HomKLC}.

For such a representation $(\pi,\calV_\pi)$, consider the subspace $\calV_\pi^{\overline{N}_L}$ fixed by the maximal unipotent subgroup $\overline{N}_L$ of $L$ with Lie algebra $\bigoplus_{j<k}\frakg_{-\frac{1}{2}(\gamma_j-\gamma_k)}$. This space is the lowest restricted weight space with respect to the abelian subalgebra $\fraka_\CC\subseteq\frakl_\CC$ and the choice of positive roots introduced in Section~\ref{sec:AssociatedGroups}. We write
$$ -\frac{1}{2}\sum_{j=1}^rm_j\gamma_j $$
for the corresponding lowest restricted weight and define
$$ \omega(\pi) = m_r. $$

The classification of unitary highest weight representations of $G$ has been achieved by Enright--Howe--Wallach~\cite{EHW83} and Jakobsen~\cite{Jak83} and can roughly be stated as follows. For a fixed irreducible finite-dimensional representation $(\pi,\calV_\pi)$ of $L_\CC$ consider the representations $\pi_\nu=\pi\otimes\det^\nu$ for $\nu\in\RR$, where $\det:L_\CC\to\CC^\times$ denotes the determinant of $g\in L_\CC$ as a linear operator on $V_\CC$. Then there exists a set $\calW_\pi$ referred to as the \emph{Berezin--Wallach set} which is the union of an interval of the form $(-\infty,c]$ and finitely many points in $(c,\infty)$ such that there exists a unitary highest weight representation of the universal cover $\widetilde{G}$ of $G$ with highest weight space isomorphic to $\pi_\nu$ if and only if $\nu\in\calW_\pi$. Moreover, the representations $\pi_\nu$ depend analytically on $\nu$ in the sense that the distribution character is analytic in $\nu\in\calW_\pi\subseteq\CC$.

A unitary highest weight representation that is contained in the discrete series for $G$ is called a \emph{holomorphic discrete series representation}. By \cite[Remark 5.12]{DG93}, this is the case if and only if the highest weight space $(\pi,\calV_\pi)$ satisfies $\omega(\pi)>\frac{2n}{r}-1$. Note that $\omega(\pi_\nu)=\omega(\pi)-\frac{2n}{r}\nu$, so for $\nu\in\calW_\pi$ small enough the representation $\pi_\nu$ is a holomorphic discrete series.

\subsection{The tube domain realization}\label{sec:TubeDomainModel}

Let $(\pi,\calV_\pi)$ be an irreducible representation of $L_\CC$ and choose an inner product $\langle\cdot,\cdot\rangle$ on $\calV_\pi$ such that $\langle\pi(g)v,w\rangle=\langle v,\pi(g^*)w\rangle$ ($v,w\in\calV_\pi$, $g\in L_\CC$), where $g^*=\overline{g}'$. For $z\in V_\CC^\times$ the invertible linear map $P(z)\in\GL(V_\CC)$ is in fact contained in the structure group $L_\CC$, and we put
$$ \pi(z) = \pi(P(z)) \in \End(\calV_\pi). $$
Note that a lowest weight vector $\xi\in\calV_\pi$ satisfies
\begin{equation}
	\pi\Big(\sum_{j=1}^r\lambda_je_j\Big)\xi = \lambda_1^{-m_1}\cdots\lambda_r^{-m_r}\xi.\label{eq:LowestWeightSpaceAction}
\end{equation}

For $\omega(\pi)>\frac{2n}{r}-1$, the representation $\pi$ occurs as the lowest $K$-type of a holomorphic discrete series representation of $G$. It can be realized on the Hilbert space
$$ \calH_\pi^2(T_\Omega)=\left\{F:T_\Omega\to\calV_\pi\mbox{ hol.}:\int_{T_\Omega}\langle\pi(y)^{-1}F(z),F(z)\rangle\,\Delta(y)^{-\frac{2n}{r}}\,dz<\infty\right\}. $$
This Hilbert space has reproducing kernel $K_\pi(z,w)=c_\pi\pi(-i(z-\overline{w}))$ for some constant $c_\pi>0$, i.e.
$$ \langle F(z),\xi\rangle = \langle F, K_z\xi\rangle\qquad (\xi\in\calV_\pi), $$
where $K_z\xi(w)=K(w,z)\xi$.

The group $G$ acts on $\calH_\pi^2(T_\Omega)$ by
$$ T_\pi(g)F(z) = \mu_\pi(g^{-1},z)F(g^{-1}\cdot z), $$
where $\mu_\pi(g,z)=\pi(\mu(g,z))$ with
$$ \mu(g,z) = \left(\frac{\partial(g\cdot z)}{\partial z}\right)^{-1}\in L_\CC. $$
This action can be made explicit for elements in $N$, $L$, $\overline{N}$ and for the inversion $j$:
\begin{align}
	T_\pi(n_u)F(z) &= F(z-u), && u\in V,\notag\\
	T_\pi(g)F(z) &= \pi(g)F(g^{-1}z), && g\in L,\notag\\
	T_\pi(\overline{n}_v)F(z) &= \pi(z)\pi(z^{-1}+v)F((z^{-1}+v)^{-1}), && v\in V,\notag\\
	T_\pi(j)F(z) &= \pi(z)F(-z^{-1}).\label{eq:H2TOmegaActionJ}
\end{align}
Differentiating, we obtain the Lie algebra action
\begin{align}
	dT_\pi(N_u) &= -\partial_u, && u\in V,\label{eq:LieAlgActionTOmega1}\\
	dT_\pi(S) &= -\partial_{Sz}+d\pi(S), && S\in\frakl,\label{eq:LieAlgActionTOmega2}\\
	dT_\pi(\overline{N}_v) &= -\partial_{P(z)v}+2d\pi(z\,\square\,v). && v\in V,\label{eq:LieAlgActionTOmega3}
\end{align}

The representation $(T_\pi,\calH_\pi^2(T_\Omega))$ is a unitary highest weight representation with highest weight space isomorphic to the representation $\pi_c$. 

The lowest $K$-type in $\calH_\pi^2(T_\Omega)$ is spanned by the functions
$$ z\mapsto\pi\left(\frac{z+ie}{2i}\right)\xi \qquad (\xi\in\calV_\pi). $$

\subsection{The bounded domain realization}

The tube domain $T_\Omega\subseteq V_\CC$ is biholomorphically equivalent to a bounded domain $D\subseteq V_\CC$. More precisely, define for $z,w\in V_\CC$ the Bergman operator
$$ B(z,w) = I -2z\square w+P(z)P(w) \in \End(V_\CC), $$
and let $D$ denote the connected component of $0$ in the set $\{w\in V_\CC:B(w,\overline{w})\gg0\}$. Then $D$ is a bounded domain and the Cayley transform $p:T_\Omega\to D,\,z\mapsto(z-ie)(z+ie)^{-1}$ is a biholomorphic equivalence with inverse $c:D\to T_\Omega,\,w\mapsto i(e+w)(e-w)^{-1}$ by \cite[Proposition X.4.2~(iii) and Theorem X.4.3]{FK94}.

Note that $B(z,w)\in L_\CC$ for $z,w\in D$ by \cite[Lemma X.4.4~(ii)]{FK94}. The bounded domain realization of the holomorphic discrete series representation with lowest $K$-type $(\pi,\calV_\pi)$ is on the Hilbert space
$$ \calH_\pi^2(D) = \left\{f:D\to \calV_\pi\mbox{ hol.}:\int_D\langle\pi(B(w,\overline{w}))^{-1}f(w),f(w)\rangle h(w)^{-\frac{2n}{r}}\,dw<\infty\right\}, $$
where
$$ h(w)^{-\frac{2n}{r}} = \det(B(w,\overline{w}))^{-1} \qquad (w\in D). $$

The Cayley transform induces a unitary (up to a scalar) isomorphism
$$ \gamma_\pi:\calH^2_\pi(T_\Omega)\to\calH^2_\pi(D), \quad \gamma_\pi F(w) = \pi(e-w)F(c(w)) $$
with inverse
$$ \gamma_\pi^{-1}:\calH^2_\pi(D)\to\calH^2_\pi(T_\Omega), \quad \gamma_\pi^{-1}f(z) = \pi\left(\frac{z+ie}{2i}\right)f(p(z)). $$
This defines a realization $D_\pi$ of the holomorphic discrete series on $\calH_\pi^2(D)$ by
$$ D_\pi(g) = \gamma_\pi\circ T_\pi(g)\circ\gamma_\pi^{-1} \qquad (g\in G). $$
It is given by
$$ D_\pi(g)f(w) = \nu_\pi(g_c^{-1},w)f(g_c^{-1}w), $$
where $g_c\in\Aut(D)$ is defined by $g_c=p\circ g\circ p^{-1}=c^{-1}\circ g\circ c$ and $\nu_\pi(g_c^{-1},w)=\pi(\nu(g_c^{-1},w))$ with
$$ \nu(g_c^{-1},w) = \left(\frac{\partial(g_c^{-1}\cdot w)}{\partial w}\right)^{-1} \in L_\CC. $$
In particular, $j_c=-\id_D$, so
\begin{equation}
	D_\pi(j)f(w) = \pi(-I)f(-w).\label{eq:DActionJ}
\end{equation}

The $K$-finite vectors in $\calH_\pi^2(D)$ are the $\calV_\pi$-valued polynomials, and the lowest $K$-type is spanned by the constant functions
$$ w\mapsto\xi \qquad (\xi\in\calV_\pi). $$

\subsection{The $L^2$-model}

For an irreducible representation $(\pi,\calV_\pi)$ of $L_\CC$ with $\omega(\pi)>\frac{2n}{r}-1$ the following integral converges absolutely:
\begin{equation}
	\widetilde{\Gamma}_\pi = \int_\Omega e^{-2\tr(u)}\pi(u)^{-1}\Delta(u)^{-\frac{2n}{r}}\,du.\label{eq:DefGammaFunction}
\end{equation}
We have $\widetilde{\Gamma}_\pi \in\End(\calV_\pi)^{L\cap K}$ and $\widetilde{\Gamma}_\pi$ is symmetric and positive definite. In particular, in the special case where $\pi|_{L\cap K}$ is irreducible, $\widetilde{\Gamma}_\pi$ is a scalar multiple of the identity, the scalar being (see \cite[Corollary 2.6 and Section 3.7]{DG93})
$$ \widetilde{\Gamma}_\pi(\alpha)=\const\times\,2^{-m_1-\cdots-m_r}\prod_{j=1}^r\Gamma\left(m_j-\frac{n}{r}-(j-1)\frac{d}{2}\right), $$
where the constant only depends on the dimension $n$ and rank $r$ of $V$.

The Hilbert space for the $L^2$-model of the holomorphic discrete series representation with lowest $K$-type $\pi$ is
$$ L^2_\pi(\Omega) = \left\{f:\Omega\to\calV_\pi:\int_\Omega\langle\widetilde{\Gamma}_\pi\pi(u^{\frac{1}{2}})^{-1} f(u),\pi(u^{\frac{1}{2}})^{-1}f(u)\rangle\,\Delta(u)^{-\frac{n}{r}}\,du<\infty\right\}. $$
with the obvious inner product. The Laplace transform
$$ \calL_\pi:L^2_\pi(\Omega)\to\calH^2_\pi(T_\Omega), \quad \calL_\pi f(z) = (2\pi)^{-\frac{n}{2}}\int_\Omega e^{i(z|u)}\pi(u)^{-1}f(u)\Delta(u)^{-\frac{n}{r}}\,du $$
is an isometric isomorphism (up to scalar) with inverse
$$ \calL_\pi^{-1}F(u) = (2\pi)^{-\frac{n}{2}}\Delta(u)^{\frac{n}{r}}\int_V e^{-i(x+iy|u)}\pi(u)F(x+iy)\,dx \qquad \mbox{for fixed $y\in\Omega$.} $$

Using the Laplace transform $\calL_\pi$, one can realize the holomorphic discrete series on $L^2_\pi(\Omega)$ by
$$ R_\pi(g) = \calL_\pi^{-1}\circ T_\pi(g)\circ\calL_\pi \qquad (g\in G). $$
By \cite[Theorem 5.4]{DG93} we have
\begin{align}
	R_\pi(n_u)f(x) &= e^{-i(x|u)}f(x), && u\in V,\label{eq:L2ActionN}\\
	R_\pi(g)f(x) &= \pi(g^*)^{-1}f(g^*x)&& g\in L,\label{eq:L2ActionL}\\
	R_\pi(j)f(x) &= \int_\Omega \calJ_\pi(u,x)f(u)\Delta(u)^{-\frac{n}{r}}\,du,\label{eq:L2ActionJ}
\end{align}
where $\calJ_\pi(u,v)\in\End(\calV_\pi)$ denotes the operator-valued Bessel function associated with $\pi$ (see \cite[Definition 3.5]{DG93}). Note that we use the notation $\calJ_\pi$ instead of $K_\pi$ as in \cite{DG93}, because for $V=\RR$ it essentially reduces to a classical $J$-Bessel function (see \cite[Chapter XV.2]{FK94}). The function $\calJ_\pi(u,v)$ is uniquely determined by the equivariance property
$$ \calJ_\pi(gu,(g^*)^{-1}v) = \pi(g)\circ\calJ_\pi(u,v)\circ\pi(g^*) \qquad (g\in L) $$
and the Laplace transform of $\calJ_\pi(u)=\calJ_\pi(u,e)$:
$$ \int_\Omega e^{i(z|u)}\pi(u)^{-1}\calJ_\pi(u)\Delta(u)^{-\frac{n}{r}}\,du = e^{-i\tr(z^{-1})}\pi(z) \qquad (z\in T_\Omega). $$

The Lie algebra action $dR_\pi$ associated with the representation $R_\pi$ can be computed by applying the Laplace transform to \eqref{eq:LieAlgActionTOmega1}, \eqref{eq:LieAlgActionTOmega2} and \eqref{eq:LieAlgActionTOmega3}, and it is given by
\begin{align*}
	dR_\pi(N_u) &= -i(u|x), && u\in V,\\
	dR_\pi(S) &= \partial_{S^*x}-d\pi(S^*), && S\in\frakl,\\
	dR_\pi(\overline{N}_v) &= i(v|\calB_\pi), && v\in V,
\end{align*}
where $\calB_\pi:C^\infty(V,\calV_\pi)\to C^\infty(V,\calV_\pi)\otimes V_\CC$ is the vector-valued Bessel operator given by
$$ (v|\calB_\pi f(x)) =  \sum_{\alpha,\beta}(P(\widehat{e}_\alpha,\widehat{e}_\beta)x|v)\partial_{e_\alpha}\partial_{e_\beta}f(x)-2\sum_\alpha d\pi(v\,\Box\,\widehat{e}_\alpha)\partial_{e_\alpha}f(x), $$
$(e_\alpha)$ denoting a basis of $V$ and $(\widehat{e}_\alpha)$ denoting the dual basis with respect to the trace form.

The $K$-finite vectors in $L^2_\pi(\Omega)$ are the products of $\calV_\pi$-valued polynomials on $\Omega$ and the function $e^{-\tr(x)}$, and the lowest $K$-type is spanned by the functions
$$ f_\xi(x)=e^{-\tr(x)}\xi, \qquad (\xi\in\calV_\pi). $$
For later computations, we need to determine the action of $K$ on the lowest $K$-type. Recall that $\frakk_\CC$ is isomorphic to $\frakl_\CC$. More precisely, the isomorphism is induced by the Lie algebra homomorphism $\frakk\to\frakl_\CC$ given by
$$ N_u+T+\overline{N}_u \mapsto T+2iL(u) \qquad (T\in\frakm\cap\frakk,u\in V). $$
This homomorphism lifts to a Lie group homomorphism $K\to L_\CC$ which we denote by $k\mapsto k_c$. The image of this map is a maximal compact subgroup of $L_\CC$.

\begin{lemma}\label{lem:ActionLKT}
	$R_\pi(k)f_\xi=f_{\pi_c(k)\xi}$ for all $k\in K$, $\xi\in\calV_\pi$.
\end{lemma}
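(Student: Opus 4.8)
The plan is to verify the identity $R_\pi(k)f_\xi = f_{\pi_c(k)\xi}$ on the Lie algebra level and then integrate. Since $K$ is connected (it is a maximal compact subgroup of the simple group $G = \Aut(T_\Omega)$ with trivial center; alternatively one may pass to the connected component and note both sides are continuous), it suffices to check the infinitesimal statement $dR_\pi(X)f_\xi = f_{d\pi_c(X)\xi}$ for all $X \in \frakk$, and indeed it is enough to check it on a spanning set of $\frakk$. A convenient spanning set is given by the elements $N_u + \overline{N}_u$ for $u \in V$ together with $\frakm \cap \frakk$, since under the isomorphism $\frakk \to \frakl_\CC$ these map to $2iL(u)$ and to $T \in \frakm\cap\frakk$ respectively, and such elements span $\frakl_\CC$.

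First I would treat $T \in \frakm \cap \frakk \subseteq \frakl$. Using the formula $dR_\pi(S) = \partial_{S^*x} - d\pi(S^*)$ for $S \in \frakl$ and applying it to $f_\xi(x) = e^{-\tr(x)}\xi$: the term $\partial_{T^*x}$ produces $-(T^*x \,|\, e)\,e^{-\tr(x)}\xi$ — here one uses that the directional derivative of $e^{-\tr(x)}$ in direction $v$ is $-\tr(v)e^{-\tr(x)} = -(v|e)e^{-\tr(x)}$ — and $(T^*x|e) = (x|T^{**}e)$. For $T \in \frakm\cap\frakk$ one has $T^* = -T$ (the relevant $T$ are skew for the trace form, being in the compact part), so this derivative term contributes $d\pi(T)f_\xi$ up to the sign bookkeeping, and combined with $-d\pi(T^*) = d\pi(T)$ one must check the $\partial$-term actually vanishes, i.e. $(Te|e)=0$, which holds since $T$ annihilates the unit $e$ (elements of $\frakm \cap \frakk$ fix $e$, or at least $Te \perp e$). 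Thus $dR_\pi(T)f_\xi = f_{d\pi(T)\xi} = f_{d\pi_c(T)\xi}$.

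The main work is the element $N_u + \overline{N}_u$, which corresponds under $\frakk \to \frakl_\CC$ to $2iL(u)$, so the target value should be $f_{2i\,d\pi(L(u))\xi}$. I would compute $dR_\pi(N_u)f_\xi + dR_\pi(\overline{N}_u)f_\xi$ using $dR_\pi(N_u) = -i(u|x)$ (multiplication operator) and $dR_\pi(\overline{N}_v) = i(v|\calB_\pi)$. Applying $-i(u|x)$ to $e^{-\tr(x)}\xi$ gives $-i(u|x)e^{-\tr(x)}\xi$. For the Bessel operator term, I apply the explicit formula for $(v|\calB_\pi f(x))$ to $f(x) = e^{-\tr(x)}\xi$: the second-order part $\sum_{\alpha,\beta}(P(\widehat e_\alpha,\widehat e_\beta)x|v)\partial_{e_\alpha}\partial_{e_\beta}$ hitting $e^{-\tr(x)}$ produces $(u|x)e^{-\tr(x)}$ up to identities among the $P(\widehat e_\alpha, \widehat e_\beta)$ and the trace form (this is where a short Jordan-algebra computation is needed — essentially $\sum_\alpha P(e_\alpha, \widehat e_\alpha) = $ a multiple of $L$-type operator, and one contracts against $x$ and $e$), and the first-order part $-2\sum_\alpha d\pi(v \,\Box\, \widehat e_\alpha)\partial_{e_\alpha}$ hitting $e^{-\tr(x)}\xi$ gives $2\sum_\alpha (e_\alpha|e)\, d\pi(v\,\Box\,\widehat e_\alpha)\xi \, e^{-\tr(x)} = 2\,d\pi(v \,\Box\, e)\xi\,e^{-\tr(x)} = 2\,d\pi(L(v))\xi\,e^{-\tr(x)}$, using $v \,\Box\, e = L(ve) + [L(v),L(e)] = L(v)$. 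Multiplying by $i$ and adding the $-i(u|x)$ term, the scalar terms $\mp i(u|x)e^{-\tr(x)}\xi$ should cancel exactly, leaving $2i\,d\pi(L(u))\xi\,e^{-\tr(x)} = f_{2i\,d\pi(L(u))\xi}$, as desired.

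The hard part will be the bookkeeping in the Bessel-operator computation: getting the coefficient and sign right in the claim that the second-order part of $(u|\calB_\pi)$ applied to $e^{-\tr(x)}$ reproduces exactly $(u|x)e^{-\tr(x)}$, which relies on the identity $\sum_{\alpha,\beta}(P(\widehat e_\alpha, \widehat e_\beta)x|u)\,\tr(e_\alpha)\tr(e_\beta) = (u|x)$ (or its correct variant after accounting for mixed versus repeated indices and the precise convention for $P(\cdot,\cdot)$ as the polarization of $P$). Once that identity is pinned down — it is a statement purely about the Euclidean Jordan algebra $V$ and the trace form, provable by evaluating on a Jordan frame — the rest is routine, and exponentiating the infinitesimal identity over the connected group $K$ finishes the proof.
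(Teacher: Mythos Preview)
Your proposal is correct and follows essentially the same approach as the paper, which reduces to the infinitesimal identity $dR_\pi(N_u+T+\overline{N}_u)f_\xi=f_{d\pi(T+2iL(u))\xi}$ and singles out the key computation $(u|\calB_\pi)f_\xi=(x|u)f_\xi+2f_{d\pi(L(u))\xi}$. The Jordan-algebra identity you flag as the hard part is in fact immediate: since $\sum_\alpha \tr(e_\alpha)\,\widehat e_\alpha = e$ and $P(e,e)=P(e)=\id$, one gets $\sum_{\alpha,\beta}(P(\widehat e_\alpha,\widehat e_\beta)x|u)\tr(e_\alpha)\tr(e_\beta)=(P(e,e)x|u)=(x|u)$.
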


\begin{proof}
	It suffices to show that $dR_\pi(N_u+T+\overline{N}_u)f_\xi=f_{d\pi(T+2iL(u))\xi}$. This is easily derived from the formulas above, the key computation being $(u|\calB_\pi)f_\xi=(x|u)f_\xi+2f_{d\pi(L(u))\xi}$.
\end{proof}

\section{Generalized Whittaker vectors on holomorphic discrete series}\label{sec:GenWhittakerVectors}

For $v\in V$ define $\psi(n_u)=\psi_v(n_u)=e^{-i(u|v)}$, then $\psi_v$ is a unitary character of $N$ and every such character is of this form. Similarly, we obtain the unitary characters of $\overline{N}$ by $\overline{\psi}_v(\overline{n}_u)=e^{-i(u|v)}$. (Note that $\overline{\psi}$ does not denote the character of $N$ which is complex conjugate to $\psi$.)

For a unitary representation $(\sigma,\calH_\sigma)$ of $G$ we write $\calH_\sigma^\infty$ for the subspace of smooth vectors and $\calH_\sigma^{-\infty}=(\calH_\sigma^\infty)'$ for its dual space, the space of distribution vectors. The space of generalized Whittaker vectors on $\sigma$ with respect to $\psi$ resp. $\overline{\psi}$ is defined by
$$ \calH_\sigma^{-\infty,\psi} = \{u\in\calH_\sigma^{-\infty}:u\big(\sigma(n)\varphi\big)=\psi(n)u\big(\varphi\big)\mbox{ for all }\varphi\in\calH_\sigma^\infty,n\in N\} $$
resp.
$$ \calH_\sigma^{-\infty,\overline{\psi}} = \{u\in\calH_\sigma^{-\infty}:u\big(\sigma(\overline{n})\varphi\big)=\overline{\psi}(\overline{n})u\big(\varphi\big)\mbox{ for all }\varphi\in\calH_\sigma^\infty,\overline{n}\in\overline{N}\}. $$

In this section we find explicit expressions for all generalized Whittaker vectors on holomorphic discrete series representations in the three realizations described in Section~\ref{sec:HolDS}.

\subsection{The $L^2$-model}

To construct distribution vectors in $L^2_\pi(\Omega)^{-\infty}=(L^2_\pi(\Omega)^\infty)'$, we first give a lower and an upper bound for the space $L^2_\pi(\Omega)^\infty$ of smooth vectors in $L^2_\pi(\Omega)$. This statement is probably well-known, but we could not find it in the literature in this generality, so we include a short proof for convenience.

\begin{lemma}
	The following inclusions hold with continuous embeddings:
	$$ C_c^\infty(\Omega)\otimes\calV_\pi\subseteq L^2_\pi(\Omega)^\infty\subseteq C^\infty(\Omega)\otimes\calV_\pi. $$
\end{lemma}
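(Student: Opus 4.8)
The strategy is the standard one for identifying smooth vectors in a realization where the $K$-action is explicit and the full group is generated by $K$ together with a few simple one-parameter subgroups. First I would record that $L^2_\pi(\Omega)$ is a representation whose restriction to $N$ is given by the multiplication operators $R_\pi(n_u)f(x) = e^{-i(x|u)}f(x)$ and whose restriction to $L$ is the geometric action $R_\pi(g)f(x) = \pi(g^*)^{-1}f(g^*x)$, while the inversion $j$ acts by the Bessel integral transform \eqref{eq:L2ActionJ}. Smooth vectors for $N$ alone impose no regularity constraint (the $N$-action is by bounded multiplication operators depending smoothly on $u$), but smooth vectors for $L$ force, via the infinitesimal action $dR_\pi(S) = \partial_{S^*x} - d\pi(S^*)$, that all $\fraka$- and $\frakn_L$-derivatives of $f$ lie in $L^2_\pi(\Omega)$; since $L$ acts transitively on $\Omega$, differentiating along $L$ yields all directional derivatives on $\Omega$, hence $L^2_\pi(\Omega)^\infty \subseteq \bigcap_k \{f : \text{all order-}k\text{ derivatives in } L^2_\pi(\Omega)\}$. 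A Sobolev-type embedding on the homogeneous space $\Omega \cong L/(L\cap K)$ then gives continuous inclusion of this intersection into $C^\infty(\Omega)\otimes\calV_\pi$, which is the upper bound. (One should be slightly careful: a priori smoothness is only guaranteed away from possible degeneration of the weight $\Delta(x)^{-n/r}\widetilde\Gamma_\pi\pi(x)^{-1}$, but this weight is smooth and positive definite on all of $\Omega$, so local elliptic regularity from the Casimir — or just the $L$-transitivity argument — upgrades $L^2_{\mathrm{loc}}$-smoothness to honest $C^\infty$ on $\Omega$.)

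For the lower bound I would show $C_c^\infty(\Omega)\otimes\calV_\pi \subseteq L^2_\pi(\Omega)^\infty$ with continuous embedding. The inclusion into $L^2_\pi(\Omega)$ is clear since the weight is locally bounded and the functions have compact support. To see that a compactly supported smooth function $f$ is a smooth vector, it suffices to check that $g \mapsto R_\pi(g)f$ is smooth from $G$ to $L^2_\pi(\Omega)$; since $G$ is generated by $N$, $L$ and $j$, and differentiable dependence is a local statement, it is enough to verify smoothness of $u \mapsto R_\pi(n_u)f$, of $g\mapsto R_\pi(g)f$ for $g$ near $e$ in $L$, and of $R_\pi(j)f$ being again a smooth vector — or, more efficiently, to invoke that $C_c^\infty(\Omega)\otimes\calV_\pi$ is stable under the infinitesimal action of all of $\frakg$ and apply a standard criterion (e.g.\ Goodman's theorem, or the argument that a dense $\frakg$-stable subspace of $C_c$-type functions consisting of analytic vectors is contained in $\calH^\infty$). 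The $N$- and $L$-parts of $d R_\pi$ visibly preserve $C_c^\infty(\Omega)\otimes\calV_\pi$ (multiplication by the polynomial $(u|x)$, and first-order differential operators with smooth coefficients pushing support around inside $\Omega$), so the only genuine point is the $\overline{N}$-direction, governed by the Bessel operator $\calB_\pi$: this is a second-order differential operator with polynomial coefficients $(P(\widehat e_\alpha,\widehat e_\beta)x|v)$ and $d\pi(v\,\Box\,\widehat e_\alpha)$, hence again preserves $C_c^\infty(\Omega)\otimes\calV_\pi$. Thus the whole universal enveloping algebra $U(\frakg)$ preserves $C_c^\infty(\Omega)\otimes\calV_\pi$, and a Sobolev-norm estimate (bounding $\|dR_\pi(X_1)\cdots dR_\pi(X_k)f\|$ by a Fréchet seminorm of $f$, uniformly for $f$ supported in a fixed compact set) both establishes membership in $\calH^\infty$ and gives continuity of the embedding.

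\textbf{Expected main obstacle.} The delicate point is the $\overline{N}$-direction, i.e.\ controlling the second-order Bessel operator $\calB_\pi$: one must confirm that it maps $C_c^\infty(\Omega)\otimes\calV_\pi$ into itself (immediate, since its coefficients are polynomial in $x$ and the differentiations are interior to $\Omega$) and, for the \emph{quantitative} continuity of the embedding, that iterated applications of elements of $U(\frakg)$ — which mix the multiplication operators $(u|x)$, the $L$-vector fields, and $\calB_\pi$ — produce only polynomially-growing coefficients, so that the resulting $L^2_\pi(\Omega)$-norms are dominated by finitely many $C^k$-seminorms of $f$ on its (fixed) support. The upper-bound direction is comparatively soft once one notes the $L$-transitivity of $\Omega$ and the positivity/smoothness of the weight; the genuine care goes into the lower bound and into making both embeddings topological rather than merely set-theoretic.
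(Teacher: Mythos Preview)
Your proposal is correct and follows essentially the same route as the paper: for the upper bound you use that the $\frakl$-action is by first-order operators $\partial_{S^*x}-d\pi(S^*)$, that $L$ acts transitively on $\Omega$ so these vector fields span the tangent space, and then apply local Sobolev embedding; for the lower bound you check that $C_c^\infty(\Omega)\otimes\calV_\pi$ is $\calU(\frakg)$-stable and invoke the characterization $\calH^\infty=\{f:dR_\pi(D)f\in\calH\text{ for all }D\in\calU(\frakg)\}$. The paper treats the lower bound slightly more economically by observing at once that \emph{all} of $dR_\pi(\frakg)$ consists of differential operators with polynomial coefficients, so the $\overline N$-direction (your ``main obstacle'') is no harder than the others and needs no separate analysis of $\calB_\pi$.
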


\begin{proof}
	First note that the Lie algebra representation $dR_\pi$ of $\frakg$ on $L^2_\pi(\Omega)^\infty$ is given by differential operators with polynomial coefficients and can therefore be extended to a representation on $\calD'(\Omega)\otimes\calV_\pi$. Using this extension, we can characterize the space of smooth vectors by
	$$ L^2_\pi(\Omega)^\infty=\{f\in L^2_\pi(\Omega):dR_\pi(D)f\in L^2_\pi(\Omega)\mbox{ for all }D\in\calU(\frakg_\CC)\}, $$
	where $\calU(\frakg_\CC)$ denotes the universal enveloping algebra of $\frakg_\CC$. Since every differential operator with smooth coefficients maps $C_c^\infty(\Omega)\otimes\calV_\pi\subseteq L^2_\pi(\Omega)$ into itself, the first inclusion follows. It is continuous since $C_c^\infty(\Omega)\otimes\calV_\pi$ embeds continuously into $L^2_\pi(\Omega)$ and differential operators act continuously on $C_c^\infty(\Omega)\otimes\calV_\pi$. For the second inclusion it suffices to consider the action of $\frakl$ which is given by $dR_\pi(S)=\partial_{S^*x}-d\pi(S^*)$. Since $d\pi(S^*)$ clearly maps $L^2_\pi(\Omega)$ into itself, the space $L^2_\pi(\Omega)^\infty$ is contained in
	$$ \{f\in L^2_\pi(\Omega):\partial_{S_1^*x}\cdots\partial_{S_n^*x}f\in L^2_\pi(\Omega)\mbox{ for all }S_1,\ldots,S_n\in\frakl,n\in\NN\}. $$
	Note that the group $L$ acts transitively on $\Omega$ and hence the vector fields in $\frakl$ span the tangent space at each point of $\Omega$. Applying the Sobolev Embedding Theorem locally shows that $L^2_\pi(\Omega)^\infty$ embeds continuously into $C^k(\Omega)\otimes\calV_\pi$ for every $k\in\NN$, hence into $C^\infty(\Omega)\otimes\calV_\pi$.
\end{proof}

Dualizing, we find
$$ \calE'(\Omega)\otimes\calV_\pi^\vee\subseteq L^2_\pi(\Omega)^{-\infty}\subseteq\calD'(\Omega)\otimes\calV_\pi^\vee. $$
Hence, for $v\in\Omega$ and $\eta\in\calV_\pi^\vee=\Hom_\CC(\calV_\pi,\CC)$, the Dirac distribution $W_{v,\eta}^\Omega=\delta_v\otimes\eta$ defined by
\begin{equation}
	W_{v,\eta}^\Omega(f) = \eta\big(f(v)\big) \qquad (f\in L^2_\pi(\Omega)^\infty)\label{eq:WhittakerNL2}
\end{equation}
is contained in $L^2_\pi(\Omega)^{-\infty}$. We further denote by $\overline{W}_{v,\eta}^\Omega=\eta\circ\calJ_\pi(\cdot,v)\in\calD'(\Omega)\otimes\calV_\pi^\vee$ the distribution given by
\begin{equation}
	\overline{W}_{v,\eta}^\Omega(f) = \int_\Omega\eta\big(\calJ_\pi(u,v)f(u)\big)\Delta(u)^{-\frac{n}{r}}\,du \qquad (f\in C_c^\infty(\Omega)\otimes\calV_\pi).\label{eq:WhittakerNbarL2}
\end{equation}
A priori it is not clear that $\overline{W}_{v,\eta}^\Omega$ extends to a distribution vector in $L^2_\pi(\Omega)^{-\infty}$. This is part of the following statement which classifies Whittaker vectors in the $L^2$-model of the holomorphic discrete series:

\begin{proposition}\label{prop:WhittakerVectorsL2}
	Let $v\in\Omega$.
	\begin{enumerate}
		\item The space $L^2_\pi(\Omega)^{-\infty,\psi_v}$ is spanned by the distribution vectors $W_{v,\eta}^\Omega$, $\eta\in\calV_\pi^\vee$.
		\item The space $L^2_\pi(\Omega)^{-\infty,\overline{\psi}_v}$ is spanned by the distribution vectors $\overline{W}_{v,\eta}^\Omega$, $\eta\in\calV_\pi^\vee$.
	\end{enumerate}
	In particular,
	$$ \dim L^2_\pi(\Omega)^{-\infty,\psi_v} = \dim L^2_\pi(\Omega)^{-\infty,\overline{\psi}_v} = \dim\pi. $$
\end{proposition}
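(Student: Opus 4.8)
The plan is to treat the two cases by transporting the defining equivariance condition for Whittaker vectors through the $L^2$-model, where the action of $N$ is given by the very simple formula \eqref{eq:L2ActionN}, namely $R_\pi(n_u)f(x)=e^{-i(x|u)}f(x)$. For part (1), a distribution vector $W\in L^2_\pi(\Omega)^{-\infty}$ lies in $L^2_\pi(\Omega)^{-\infty,\psi_v}$ precisely when $W(R_\pi(n_u)f)=e^{-i(u|v)}W(f)$ for all $u\in V$ and all smooth vectors $f$. Since $R_\pi(n_u)$ acts by multiplication with the function $x\mapsto e^{-i(x|u)}$, differentiating this identity at $u=0$ in the direction $u$ shows that $W$ must be annihilated by multiplication by $(x-v|u)$ for every $u\in V$, hence $W$ is a $\calV_\pi^\vee$-valued distribution supported at $\{v\}$. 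A distribution supported at a point is a finite linear combination of the Dirac distribution and its derivatives; plugging such a combination back into the first-order equation $(x-v|u)W=0$ for all $u$ forces all derivative terms to vanish, leaving exactly the $W_{v,\eta}^\Omega=\delta_v\otimes\eta$ with $\eta\in\calV_\pi^\vee$. Conversely each $W_{v,\eta}^\Omega$ visibly satisfies the equivariance property by \eqref{eq:L2ActionN}. This gives $L^2_\pi(\Omega)^{-\infty,\psi_v}\cong\calV_\pi^\vee$, of dimension $\dim\pi$.

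For part (2), I would reduce the $\overline{N}$-case to the $N$-case by applying the inversion $j$. Since $j$ conjugates $\overline{N}$ to $N$ (more precisely $\overline{n}_u=jn_uj^{-1}$), the operator $R_\pi(j)$ intertwines the two equivariance conditions: if $W\in L^2_\pi(\Omega)^{-\infty,\psi_v}$ then $W\circ R_\pi(j)\in L^2_\pi(\Omega)^{-\infty,\overline{\psi}_v}$ (up to checking the character matches, which it does because $\overline{\psi}_v(\overline{n}_u)=e^{-i(u|v)}=\psi_v(n_u)$), and $R_\pi(j)$ is a unitary isomorphism so this is a bijection between the two Whittaker spaces. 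Therefore part (2) follows formally from part (1) once one identifies $W_{v,\eta}^\Omega\circ R_\pi(j)$ with $\overline{W}_{v,\eta}^\Omega$. Using the explicit formula \eqref{eq:L2ActionJ}, $R_\pi(j)f(x)=\int_\Omega\calJ_\pi(u,x)f(u)\Delta(u)^{-\frac{n}{r}}\,du$, one computes
$$ W_{v,\eta}^\Omega\big(R_\pi(j)f\big)=\eta\Big(\int_\Omega\calJ_\pi(u,v)f(u)\Delta(u)^{-\tfrac{n}{r}}\,du\Big)=\overline{W}_{v,\eta}^\Omega(f), $$
which simultaneously proves that $\overline{W}_{v,\eta}^\Omega$ really does extend to a distribution vector in $L^2_\pi(\Omega)^{-\infty}$ (it is the pullback of the genuine distribution vector $W_{v,\eta}^\Omega$ under the bounded operator $R_\pi(j)$), addresses the a priori concern raised before the proposition, and yields the stated spanning set and dimension count $\dim L^2_\pi(\Omega)^{-\infty,\overline{\psi}_v}=\dim\pi$.

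The main obstacle is the structure-of-distributions-supported-at-a-point step in part (1): one has to argue carefully that the first-order system $(x-v|u)W=0$ for all $u\in V$ forces $W=\delta_v\otimes\eta$ and kills all the derivative-of-Dirac contributions. This is standard — a distribution on $V$ supported at $v$ is $\sum_{|\alpha|\le m}c_\alpha\,\partial^\alpha\delta_v$ with $\calV_\pi^\vee$-valued coefficients, and testing against $(x-v|u)$ for a spanning set of $u$'s lowers the order and recursively forces $c_\alpha=0$ for $|\alpha|\ge 1$ — but it is the one place that is not purely formal. A secondary technical point is to make sure the manipulations take place within the space of genuine distribution vectors $L^2_\pi(\Omega)^{-\infty}$ rather than merely in $\calD'(\Omega)\otimes\calV_\pi^\vee$; here the sandwich $\calE'(\Omega)\otimes\calV_\pi^\vee\subseteq L^2_\pi(\Omega)^{-\infty}\subseteq\calD'(\Omega)\otimes\calV_\pi^\vee$ from the preceding lemma does the bookkeeping, since $\delta_v\otimes\eta\in\calE'(\Omega)\otimes\calV_\pi^\vee$ and $\overline{W}_{v,\eta}^\Omega$ is handled by the pullback-under-$R_\pi(j)$ argument above.
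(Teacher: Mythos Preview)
Your proof is correct and follows essentially the same approach as the paper. The only cosmetic difference is in part~(1): the paper works with the full group relation $(e^{-i(u|\cdot)}-e^{-i(u|v)})T=0$ directly, while you first differentiate at $u=0$ to reduce to the linear system $(x-v|u)W=0$; both routes conclude $\supp W\subseteq\{v\}$ and then eliminate higher-order Dirac terms, and part~(2) via composition with $R_\pi(j)$ is identical.
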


\begin{proof}
	\begin{enumerate}
		\item For $f\in L^2_\pi(\Omega)^\infty$ and $n=n_u\in N$ we have
		$$ W_{v,\eta}^\Omega\big(R_\pi(n_u)f\big) = \eta\big(R_\pi(n_u)f(v)\big) = \eta\big(e^{-i(u|v)}f(v)\big) = \psi_v(n_u)W_{v,\eta}^\Omega(f), $$
		so $W_{v,\eta}^\Omega\in L^2_\pi(\Omega)^{-\infty,\psi_v}$. Conversely, if $T\in L^2_\pi(\Omega)^{-\infty,\psi_v}\subseteq\calD'(\Omega)\otimes\calV_\pi^\vee$ then
		$$ T\left(\Big(e^{-i(u|\cdot)}-e^{-i(u|v)}\Big)f\right) = T\big(R_\pi(n_u)f-\psi_v(n_u)f\big) = 0 $$
		for all $f\in C_c^\infty(\Omega)\otimes\calV_\pi$ and $u\in V$. It follows that $\supp T\subseteq\{v\}$, so $T$ is a linear combinations of derivatives of the Dirac distribution at $v$. Since derivatives of $x\mapsto e^{-i(u|x)}-e^{-i(u|v)}$ do not vanish at $x=v$ for all $u\in V$, it follows that $T$ is of order $0$ and hence $T=\delta_v\otimes\eta$ for some $\eta\in\calV_\pi^\vee$.
		\item To obtain generalized Whittaker vectors for the opposite unipotent subgroup $\overline{N}=jNj^{-1}$ we use the fact that $j\overline{N}j^{-1}=N$ and $\overline{\psi}_v(\overline{n})=\psi_v(j\overline{n}j^{-1})$. Then the map
		$$ L^2_\pi(\Omega)^{-\infty,\psi_v} \to L^2_\pi(\Omega)^{-\infty,\overline{\psi}_v}, \quad T\mapsto T\circ R_\pi(j) $$
		is an isomorphism. Since the action of $j$ in $R_\pi$ is given by \eqref{eq:L2ActionJ}, the result follows.\qedhere
	\end{enumerate}
\end{proof}

\begin{remark}
	The explicit formulas in Proposition~\ref{prop:WhittakerVectorsL2} for Whittaker vectors in the $L^2$-model were previously obtained by Kostant~\cite[Theorem 5.15]{Kos00} for the case $G=\SL(2,\RR)$, and by the first author~\cite[Theorem 6.11]{Moe13} for general $G$ and holomorphic discrete series with one-dimensional lowest $K$-type. Both references also consider Whittaker vectors for characters $\psi=\psi_v$ with $v\in\overline{\Omega}$ and also study the analytic continuation of the holomorphic discrete series.
\end{remark}

\subsection{The tube domain realization}\label{sec:TubeDomainModel}

We now find the generalized Whittaker vectors in the realization $\calH^2_\pi(T_\Omega)^{-\infty}$ by applying the Laplace transform to the generalized Whittaker vectors in $L^2_\pi(\Omega)^{-\infty}$ obtained in Proposition~\ref{prop:WhittakerVectorsL2}. For this, we first identify $\calH^2_\pi(T_\Omega)^{-\infty}$ with a space of antiholomorphic functions on $T_\Omega$ with values in $\calV_\pi^\vee$.

\begin{lemma}
	Every distribution vector $T\in\calH^2_\pi(T_\Omega)^{-\infty}$ is given by an antiholomorphic function $\Pi:T_\Omega\to \calV_\pi^\vee$ in the sense that
	\begin{equation}
		T(F) = \int_{T_\Omega}\Pi(z)\big(\pi(y)^{-1}F(z)\big)\Delta(y)^{-\frac{2n}{r}}\,dz \qquad (F\in\calH_\pi^2(T_\Omega)^\infty).\label{eq:DistribVectorHolomFct}
	\end{equation}
\end{lemma}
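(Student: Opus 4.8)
The statement asserts that every distribution vector $T$ on the reproducing-kernel Hilbert space $\calH^2_\pi(T_\Omega)$ is represented by an antiholomorphic $\calV_\pi^\vee$-valued function $\Pi$ via the sesquilinear pairing weighted by $\pi(y)^{-1}\Delta(y)^{-2n/r}$. The natural approach is to use the reproducing kernel together with the Riesz-type identification of the space of distribution vectors of a unitary representation as a completion under a weaker topology, and to produce $\Pi$ explicitly by evaluating $T$ on the reproducing kernel functions. Concretely, for a Hilbert-space element $G\in\calH^2_\pi(T_\Omega)$ we have $\langle G(z),\xi\rangle=\langle G,K_z\xi\rangle$ where $K_z\xi(w)=K_\pi(w,z)\xi=c_\pi\pi(-i(w-\overline{z}))\xi$. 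The plan is: first I would show $K_z\xi\in\calH^2_\pi(T_\Omega)^\infty$ for every $z\in T_\Omega$, $\xi\in\calV_\pi$ (since the action of $\frakg$ by the differential operators \eqref{eq:LieAlgActionTOmega1}--\eqref{eq:LieAlgActionTOmega3} preserves the space of functions of the form $w\mapsto\pi(\text{affine in }w,\overline{z})\xi$, or more cleanly since matrix coefficients $g\mapsto\langle T_\pi(g)K_z\xi, \cdot\rangle$ extend the $K$-finite behaviour — in fact $K_z\xi$ lies in the $G$-orbit closure of the lowest $K$-type under $T_\pi$, hence is smooth). Then I would define
$$ \Pi(z)(\xi) = \overline{T(K_z\xi)} \qquad (z\in T_\Omega,\ \xi\in\calV_\pi), $$
check that $\xi\mapsto\Pi(z)(\xi)$ is conjugate-linear and hence $\Pi(z)\in\calV_\pi^\vee$ after the appropriate conjugation convention, and that $z\mapsto\Pi(z)$ is antiholomorphic because $z\mapsto K_z\xi$ is antiholomorphic with values in $\calH^2_\pi(T_\Omega)^\infty$ (the holomorphic dependence of $K_\pi(w,z)=c_\pi\pi(-i(w-\overline z))$ on $\overline z$ is manifest, and $T$ is continuous).

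Next I would establish formula \eqref{eq:DistribVectorHolomFct} by density. For $F\in\calH^2_\pi(T_\Omega)^\infty$ the reproducing property gives $F(z)=c_\pi\int_{T_\Omega}\pi(-i(z-\overline w))\pi(\Im w)^{-1}F(w)\Delta(\Im w)^{-2n/r}\,dw$ in the weak sense, i.e. $F=\int_{T_\Omega}K_w(\pi(\Im w)^{-1}F(w))\,d\mu(w)$ as a Bochner-type integral in the Hilbert space, where $d\mu(w)=\Delta(\Im w)^{-2n/r}\,dw$. Applying the continuous functional $T$ and interchanging $T$ with the integral — justified because the integrand is a continuous $\calH^2_\pi(T_\Omega)$-valued map and, after restricting to a suitable dense/nuclear subspace or using that $F\in\calH^2_\pi(T_\Omega)^\infty$ makes the integral converge in the smooth topology — yields
$$ T(F) = \int_{T_\Omega} T\big(K_w(\pi(\Im w)^{-1}F(w))\big)\,d\mu(w) = \int_{T_\Omega}\Pi(w)\big(\pi(\Im w)^{-1}F(w)\big)\,d\mu(w), $$
which is exactly the claimed formula after relabelling $w\mapsto z$ and $\Im w\mapsto y$, provided one is careful that the pairing $\Pi(w)(\cdot)$ is set up so that $T(K_w\xi)=\Pi(w)(\xi)$ (adjusting by a complex conjugation if the convention for $\calV_\pi^\vee$ is the conjugate-linear dual). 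Uniqueness of $\Pi$ follows since $\{K_z\xi\}$ is total in $\calH^2_\pi(T_\Omega)$ and a fortiori the pairing determines $T$, hence determines $\Pi(z)$ on all of $\calV_\pi$ for each $z$.

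The main obstacle is the justification of interchanging the distribution $T$ with the Hilbert-space-valued integral, together with the companion point that $K_z\xi$ is a smooth vector. The cleanest route is to note that $\calH^2_\pi(T_\Omega)^\infty$ carries its Fréchet topology and the map $w\mapsto K_w(\pi(\Im w)^{-1}F(w))$, for fixed $F\in\calH^2_\pi(T_\Omega)^\infty$, is continuous into $\calH^2_\pi(T_\Omega)^\infty$ and the integral converges there — this can be checked by applying an arbitrary $D\in\calU(\frakg_\CC)$, using that $dT_\pi(D)K_w\xi$ depends polynomially on $w,\overline w$ with the decay coming from $\Delta(\Im w)^{-2n/r}$ and the square-integrability of $F$ — so that $T$, being continuous on $\calH^2_\pi(T_\Omega)^\infty$, commutes with the integral by the standard property of Bochner integrals paired against continuous functionals. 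Smoothness of $K_z\xi$ itself follows from the same polynomial-coefficient description of $dT_\pi$ in \eqref{eq:LieAlgActionTOmega1}--\eqref{eq:LieAlgActionTOmega3} plus the fact that applying these operators to $w\mapsto\pi(-i(w-\overline z))\xi$ stays in $\calH^2_\pi(T_\Omega)$ — alternatively, $K_z\xi$ is a finite sum of $T_\pi(g_i)$ applied to lowest-$K$-type vectors (which are smooth), since the $G$-translates of the lowest $K$-type span a dense subspace and the reproducing kernel at any point lies in the span of the orbit. I would present the Bochner-integral interchange as the crux and relegate the smoothness of $K_z\xi$ and the antiholomorphy of $\Pi$ to short remarks.
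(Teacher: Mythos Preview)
Your approach via the reproducing kernel is genuinely different from the paper's. The paper gives a two-line argument citing Cartier's theorem \cite{Car76}: the space of distribution vectors satisfies $\calH_\pi^2(T_\Omega)^{-\infty}=\overline{dT_\pi(\calU(\frakg_\CC))\calH_\pi^2(T_\Omega)}$, i.e.\ every distribution vector arises (via the conjugate-linear Riesz identification) from some $DG$ with $D\in\calU(\frakg_\CC)$ and $G\in\calH_\pi^2(T_\Omega)$. Since $G$ is holomorphic and $dT_\pi$ acts by holomorphic differential operators (see \eqref{eq:LieAlgActionTOmega1}--\eqref{eq:LieAlgActionTOmega3}), $DG$ is again holomorphic, so $\Pi(z)(\cdot)=\langle\cdot,(DG)(z)\rangle$ is antiholomorphic and the integral formula \eqref{eq:DistribVectorHolomFct} is simply the inner product on $\calH_\pi^2(T_\Omega)$ written out. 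No Bochner-integral interchange is needed.

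Your reproducing-kernel strategy is natural and can probably be completed, but the justification you offer for the crucial interchange step contains a concrete error: the claim that ``$dT_\pi(D)K_w\xi$ depends polynomially on $w,\overline w$'' is false. The kernel is $K_w\xi(z')=c_\pi\,\pi(-i(z'-\overline{w}))\xi=c_\pi\,\pi(P(-i(z'-\overline{w})))\xi$, and by \eqref{eq:LowestWeightSpaceAction} the representation $\pi$ acts on a lowest weight vector by \emph{negative} powers $\lambda_1^{-m_1}\cdots\lambda_r^{-m_r}$; in general $\pi(P(\cdot))$ is rational with poles along $\{\Delta=0\}$, not a polynomial. Hence the integrability you need does not follow from polynomial growth paired against the weight $\Delta(\Im w)^{-2n/r}$; you would have to produce genuine norm estimates for $\|dT_\pi(D)K_w\xi\|$ as $w$ ranges over $T_\Omega$ and check they are integrable against $\|\pi(\Im w)^{-1}F(w)\|\,\Delta(\Im w)^{-2n/r}$. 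This is doable but not the one-line remark you make it, whereas the Cartier route bypasses it entirely. (A minor side point: with the conventions here the correct definition is $\Pi(z)(\xi)=T(K_z\xi)$ without the complex conjugate, since $\xi\mapsto K_z\xi$ is $\CC$-linear and $z\mapsto K_z\xi$ is already antiholomorphic.)
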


\begin{proof}
	The argument given in \cite[beginning of Section 3]{CF04} generalizes to the vector-valued situation and we briefly sketch it to keep this work self-contained. The distribution vectors are given by $\calH_\pi^2(T_\Omega)^{-\infty}=\overline{dT_\pi(\calU(\frakg_\CC))\calH_\pi^2(T_\Omega)}$ by \cite[Théorème 1.3]{Car76}, where $\calU(\frakg_\CC)$ denotes the universal enveloping algebra of $\frakg_\CC$ and $\overline{V}$ the complex conjugate of a vector space $V$. Since $\calH_\pi^2(T_\Omega)$ consists of holomorphic functions and $dT_\pi$ is given by holomorphic differential operators (see \eqref{eq:LieAlgActionTOmega1}, \eqref{eq:LieAlgActionTOmega2} and \eqref{eq:LieAlgActionTOmega3}), the result follows.
\end{proof}

For $v\in\Omega$ and $\eta\in\calV_\pi^\vee$ we define antiholomorphic functions $W_{v,\eta}^{T_\Omega}$ and $\overline{W}_{v,\eta}^{T_\Omega}$ on $T_\Omega$ with values in $\calV_\pi^\vee$ by
\begin{equation}
	W_{v,\eta}^{T_\Omega}(z) = e^{-i(\overline{z}|v)}\eta \qquad \overline{W}_{v,\eta}^{T_\Omega}(z)=e^{i(\overline{z}^{-1}|v)}\eta\circ\pi(z).\label{eq:WhittakerTOmega}
\end{equation}
It is easy to see that the functions $W_{v,\eta}^{T_\Omega}$ and $\overline{W}_{v,\eta}^{T_\Omega}$ satisfy the right equivariance condition with respect to the action of $N$ and $\overline{N}$, but it is not clear a priori that they define distribution vectors in $\calH_\pi^2(T_\Omega)^{-\infty}$. This is part of the following classification statement:

\begin{proposition}\label{prop:WhittakerVectorsTOmega}
	Let $v\in\Omega$.
	\begin{enumerate}
		\item\label{prop:WhittakerVectorsTOmega1} The space $\calH^2_\pi(T_\Omega)^{-\infty,\psi_v}$ is spanned by the distribution vectors $W_{v,\eta}^{T_\Omega}$, $\eta\in\calV_\pi^\vee$.
		\item\label{prop:WhittakerVectorsTOmega2} The space $\calH^2_\pi(T_\Omega)^{-\infty,\overline{\psi}_v}$ is spanned by the distribution vectors $\overline{W}_{v,\eta}^{T_\Omega}$, $\eta\in\calV_\pi^\vee$.
	\end{enumerate}
\end{proposition}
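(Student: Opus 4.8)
The strategy is to transport the classification of Whittaker vectors from the $L^2$-model (Proposition~\ref{prop:WhittakerVectorsL2}) to the tube domain model via the Laplace transform $\calL_\pi$, which is (up to scalar) a unitary isomorphism $L^2_\pi(\Omega)\to\calH^2_\pi(T_\Omega)$ intertwining $R_\pi$ and $T_\pi$. Dualizing, $\calL_\pi$ induces an isomorphism $L^2_\pi(\Omega)^{-\infty}\to\calH^2_\pi(T_\Omega)^{-\infty}$, $T\mapsto T\circ\calL_\pi^{-1}$, which restricts to an isomorphism between the corresponding spaces of $\psi_v$- (resp. $\overline{\psi}_v$-) equivariant distribution vectors since $\calL_\pi^{-1}$ intertwines $T_\pi(n)$ with $R_\pi(n)$ for $n\in N$ (resp. $n\in\overline{N}$). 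Hence it suffices to compute the image of the Dirac-type vectors $W^\Omega_{v,\eta}$ and $\overline{W}^\Omega_{v,\eta}$ under this correspondence and identify them with the antiholomorphic functions in \eqref{eq:WhittakerTOmega}.

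For part~\eqref{prop:WhittakerVectorsTOmega1}, I would apply $W^\Omega_{v,\eta}$ to $\calL_\pi^{-1}F$ using the explicit formula for $\calL_\pi^{-1}$, or more conveniently argue directly: for $F\in\calH^2_\pi(T_\Omega)^\infty$ one has $F=\calL_\pi f$ with $f\in L^2_\pi(\Omega)^\infty$, and
$$ (W^\Omega_{v,\eta}\circ\calL_\pi^{-1})(F) = W^\Omega_{v,\eta}(f) = \eta(f(v)). $$
Using the integral formula for $\calL_\pi^{-1}$ and comparing with the pairing \eqref{eq:DistribVectorHolomFct}, one reads off that the antiholomorphic function representing this distribution vector is a scalar multiple of $z\mapsto e^{-i(\overline{z}|v)}\eta$, i.e. $W^{T_\Omega}_{v,\eta}$ up to a nonzero constant. (Alternatively, one checks that $W^{T_\Omega}_{v,\eta}$ as defined in \eqref{eq:WhittakerTOmega} pairs correctly with $\calL_\pi f$ by substituting the Laplace transform formula and using Fubini together with $\int_V e^{-i(x|u-u')}\,dx$ being a delta on $\Omega$.) Since $\calL_\pi$-duality is an isomorphism carrying the span of the $W^\Omega_{v,\eta}$ onto the span of the $W^{T_\Omega}_{v,\eta}$, Proposition~\ref{prop:WhittakerVectorsL2}(1) gives the claim.

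For part~\eqref{prop:WhittakerVectorsTOmega2} I would proceed identically with $\overline{W}^\Omega_{v,\eta}=\eta\circ\calJ_\pi(\cdot,v)$ in place of $W^\Omega_{v,\eta}$; the computation of its image under $\calL_\pi$-duality uses precisely the defining Laplace transform identity $\int_\Omega e^{i(z|u)}\pi(u)^{-1}\calJ_\pi(u)\Delta(u)^{-n/r}\,du=e^{-i\tr(z^{-1})}\pi(z)$ together with the $L$-equivariance of $\calJ_\pi$ to handle the base point $v$ instead of $e$, producing the function $z\mapsto e^{i(\overline{z}^{-1}|v)}\eta\circ\pi(z)=\overline{W}^{T_\Omega}_{v,\eta}$ up to a constant. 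An independent sanity check of the final formulas is to verify the $N$- and $\overline{N}$-equivariance directly from \eqref{eq:H2TOmegaActionJ}: for $n_u$ this is immediate from $T_\pi(n_u)F(z)=F(z-u)$, and for $\overline{n}_w$ one uses the $\overline{N}$-action formula together with the identity $(\overline{z}^{-1}+w)^{-1}$ appearing in the cocycle, matching the exponential $e^{i(\overline{z}^{-1}|v)}$ twisted by $\pi(z)$. The main obstacle is the convergence/interchange-of-integrals bookkeeping needed to justify that these formal manipulations actually define continuous functionals on $\calH^2_\pi(T_\Omega)^\infty$ rather than merely on a dense subspace; this is exactly the point flagged before the statement ("it is not clear a priori that they define distribution vectors"), and it is resolved for free by transporting along the isomorphism $\calL_\pi$ once the algebraic identification of the functions is in place.
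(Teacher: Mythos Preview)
Your approach for part~\eqref{prop:WhittakerVectorsTOmega1} is essentially the paper's: both compute the pairing $W^{T_\Omega}_{v,\eta}(\calL_\pi f)$ by inserting the Laplace integral, using Fubini and Fourier inversion in the $x$-variable to collapse onto $W^\Omega_{v,\eta'}(f)$ for an explicit invertible transformation $\eta\mapsto\eta'$, so that Proposition~\ref{prop:WhittakerVectorsL2}(1) yields the claim.

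For part~\eqref{prop:WhittakerVectorsTOmega2} you take a different route. You propose to transport $\overline{W}^\Omega_{v,\eta}$ across the Laplace duality directly, invoking the Bessel Laplace identity for $\calJ_\pi$ together with $L$-equivariance to shift the base point from $e$ to $v$. The paper instead bypasses the Bessel function entirely: since $j\overline{N}j^{-1}=N$ and $\overline{\psi}_v(\overline n)=\psi_v(j\overline nj^{-1})$, composition with $T_\pi(j)$ gives an isomorphism $\calH^2_\pi(T_\Omega)^{-\infty,\psi_v}\to\calH^2_\pi(T_\Omega)^{-\infty,\overline\psi_v}$, and applying the explicit formula \eqref{eq:H2TOmegaActionJ} together with the substitution $z\mapsto-z^{-1}$ to the already-established $W^{T_\Omega}_{v,\eta}$ immediately produces $\overline{W}^{T_\Omega}_{v,\eta}$. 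This is the same trick as in the proof of Proposition~\ref{prop:WhittakerVectorsL2}(2). It has two advantages over your direct computation: it avoids the convergence and interchange-of-integrals issues you flag (recall that the Bessel integral defining $\overline{W}^\Omega_{v,\eta}$ is a priori only given on $C_c^\infty$, and its extension to smooth vectors was itself obtained via $R_\pi(j)$), and it makes the structural relation $\overline{W}^{T_\Omega}_{v,\eta}=W^{T_\Omega}_{v,\eta}\circ T_\pi(j)$ transparent. Your approach would work, but it re-proves in the tube domain model what the $j$-symmetry already gives for free.
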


\begin{proof}
	It suffices to show \eqref{prop:WhittakerVectorsTOmega1}, then \eqref{prop:WhittakerVectorsTOmega2} follows by applying $T_\pi(j)$ (using \eqref{eq:H2TOmegaActionJ} and the substitution $z\mapsto-z^{-1}$) similar as in the proof of Proposition~\ref{prop:WhittakerVectorsL2}. To prove \eqref{prop:WhittakerVectorsTOmega1} we show that $W_{v,\eta}^{T_\Omega}$ is the Laplace transform of a Whittaker vector $W_{v,\eta'}^\Omega\in L^2_\pi(\Omega)^{-\infty}$ for some $\eta'\in\calV_\pi^\vee$. For $f\in L^2_\pi(\Omega)^\infty$ we have
	\begin{align*}
		& W_{v,\eta}^{T_\Omega}\big(\calL_\pi f\big)\\
		 ={}& (2\pi)^{-\frac{n}{2}}\int_{T_\Omega}\int_\Omega 	e^{-i(\overline{z}|v)}e^{i(z|u)}\eta(\pi(y)^{-1}\pi(u)^{-1}f(u))\Delta(u)^{-\frac{n}{r}}\Delta(y)^{-\frac{2n}{r}}\,du\,dz\\
		={}& (2\pi)^{-\frac{n}{2}}\int_{T_\Omega}\int_\Omega 	e^{i(x|u-v)}e^{-(y|u+v)}\eta(\pi(y)^{-1}\pi(u)^{-1}f(u))\Delta(u)^{-\frac{n}{r}}\Delta(y)^{-\frac{2n}{r}}\,du\,dz\\
		={}& (2\pi)^{\frac{n}{2}}\Delta(v)^{-\frac{n}{r}}\int_\Omega e^{-2(y|v)}\eta(\pi(y)^{-1}\pi(v)^{-1}f(v))\Delta(y)^{-\frac{2n}{r}}\,dy\\
		={}& (2\pi)^{\frac{n}{2}}\Delta(v)^{-\frac{n}{r}}\eta\left(\left(\int_\Omega e^{-2(y|v)}\pi(y)^{-1}\Delta(y)^{-\frac{2n}{r}}\,dy\right)\pi(v)^{-1}f(v)\right)\\
		={}& W_{v,\eta'}^\Omega(f),
	\end{align*}
	where
	$$ \eta'=\eta\circ\left((2\pi)^{\frac{n}{2}}\Delta(v)^{-\frac{n}{r}}\int_\Omega e^{-2(y|v)}\pi(y)^{-1}\pi(v)^{-1}\Delta(y)^{-\frac{2n}{r}}\,dy\right) \in \calV_\pi^\vee. $$
	Note that the integral converges absolutely. In fact, for $v=e$ we obtain $\eta'=(2\pi)^{\frac{n}{2}}\eta\circ\widetilde{\Gamma}_\pi$.
\end{proof}

\subsection{The bounded domain realization}\label{sec:BoundedDomainModel}

Finally, we obtain an expression for the Whittaker vectors in the bounded symmetric domain picture, i.e. in $\calH^2_\pi(D)^{-\infty}$. As for $\calH_\pi^2(T_\Omega)^{-\infty}$, we identify $\calH_\pi^2(D)^{-\infty}$ with a space of antiholomorphic functions $\Pi:D\to\calV_\pi^\vee$ by
$$ \Pi(f) = \int_D\Pi(w)\big(\pi(B(w,\overline{w}))^{-1}f(w)\big) h(w)^{-\frac{2n}{r}}\,dw \qquad (f\in\calH_\pi^2(D)^\infty). $$

For $v\in\Omega$ and $\eta\in\calV_\pi^\vee$ we define the following antiholomorphic functions on $D$ with values in $\calV_\pi^\vee$:
\begin{align}
	W^D_{v,\eta}(w) &= W^{T_\Omega}_{v,\eta}(c(w))\circ\pi(e-\overline{w}) = e^{-((e+\overline{w})(e-\overline{w})^{-1}|v)}\cdot\eta\circ\pi(e-\overline{w}),\label{eq:WhittakerND}\\
	\overline{W}^D_{v,\eta}(w) &= \overline{W}^{T_\Omega}_{v,\eta}(c(w))\circ\pi(e-\overline{w})\circ\pi(-I) = e^{-((e-\overline{w})(e+\overline{w})^{-1}|v)}\cdot\eta\circ\pi(e+\overline{w}).\label{eq:WhittakerNbarD}
\end{align}
Since $(e+\overline{w})(e-\overline{w})^{-1}=2(e-\overline{w})^{-1}-e$ and $(e-\overline{w})(e+\overline{w})^{-1}=2(e+\overline{w})^{-1}-e$ we can also write
\begin{align*}
	W^D_{v,\eta}(w) &= \const\times\,e^{-2\tr v(e-\overline{w})^{-1}}\cdot\eta\circ\pi(e-\overline{w}),\\
	\overline{W}^D_{v,\eta}(w) &= \const\times\,e^{-2\tr v(e+\overline{w})^{-1}}\cdot\eta\circ\pi(e+\overline{w}).
\end{align*}

\begin{proposition}\label{prop:WhittakerVectorsD}
	Let $v\in\Omega$.
	\begin{enumerate}
		\item\label{prop:WhittakerVectorsD1} The space $\calH^2_\pi(D)^{-\infty,\psi_v}$ is spanned by the distribution vectors $W^D_{v,\eta}$, $\eta\in\calV_\pi^\vee$.
		\item\label{prop:WhittakerVectorsD2} The space $\calH^2_\pi(D)^{-\infty,\overline{\psi}_v}$ is spanned by the distribution vectors $\overline{W}^D_{v,\eta}$, $\eta\in\calV_\pi^\vee$.
	\end{enumerate}
\end{proposition}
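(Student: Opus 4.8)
The plan is to transport the classification in the bounded domain model back to the tube domain model via the Cayley transform, exactly as the formulas \eqref{eq:WhittakerND} and \eqref{eq:WhittakerNbarD} were designed. First I would observe that the Cayley transform $p:T_\Omega\to D$ induces the unitary isomorphism $\gamma_\pi:\calH^2_\pi(T_\Omega)\to\calH^2_\pi(D)$ with inverse $\gamma_\pi^{-1}$, and hence (by duality on the level of smooth vectors) an isomorphism $(\gamma_\pi^{-1})^*:\calH^2_\pi(T_\Omega)^{-\infty}\to\calH^2_\pi(D)^{-\infty}$. Since $\gamma_\pi$ intertwines $T_\pi$ and $D_\pi$, and since conjugation by $j$ interchanges $N$ and $\overline N$ (with $j_c=-\id_D$ by \eqref{eq:DActionJ}), this isomorphism carries $\calH^2_\pi(T_\Omega)^{-\infty,\psi_v}$ onto $\calH^2_\pi(D)^{-\infty,\psi_v}$ and $\calH^2_\pi(T_\Omega)^{-\infty,\overline\psi_v}$ onto $\calH^2_\pi(D)^{-\infty,\overline\psi_v}$. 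Therefore both spaces on the right have dimension $\dim\pi$ by Proposition~\ref{prop:WhittakerVectorsTOmega}, and it remains only to check that the candidate functions $W^D_{v,\eta}$ and $\overline W^D_{v,\eta}$ are precisely the images of $W^{T_\Omega}_{v,\eta}$ and $\overline W^{T_\Omega}_{v,\eta}$ under this isomorphism (up to the overall normalization $\pi(-I)$ in the opposite case).

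The key computation is thus to make $(\gamma_\pi^{-1})^*$ explicit on antiholomorphic function representatives. Writing a distribution vector $T\in\calH^2_\pi(T_\Omega)^{-\infty}$ as the antiholomorphic function $\Pi$ via \eqref{eq:DistribVectorHolomFct} and pairing with $F=\gamma_\pi^{-1}f$, I would change variables $z=c(w)$ in the integral over $T_\Omega$, using that $F(c(w))=\pi(e-w)^{-1}\gamma_\pi^{-1}f\circ c\,(w)$ — more precisely that $\gamma_\pi^{-1}f(z)=\pi\!\left(\tfrac{z+ie}{2i}\right)f(p(z))$, so $F(c(w))=\pi\!\left(\tfrac{c(w)+ie}{2i}\right)f(w)$ — together with the transformation of the measure $\Delta(\Im z)^{-2n/r}\,dz$ under the Cayley transform (this is governed by $h(w)^{-2n/r}=\det B(w,\overline w)^{-1}$ and the relation $\Im c(w)=(e-w)^{-*}\cdot(\text{something})\cdot(e-\overline w)^{-1}$ from \cite{FK94}). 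Matching the resulting integrand against the definition of the pairing in $\calH^2_\pi(D)^{-\infty}$ shows that the function representing $(\gamma_\pi^{-1})^*T$ on $D$ is $w\mapsto \Pi(c(w))\circ\pi(e-\overline w)$, which is exactly the recipe in \eqref{eq:WhittakerND}; for the opposite case the extra $\pi(-I)$ comes from \eqref{eq:DActionJ}.

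I would then verify directly (as a sanity check, not strictly needed once the isomorphism is established) that $W^D_{v,\eta}$ satisfies the $\psi_v$-equivariance under $D_\pi(n_u)$ and that $\overline W^D_{v,\eta}$ satisfies the $\overline\psi_v$-equivariance, using the explicit action of $N$ and $\overline N$ in the bounded domain model; this also confirms that the two bounded-domain families really do lie in the respective Whittaker spaces before invoking the dimension count. Since Proposition~\ref{prop:WhittakerVectorsTOmega} already gives that these spaces are spanned by the images of $W^{T_\Omega}_{v,\eta}$ and $\overline W^{T_\Omega}_{v,\eta}$ and are $\dim\pi$-dimensional, once the identification of images is in hand the proposition follows.

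The main obstacle I expect is the bookkeeping in the change of variables under the Cayley transform: correctly tracking the Jacobian factor, the transformation of $\Delta(\Im z)^{-2n/r}\,dz$ into $h(w)^{-2n/r}\,dw$, and the automorphy factor $\pi(e-w)$ versus $\pi(e-\overline w)$ on the holomorphic and antiholomorphic sides. The relevant identities are all in \cite{FK94} (Propositions and Lemmas in Chapter X.4), so the step is routine in principle, but it is the place where signs and conjugates must be handled with care to land exactly on formulas \eqref{eq:WhittakerND} and \eqref{eq:WhittakerNbarD} rather than on some scalar multiple with the wrong argument.
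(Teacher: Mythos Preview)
Your proposal is correct and follows essentially the same route as the paper: transport the classification from $\calH^2_\pi(T_\Omega)^{-\infty}$ to $\calH^2_\pi(D)^{-\infty}$ via the Cayley transform $\gamma_\pi$, identify the image of $W^{T_\Omega}_{v,\eta}$ with $W^D_{v,\eta}$ by the pairing computation, and obtain part~\eqref{prop:WhittakerVectorsD2} from part~\eqref{prop:WhittakerVectorsD1} by applying $D_\pi(j)$. The paper's proof is terser than what you outline: rather than carrying out the full change of variables with Jacobians and automorphy factors, it simply writes out $W^D_{v,\eta}(\gamma_\pi F)$ as an integral over $D$ and invokes the fact that $\gamma_\pi$ is unitary up to a scalar $c$ to pass directly to $c\,W^{T_\Omega}_{v,\eta}(F)$, so the bookkeeping you anticipate is absorbed into that single appeal to unitarity.
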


\begin{proof}
	Since the Cayley transform $\gamma_\pi:\calH_\pi^2(T_\Omega)\to\calH_\pi^2(D)$ is a unitary (up to a scalar $c\neq0$) isomorphism, we have
	\begin{multline*}
		W^D_{v,\eta}\big(\gamma_\pi F\big) = \int_D e^{-((e+\overline{w})(e-\overline{w})^{-1}|v)}\eta\big(\pi(e-\overline{w})\pi(B(w,\overline{w}))^{-1}\gamma_\pi F(w)\big) h(w)^{-\frac{2n}{r}}\,dw\\
		= c\int_{T_\Omega} e^{-i(\overline{z}|v)}\eta\big(\pi(y)^{-1}F(z)\big)\Delta(y)^{-\frac{2n}{r}}\,dz = c\,W^{T_\Omega}_{v,\eta}(F).
	\end{multline*}
	This shows \eqref{prop:WhittakerVectorsD1}, and \eqref{prop:WhittakerVectorsD2} follows by applying $D_\pi(j)$ (see \eqref{eq:DActionJ}).
\end{proof}

\begin{remark}
	In the case of holomorphic discrete series with one-dimensional lowest $K$-type one can use the characterization of distribution vectors in $\calH_\pi^2(D)^{-\infty}$ due to Ch\'{e}bli--Faraut~\cite{CF04} to show that the antiholomorphic functions above are indeed distribution vectors and hence Whittaker vectors. With a little more effort, solving certain differential equations on $D$, it is also possible to give an independent proof of their exhaustion, hence a more direct proof of Proposition~\ref{prop:WhittakerVectorsD}. However, we remark that the results in \cite{CF04} are not available for higher-dimensional lowest $K$-types to the best of our knowledge.
\end{remark}

\section{Discrete spectrum in $L^2(G/N,\psi)$}\label{sec:DiscreteSpectrumL2GN}

For a unitary character $\psi$ of $N$ let
$$ L^2(G/N,\psi) = \left\{f:G\to\CC:f(gn)=\psi(n)^{-1}f(g),\int_{G/N}|f(g)|^2\,d(gN)<\infty\right\}. $$
We use the explicit formulas for Whittaker vectors in the $L^2$-model to obtain the holomorphic discrete series contribution to the Plancherel formula for $L^2(G/N,\psi)$ in the case where $\psi=\psi_v$, $v\in\Omega$.

\subsection{The holomorphic discrete series contribution to $L^2(G/N,\psi)$}

Recall the holomorphic discrete series $(R_\pi,L^2_\pi(\Omega))$ for an irreducible representation $(\pi,\calV_\pi)$ of $L_\CC$ with $\omega(\pi)>\frac{2n}{r}-1$. Note that by Proposition~\ref{prop:WhittakerVectorsL2}, the space $L^2_\pi(\Omega)^{-\infty,\psi_v}$ of Whittaker vectors on $\pi$ is finite-dimensional. We obtain a family of embeddings of the smooth vectors $L^2_\pi(\Omega)^\infty$ into
$$ C^\infty(G/N,\psi_v) = \{f:G\to\CC\mbox{ smooth}:f(gn)=\psi_v(n)^{-1}f(g)\} $$
in terms of matrix coefficients:
\begin{equation}
	\iota_\pi:L^2_\pi(\Omega)^\infty\otimes L^2_\pi(\Omega)^{-\infty,\psi_v}\to C^\infty(G/N,\psi_v), \quad \iota_\pi(f\otimes W)(g) = W\big(R_\pi(g)^{-1}f\big).\label{eq:DefIotaPi}
\end{equation}
Clearly, $\iota_\pi$ is $G$-equivariant with respect to the action of $G$ on $L^2_\pi(\Omega)^\infty\otimes L^2_\pi(\Omega)^{-\infty,\psi_v}$ by $R_\pi$ on the first factor and the trivial representation on the second factor.

\begin{theorem}\label{thm:HolDSContribution}
	For every irreducible representation $(\pi,\calV_\pi)$ of $L_\CC$ with $\omega(\pi)>\frac{2n}{r}-1$, the map $\iota_\pi$ extends to a $G$-equivariant embedding
	$$ \iota_\pi:L^2_\pi(\Omega)\otimes L^2_\pi(\Omega)^{-\infty,\psi_v}\hookrightarrow L^2(G/N,\psi_v). $$
	Moreover, the map
	$$ \bigoplus_\pi\iota_\pi:\bigoplus_\pi L^2_\pi(\Omega)\otimes L^2_\pi(\Omega)^{-\infty,\psi_v}\hookrightarrow L^2(G/N,\psi_v) $$
	constructs the complete holomorphic discrete series contribution to the Plancherel formula for $L^2(G/N,\psi_v)$.
\end{theorem}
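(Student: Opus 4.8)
The plan is to prove the theorem in two parts: first the boundedness/isometry statement for a single $\pi$, and then the orthogonality and exhaustion statements across different $\pi$. The central observation is that everything reduces to the $L^2$-model, where the action of $N$ is by the explicit multiplication operators in \eqref{eq:L2ActionN}, the Whittaker vectors are the Dirac distributions $W^\Omega_{v,\eta}$ of \eqref{eq:WhittakerNL2}, and $G/N$ can be understood concretely. The key computational engine will be to choose a section of $G\to G/N$ adapted to the Gelfand--Naimark decomposition---writing $g$ (modulo $N$) in terms of $\overline{N}L$---so that the integral over $G/N$ becomes an integral over $\overline{N}\times L$ with an explicit invariant measure, and then to evaluate the matrix coefficient $W^\Omega_{v,\eta}(R_\pi(g)^{-1}f)$ using \eqref{eq:L2ActionN}, \eqref{eq:L2ActionL} and \eqref{eq:L2ActionJ}.

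\textbf{Step 1: Reduce to a single Whittaker vector and to $v=e$.} Since $L^2_\pi(\Omega)^{-\infty,\psi_v}$ is finite-dimensional, it suffices to prove the boundedness of $\iota_\pi(\,\cdot\,\otimes W)$ for each fixed $W=W^\Omega_{v,\eta}$, and linearity handles the rest. Using the action of $L\subseteq G$ (which acts transitively on $\Omega$) one can further conjugate to reduce to the case $v=e$, at the cost of an explicit Jacobian factor; alternatively one just carries $v$ along. Write $f\in L^2_\pi(\Omega)^\infty$, so $\iota_\pi(f\otimes W)(g)=\eta\big((R_\pi(g)^{-1}f)(v)\big)$.

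\textbf{Step 2: Compute the $G/N$-integral via an explicit section.} Parametrize $G/N$ using $\overline{N}\cdot L$ (a dense open subset, the complement having measure zero). For $g=\overline{n}_t\cdot\ell$ with $t\in V$, $\ell\in L$, compute $R_\pi(g)^{-1}$ acting on $f$ and evaluate at $v$. The $N$-character $\psi_v$ is absorbed so the integrand is genuinely a function on $G/N$. The Haar measure decomposes as $d\dot g = \text{(explicit Jacobian)}\,dt\,d\ell$. The inner integral over $t$ should, after using \eqref{eq:L2ActionN} and the concrete form of the inversion action via the operator-valued Bessel function $\calJ_\pi$, collapse by Fourier inversion / the Plancherel theorem on the abelian group $\overline{N}\cong V$ to an $L^2(\Omega)$-type integral; the remaining integral over $L$ then reproduces the norm of $f$ in $L^2_\pi(\Omega)$ up to the constant $1/d(\sigma)$, using the $L$-equivariance of the measure $\Delta(u)^{-n/r}du$ and of the Bessel function. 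This is where the identities for $\calJ_\pi$ recalled after \eqref{eq:L2ActionJ}---its $L$-equivariance and its Laplace transform---do the heavy lifting, and it is the main obstacle: one must carefully track all the $\pi$-intertwiners ($\pi(u^{1/2})$, $\widetilde\Gamma_\pi$, $\pi(\ell^*)$) and Jacobian powers of $\Delta$ so that they assemble into exactly the norm defining $L^2_\pi(\Omega)$ and the intrinsic inner product $\langle W,W'\rangle_{\calH^{-\infty,\psi}_\sigma}$ of \eqref{eq:DefInnerProductOnWhittakerVectors}. The finiteness of the resulting constant, and its positivity, give simultaneously that $\iota_\pi$ extends continuously to all of $L^2_\pi(\Omega)$ and is a (scalar multiple of an) isometry, hence injective with closed image.

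\textbf{Step 3: Orthogonality and exhaustion.} For $\pi\neq\pi'$, the images of $\iota_\pi$ and $\iota_{\pi'}$ are orthogonal in $L^2(G/N,\psi_v)$: compute $\int_{G/N}\iota_\pi(f\otimes W)\overline{\iota_{\pi'}(f'\otimes W')}\,d\dot g$ by the same section and observe that the inner $\overline{N}$-integral now pairs two Bessel-type kernels attached to inequivalent $L_\CC$-representations; using the Schur-type orthogonality built into \eqref{eq:L2ActionJ} (or, abstractly, that the discrete series are inequivalent and the Whittaker functionals realize distinct subrepresentations) the cross term vanishes. For exhaustion, i.e.\ that $\bigoplus_\pi\operatorname{im}\iota_\pi$ is \emph{all} of the holomorphic discrete series part of $L^2(G/N,\psi_v)$: any holomorphic discrete series representation $\sigma\cong R_\pi$ embedded in $L^2(G/N,\psi_v)$ gives, by Frobenius reciprocity / the general theory of the cyclic vector $\delta_eN$, a nonzero Whittaker functional $W\in\calH^{-\infty,\psi_v}_\sigma$, and the embedding must coincide with $\iota_\pi(\,\cdot\,\otimes W)$ up to scalar because a discrete series intertwiner into $L^2(G/N,\psi)$ is determined by the induced Whittaker functional (matrix-coefficient map); by Proposition~\ref{prop:WhittakerVectorsL2} all such $W$ are accounted for, and the multiplicity equals $\dim\pi$. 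Combined with the temperedness remark (so that only discrete series, not continuous spectrum, can contribute a closed holomorphic-type subspace), this shows the sum over $\pi$ of the $\iota_\pi$ captures exactly the holomorphic discrete series isotypic part.

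\textbf{Main obstacle.} The crux is Step 2: extracting, from the iterated integral over $\overline{N}\times L$ against the explicit formulas \eqref{eq:L2ActionN}--\eqref{eq:L2ActionJ}, the precise constant $d(\sigma)$ and the precise inner product on Whittaker vectors---in particular showing the inner $\overline{N}$-integral converges (it is an oscillatory/Bessel integral, not absolutely convergent, so one argues via Plancherel on $V$ applied to the smooth, rapidly-decaying vector $f$) and that the outcome is finite and $\pi$-independent in its structural form. Everything else (equivariance, reduction to $v=e$, orthogonality, exhaustion) is comparatively formal once this normalization computation is in hand.
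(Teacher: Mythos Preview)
Your strategy is viable but takes a considerably harder route than the paper. The paper does \emph{not} use the Gelfand--Naimark section $\overline{N}L$ nor the Bessel kernel $\calJ_\pi$; instead it uses the Iwasawa-type decomposition $G=KLN$ and the integral formula $\int_{G/N}\varphi\,d\dot g=\int_K\int_L\varphi(kl)\,l^{2\rho}\,dl\,dk$. The key simplification you miss is that one only needs to check square-integrability for a \emph{single} nonzero vector $f$ (irreducibility of $R_\pi$ then forces the matrix-coefficient map to be a scalar multiple of an isometry on all of $L^2_\pi(\Omega)$), and the paper takes $f=f_\xi(u)=e^{-\tr(u)}\xi$ in the lowest $K$-type. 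For such $f$ the $K$-action is the explicit finite-dimensional action $\pi_c(k)$ of Lemma~\ref{lem:ActionLKT}, so the matrix coefficient becomes the elementary expression $\iota_\pi(f_\xi\otimes W_{v,\eta}^\Omega)(kl)=\eta\big(\pi(l^*)\pi_c(k^{-1})\xi\big)e^{-\tr(l^{-*}v)}$. Schur orthogonality over the compact group $K$ disposes of the $K$-integral, and the remaining $L$-integral is a Gamma-type integral whose finiteness is exactly the discrete-series condition $\omega(\pi)>\frac{2n}{r}-1$ (via \cite{DG93}). No oscillatory Bessel integrals appear at all.

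Your $\overline{N}L$ route, by contrast, attempts to prove Theorem~\ref{thm:HolDSContribution} and the formal-dimension formula (Theorem~\ref{thm:FormalDimension}) simultaneously for a general smooth vector; the paper separates these, and even for Theorem~\ref{thm:FormalDimension} it again uses the $KL$-section and the lowest $K$-type (see Lemma~\ref{lem:NormLowestKType}). Your Step~2 can in principle be made to work---writing $R_\pi(\overline{n}_t)^{-1}=R_\pi(j)R_\pi(n_{-t})R_\pi(j)^{-1}$ one does obtain a genuine Fourier integral in $t$, and Plancherel on $V$ together with unitarity of $R_\pi(j)$ collapses it---but the bookkeeping of the operators $\pi(u^{1/2})$, $\widetilde\Gamma_\pi$, $\calJ_\pi$ and the various $\Delta$-powers is significantly more delicate than what the paper needs. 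For exhaustion, your Frobenius-reciprocity argument is exactly the paper's; the orthogonality of inequivalent discrete-series components is automatic from general Plancherel theory and does not require a separate Bessel computation.
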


\begin{proof}
	By Frobenius reciprocity, every embedding of $L^2_\pi(\Omega)^\infty$ into $C^\infty(G/N,\psi_v)$ is given in terms of matrix coefficients with respect to a generalized Whittaker vector, so the holomorphic discrete series contribution to $L^2(G/N,\psi_v)$ can be at most what is claimed. Hence, we only need to show that every $\iota_\pi$ indeed extends to an embedding of $L^2_\pi(\Omega)\otimes L^2_\pi(\Omega)^{-\infty,\psi_v}$ into $L^2(G/N,\psi_v)$. For this it suffices to show that for every $W\in L^2_\pi(\Omega)^{-\infty,\psi_v}$ there exists a vector $0\neq f\in L^2_\pi(\Omega)^\infty$ such that $\iota_\pi(f\otimes W)\in L^2(G/N,\psi)$. We use the following integral formula for $G/N$ (see e.g. \cite[Chapter V, \S6]{Kna86}):
	$$ \int_{G/N}\varphi(gN)\,d(gN) = \int_K\int_L \varphi(klN)l^{2\rho}\,dl\,dk, $$
	where $l^{2\rho}=\Delta(l\cdot e)^{\frac{n}{r}}$. By Proposition~\ref{prop:WhittakerVectorsL2}, every Whittaker vector $W$ is of the form $W=W_{v,\eta}^\Omega$ for some $\eta\in\calV_\pi^\vee$. The lowest $K$-type of $L^2_\pi(\Omega)$ is constituted of the functions $f_\xi(u)=e^{-\tr(u)}\xi$ for $\xi\in\calV_\pi$. For these functions we have by Lemma~\ref{lem:ActionLKT} and \eqref{eq:L2ActionL}:
	$$ \iota_\pi(f_\xi\otimes W_{v,\eta}^\Omega)(kl) = W_{v,\eta}^\Omega\big(R_\pi(l^{-1}k^{-1})f_\xi\big) = \eta\big(\pi(l^*)\pi_c(k^{-1})\xi\big) e^{-\tr(l^{-*}v)}. $$
	Therefore, we obtain
	$$ \int_{G/N}|\iota_\pi(f_\xi\otimes W_{v,\eta}^\Omega)(g)|^2\,d(gN) = \int_K\int_L|\eta\big(\pi(l^*)\pi_c(k^{-1})\xi\big)|^2 e^{-2\tr(l^{-*}v)}\Delta(l\cdot e)^{\frac{n}{r}}\,dl\,dk. $$
	The inner integral can be evaluated using Schur's orthogonality relations:
	$$ = \frac{1}{\dim(\pi)}\|\xi\|^2\int_L\|\pi^\vee(l^{-*})\eta\|^2e^{-2\tr(l^{-*}\cdot v)}\Delta(l\cdot e)^{\frac{n}{r}}\,dl, $$
	where we use the norm on $\calV_\pi^\vee$ induced from the norm on $\calV_\pi$. Substituting $l\mapsto l^{-*}$ yields
	$$ = \frac{1}{\dim(\pi)}\|\xi\|^2\int_L\|\pi^\vee(l)\eta\|^2e^{-2\tr(l\cdot v)}\Delta(l\cdot e)^{-\frac{n}{r}}\,dl. $$
	By \cite[proof of Theorem 2.5, especially (5)]{DG93}, the integral is finite if and only if $\omega(\pi^\vee)>\frac{2n}{r}-1$. The statement now follows since $(\pi^\vee,\calV_\pi^\vee)$ has the same lowest restricted weight as $(\pi,\calV_\pi)$.
\end{proof}

\subsection{The formal dimension}

In this section we put for simplicity $v=e$, i.e. $\psi(n_u)=e^{-i(u|e)}=e^{-i\tr(u)}$. Recall the embeddings
$$ \iota_\pi:L^2_\pi(\Omega)\otimes L^2_\pi(\Omega)^{-\infty,\psi}\hookrightarrow L^2(G/N,\psi), \quad \iota_\pi(f\otimes W)(g) = W\big(R_\pi(g)^{-1}f\big). $$
In this section we determine the precise inner product on the space $L^2_\pi(\Omega)^{-\infty,\psi}$ of Whittaker vectors which makes $\iota_\pi$ unitary.

Recall that $\eta\in\calV_\pi^\vee$ parameterizes the space $L^2_\pi(\Omega)^{-\infty,\psi}$ by $\eta\mapsto W_{e,\eta}^\Omega$. We introduce a natural inner product on $L^2_\pi(\Omega)^{-\infty,\psi}$. In fact, if one formally evaluates the inner product on $L^2_\pi(\Omega)$ given by
$$ (f,f')\mapsto\int_\Omega\langle\widetilde{\Gamma}_\pi\pi(u^{\frac{1}{2}})^{-1}f(u),\pi(u^{\frac{1}{2}})^{-1}f(u)\rangle\Delta(u)^{-\frac{n}{r}}\,du $$
at $f=W_{e,\eta}^\Omega=\delta_e\otimes\eta\in\calE'(\Omega)\otimes\calV_\pi^\vee\subseteq L^2_\pi(\Omega)^{-\infty}$ and $f'=W_{e,\eta'}^\Omega$, then one obtains the inner product
\begin{equation}
	\langle W_{e,\eta}^\Omega,W_{e,\eta'}^\Omega\rangle_{L^2_\pi(\Omega)^{-\infty,\psi}} := \langle\widetilde{\Gamma}_\pi^*\eta,\eta'\rangle_{\calV_\pi^\vee},\label{eq:DefInnerProductOnWhittakerVectors}
\end{equation}
where $\widetilde{\Gamma}_\pi^*\eta=\eta\circ\widetilde{\Gamma}_\pi\in\calV_\pi^\vee$ and $\widetilde{\Gamma}_\pi$ denotes the operator-valued gamma function as defined in \eqref{eq:DefGammaFunction}.

\begin{theorem}[Formal dimension]\label{thm:FormalDimension}
	\begin{enumerate}
		\item\label{thm:FormalDimension1} For every holomorphic discrete series representation $(R_\pi,L^2_\pi(\Omega))$ there exists a constant $d(\pi)>0$ called the \emph{formal dimension} such that
		$$ \int_{G/N}W\big(R_\pi(g)^{-1}f\big)\overline{W'\big(R_\pi(g)^{-1}f'\big)}\,d(gN) = \frac{1}{d(\pi)}\langle f,f'\rangle_{L^2_\pi(\Omega)}\langle W,W'\rangle_{L^2_\pi(\Omega)^{-\infty,\psi}} $$
		for all $f,f'\in L^2_\pi(\Omega)$ and $W,W'\in L^2_\pi(\Omega)^{-\infty,\psi}$.
		\item\label{thm:FormalDimension2} The map $\sqrt{d(\pi)}\iota_\pi:L^2_\pi(\Omega)\otimes L^2_\pi(\Omega)^{-\infty,\psi}\to L^2(G/N,\psi)$ is unitary.
		\item\label{thm:FormalDimension3} If $-\frac{1}{2}\sum_{j=1}^rm_j\gamma_j$ denotes the lowest restricted weight of $\pi$ and we write ${\bf m}=(m_1,\ldots,m_r)$, then
		\begin{equation}
			d(\pi) = \const\times\frac{4^{m_1+\cdots+m_r}\dim(\pi)}{\Gamma_\Omega({\bf m})\Gamma_\Omega({\bf m}-\frac{n}{r})},\label{eq:FormulaFormalDimension}
		\end{equation}
		where the constant only depends on the dimension $n=\dim V$, the rank $r=\operatorname{rk}V$ and the structure constant $d$, and $\Gamma_\Omega$ denotes the gamma function of the symmetric cone:
		$$ \Gamma_\Omega({\bf s}) = (2\pi)^{\frac{n-r}{2}}\prod_{j=1}^r\Gamma\left(s_j-(j-1)\frac{d}{2}\right). $$
	\end{enumerate}
\end{theorem}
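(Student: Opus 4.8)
The plan is to prove (1) and (2) simultaneously by a Schur-type argument that reduces everything to one explicit matrix-coefficient computation, and then to extract the constant in (3) from that computation together with the Ding--Gross evaluation of gamma integrals over $\Omega$.

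\emph{Parts (1) and (2).} By Theorem~\ref{thm:HolDSContribution}, $\iota_\pi$ extends to a bounded $G$-equivariant map $L^2_\pi(\Omega)\otimes L^2_\pi(\Omega)^{-\infty,\psi}\hookrightarrow L^2(G/N,\psi)$, intertwining $R_\pi$ on the first factor with the left-regular representation. For fixed $W,W'$ the map $(f,f')\mapsto\langle\iota_\pi(f\otimes W),\iota_\pi(f'\otimes W')\rangle_{L^2(G/N,\psi)}$ is a bounded $R_\pi$-
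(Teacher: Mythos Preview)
Your proposal is cut off mid-sentence, but the plan you outline is precisely the paper's approach: irreducibility of $R_\pi$ plus Schur's lemma forces the sesquilinear form $(f,f')\mapsto\langle\iota_\pi(f\otimes W),\iota_\pi(f'\otimes W')\rangle$ to be a scalar multiple $c(W,W')$ of $\langle f,f'\rangle_{L^2_\pi(\Omega)}$, and then one evaluates $c(W,W')$ by testing on lowest $K$-type vectors $f_\xi$ and recognizing the resulting integral over $\Omega$ as $(\widetilde\Gamma_\pi^*\eta\mid\eta')$ via the Ding--Gross formulas. The one ingredient you do not name explicitly is the computation of $\|f_\xi\|^2_{L^2_\pi(\Omega)}$ itself (Lemma~\ref{lem:NormLowestKType} in the paper), which is a separate double gamma-integral over $\Omega\times\Omega$ and is where the factor $\Gamma_\Omega(\mathbf m)\Gamma_\Omega(\mathbf m-\tfrac{n}{r})$ in \eqref{eq:FormulaFormalDimension} arises; without it you cannot extract $d(\pi)$ from the matrix-coefficient side alone.
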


To prove this, we first compute explicitly the norm of a function in the lowest $K$-type of $L^2_\pi(\Omega)$. Recall that the lowest $K$-type consists of the functions $f_\xi(u)=e^{-\tr(u)}\xi$, $\xi\in\calV_\pi$.

\begin{lemma}\label{lem:NormLowestKType}
	For $\xi\in\calV_\pi$ we have
	$$ \|f_\xi\|_{L^2_\pi(\Omega)}^2 = 2^{n-2(m_1+\cdots+m_r)}\Gamma_\Omega({\bf m})\Gamma_\Omega({\bf m}-\tfrac{n}{r})\|\xi\|_{\calV_\pi}^2. $$
\end{lemma}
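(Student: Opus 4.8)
The plan is to compute the integral
$$
\|f_\xi\|_{L^2_\pi(\Omega)}^2 = \int_\Omega\langle\widetilde{\Gamma}_\pi\pi(u^{1/2})^{-1}f_\xi(u),\pi(u^{1/2})^{-1}f_\xi(u)\rangle\,\Delta(u)^{-\frac{n}{r}}\,du
$$
directly by plugging in $f_\xi(u)=e^{-\tr(u)}\xi$. First I would observe that $\pi(u^{1/2})^{-1}=\pi(P(u^{1/2}))^{-1}$ and, using the multiplicative behavior of $\pi$ on the structure group together with the fact that $P(u^{1/2})^2=P(u)$ (and more precisely $P(u^{1/2})=P(u)^{1/2}$ on the cone), rewrite the integrand as $e^{-2\tr(u)}\langle\widetilde{\Gamma}_\pi\pi(u)^{-1}\xi,\xi\rangle$ up to reorganizing which $\pi(u^{1/2})$ sits where; since $\widetilde{\Gamma}_\pi$ is $L\cap K$-equivariant and symmetric positive definite this bookkeeping is routine. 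Thus the problem reduces to evaluating
$$
\int_\Omega e^{-2\tr(u)}\langle\widetilde{\Gamma}_\pi\pi(u)^{-1}\xi,\xi\rangle\,\Delta(u)^{-\frac{n}{r}}\,du.
$$

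Next I would recognize that, after the obvious scaling $u\mapsto u$, this is essentially the defining integral \eqref{eq:DefGammaFunction} for $\widetilde{\Gamma}_\pi$ itself, but with the exponent $-\frac{n}{r}$ in place of $-\frac{2n}{r}$ — i.e. it is the ``Gindikin gamma integral'' for the representation $\pi$ at a shifted parameter. Concretely, I would relate $\int_\Omega e^{-2\tr(u)}\pi(u)^{-1}\Delta(u)^{-\frac{n}{r}}\,du$ to the scalar gamma function $\Gamma_\Omega$ of the cone by decomposing with respect to the Jordan frame, using \eqref{eq:LowestWeightSpaceAction} which says $\pi(\sum\lambda_je_j)\xi=\prod\lambda_j^{-m_j}\xi$ on a lowest weight vector, and using the Gindikin–Koecher formula $\int_\Omega e^{-\tr(u)}\Delta(u)^{s-\frac{n}{r}}\,du=\Gamma_\Omega(\mathbf s)$. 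The factor $e^{-2\tr(u)}$ versus $e^{-\tr(u)}$ produces the power of $2$: rescaling $u\mapsto\tfrac12 u$ contributes $2^{-n}\cdot 2^{\sum(\cdots)}$, and combined with the $2^{-m_1-\cdots-m_r}$ already visible in the scalar formula for $\widetilde{\Gamma}_\pi$ quoted after \eqref{eq:DefGammaFunction}, this assembles into the claimed prefactor $2^{n-2(m_1+\cdots+m_r)}$. The two gamma factors $\Gamma_\Omega(\mathbf m)$ and $\Gamma_\Omega(\mathbf m-\tfrac{n}{r})$ arise respectively from the $\Delta(u)^{-\frac{n}{r}}$ integration here and from the $\Delta(u)^{-\frac{2n}{r}}$ integration hidden inside $\widetilde{\Gamma}_\pi$.

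The remaining point is that $\xi$ need not be a lowest weight vector, so \eqref{eq:LowestWeightSpaceAction} does not apply verbatim to a general $\xi\in\calV_\pi$. Here I would exploit $\widetilde{\Gamma}_\pi\in\End(\calV_\pi)^{L\cap K}$: averaging $\pi(u)^{-1}$ over the compact group $L\cap K$ (which acts transitively on the space of Jordan frames, equivalently on the appropriate flag) turns $\langle\widetilde{\Gamma}_\pi\pi(u)^{-1}\xi,\xi\rangle$ into something computable via Schur-type orthogonality, reducing the general $\xi$ to the lowest weight case up to the factor $\|\xi\|^2$ — exactly the mechanism already used in the proof of Theorem~\ref{thm:HolDSContribution}. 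I expect this averaging/irreducibility step, i.e. carefully justifying that the $\xi$-dependence collapses to $\|\xi\|_{\calV_\pi}^2$ and that one may evaluate on a lowest weight vector, to be the main obstacle; the cone integral itself is a standard Gindikin gamma computation once the reduction is in place. Finally I would collect the constants, absorbing the frame-volume and $(2\pi)$-type factors into the universal constant in front of $\Gamma_\Omega$, to match the stated identity.
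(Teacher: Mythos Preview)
There is a genuine gap in your reduction step. You claim that
\[
\langle\widetilde{\Gamma}_\pi\pi(u^{1/2})^{-1}\xi,\pi(u^{1/2})^{-1}\xi\rangle
= \langle\widetilde{\Gamma}_\pi\pi(u)^{-1}\xi,\xi\rangle
\]
``up to reorganizing which $\pi(u^{1/2})$ sits where'', citing the $L\cap K$-invariance of $\widetilde{\Gamma}_\pi$. But this reorganization requires $\widetilde{\Gamma}_\pi$ to commute with $\pi(u^{1/2})^{-1}=\pi(P(u^{1/2}))^{-1}$, and for $u\in\Omega$ the element $P(u^{1/2})$ lies in the noncompact part of $L$, not in $L\cap K$. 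The condition $\widetilde{\Gamma}_\pi\in\End(\calV_\pi)^{L\cap K}$ gives you nothing here; in general $\widetilde{\Gamma}_\pi$ is not a scalar (only when $\pi|_{L\cap K}$ is irreducible), so the commutation fails and the integrand you write down is simply not the correct one. Everything downstream --- recognizing a shifted Gindikin integral, reading off the two $\Gamma_\Omega$ factors --- is built on this incorrect expression.

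The paper avoids this trap by \emph{not} trying to commute $\widetilde{\Gamma}_\pi$ past $\pi(u^{1/2})^{-1}$. Instead it inserts the defining integral \eqref{eq:DefGammaFunction} for $\widetilde{\Gamma}_\pi$, obtaining a double integral with integrand $\langle\pi(u^{1/2})^{-1}\pi(v)^{-1}\pi(u^{1/2})^{-1}\xi,\xi\rangle$, and then uses the structure-group identity $P(u^{1/2})P(v)P(u^{1/2})=P(P(u^{1/2})v)$ to collapse the three operators into $\pi(P(u^{1/2})v)^{-1}$. The substitution $x=P(u^{1/2})v$ decouples the variables, and on a lowest weight vector one has $\langle\pi(x)^{-1}\xi,\xi\rangle=\Delta_{\bf m}(x)\|\xi\|^2$ by \cite[Proposition VI.3.10]{FK94}; two applications of \cite[Proposition VII.1.2]{FK94} then give the exact constant. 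Your proposed route, even if patched, would at best produce the answer up to the unspecified ``$\const$'' in the scalar formula after \eqref{eq:DefGammaFunction}. Also, the reduction to a lowest weight vector is much simpler than the $L\cap K$-averaging you outline: since $\xi\mapsto f_\xi$ is a $K$-isomorphism onto the (irreducible) lowest $K$-type, Schur's lemma gives $\|f_\xi\|^2=c(\pi)\|\xi\|^2$ immediately, and one may evaluate $c(\pi)$ on any convenient vector.
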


\begin{proof}
	Since the lowest $K$-type is irreducible, there exists a constant $c(\pi)>0$ such that
	$$ \|f_\xi\|_{L^2_\pi(\Omega)}^2 = c(\pi)\cdot\|\xi\|_{\calV_\pi}^2 \qquad \mbox{for all }\xi\in\calV_\pi. $$
	We find $c(\pi)$ by computing $\|f_\xi\|_{L^2_\pi(\Omega)}^2$ for a lowest weight vector $\xi\in\calV_\pi$. By definition of the norm in $L^2_\pi(\Omega)$ and the gamma function $\widetilde{\Gamma}_\pi$ we have
	\begin{align*}
		\|f_\xi\|_{L^2_\pi(\Omega)}^2 &= \int_\Omega\langle\widetilde{\Gamma}_\pi\pi(u^{\frac{1}{2}})^{-1}f_\xi(u),\pi(u^{\frac{1}{2}})^{-1}f_\xi(u)\rangle\Delta(u)^{-\frac{n}{r}}\,du\\
		&= \int_\Omega\langle\widetilde{\Gamma}_\pi\pi(u^{\frac{1}{2}})^{-1}\xi,\pi(u^{\frac{1}{2}})^{-1}\xi\rangle e^{-2\tr(u)}\Delta(u)^{-\frac{n}{r}}\,du\\
		&= \int_\Omega\langle\pi(u^{\frac{1}{2}})^{-1}\widetilde{\Gamma}_\pi\pi(u^{\frac{1}{2}})^{-1}\xi,\xi\rangle e^{-2\tr(u)}\Delta(u)^{-\frac{n}{r}}\,du\\
		&= \int_\Omega\int_\Omega\langle\pi(u^{\frac{1}{2}})^{-1}\pi(v)^{-1}\pi(u^{\frac{1}{2}})^{-1}\xi,\xi\rangle e^{-2\tr(u)-2\tr(v)}\Delta(u)^{-\frac{n}{r}}\Delta(v)^{-\frac{2n}{r}}\,dv\,du\\
		&= \int_\Omega\int_\Omega\langle\pi(P(u^{\frac{1}{2}})v)^{-1}\xi,\xi\rangle e^{-2\tr(u)-2\tr(v)}\Delta(u)^{-\frac{n}{r}}\Delta(v)^{-\frac{2n}{r}}\,dv\,du.
		\intertext{By the substitution $x=P(u^{\frac{1}{2}})v$ with $dx=\Delta(u)^{\frac{n}{r}}\,dv$ and $\Delta(x)=\Delta(u)\Delta(v)$ we find}
		&= \int_\Omega\int_\Omega\langle\pi(x)^{-1}\xi,\xi\rangle e^{-2\tr(u)-2(u^{-1}|x)}\Delta(x)^{-\frac{2n}{r}}\,du\,dx. 
	\end{align*}
	Now, if $\xi\in\calV_\pi$ is a lowest restricted weight vector, then by definition of the lowest restricted weight space, we have for $n\in\overline{N}_L$:
	$$ \langle\pi(n\cdot x)^{-1}\xi,\xi\rangle = \langle\pi(x)^{-1}\pi(n)^{-1}\xi,\pi(n)^{-1}\xi\rangle = \langle\pi(x)^{-1}\xi,\xi\rangle $$
	and for $x=\sum_{j=1}^r\lambda_je_j$ we get by \eqref{eq:LowestWeightSpaceAction}:
	$$ \langle\pi(x)^{-1}\xi,\xi\rangle = \lambda_1^{m_1}\cdots\lambda_r^{m_r}\|\xi\|^2. $$
	It follows from \cite[Proposition VI.3.10]{FK94} that	
	$$ \langle\pi(x)^{-1}\xi,\xi\rangle = \Delta_{\bf m}(x)\|\xi\|^2, $$
	where $\Delta_{\bf m}(x)$ is the generalized power function of exponent ${\bf m}=(m_1,\ldots,m_r)$ (see \cite[Chapter VII.1]{FK94} for details). This implies
	\begin{align*}
		\|f_\xi\|_{L^2_\pi(\Omega)}^2 &= \|\xi\|_{\calV_\pi}^2\int_\Omega\int_\Omega\Delta_{\bf m}(x) e^{-2\tr(u)-2(u^{-1}|x)}\Delta(x)^{-\frac{2n}{r}}\,du\,dx.
		\intertext{Interchanging the integrals and computing first the integral over $x$ and afterwards the integral over $u$ using \cite[Proposition VII.1.2]{FK94} gives}
		&= 2^{n-(m_1+\cdots+m_r)}\Gamma_\Omega({\bf m}-\tfrac{n}{r})\|\xi\|_{\calV_\pi}^2\int_\Omega e^{-2\tr(u)}\Delta_{{\bf m}-\frac{n}{r}}(u)\,du\\
		&= 2^{n-2(m_1+\cdots+m_r)}\Gamma_\Omega({\bf m})\Gamma_\Omega({\bf m}-\tfrac{n}{r})\|\xi\|_{\calV_\pi}^2.\qedhere
	\end{align*}
\end{proof}

\begin{proof}[{Proof of Theorem~\ref{thm:FormalDimension}}]
	Since $L_\pi^2(\Omega)$ is irreducible, there exists for every $\eta\in\calV_\pi^\vee$ a constant $d(\pi,\eta)>0$ such that
	$$ \|\iota_\pi(f\otimes W_{e,\eta}^\Omega)\|_{L^2(G/N,\psi)}^2 = \frac{1}{d(\pi,\eta)}\|f\|_{L^2_\pi(\Omega)}^2 \qquad \mbox{for all }\eta\in\calV_\pi^\vee. $$
	Consider this identity for $f$ in the lowest $K$-type, i.e. $f=f_\xi$ for some $\xi\in\calV_\pi$. The right hand side was computed in Lemma~\ref{lem:NormLowestKType}. We now compute the left hand side. By the proof of Theorem~\ref{thm:HolDSContribution} we have
	$$ \int_{G/N} |\iota_\pi(f_\xi\otimes W_{e,\eta}^\Omega)(g)|^2\,d(gN) = \frac{1}{\dim(\pi)}\|\xi\|^2\int_L\|\pi^\vee(l)\eta\|^2e^{-2\tr(l\cdot e)}\Delta(l\cdot e)^{-\frac{n}{r}}\,dl. $$
	Using the integral formula \cite[Section 1.10]{DG93} this equals
	$$ = \frac{1}{\dim(\pi)}\|\xi\|^2\int_\Omega\|\pi^\vee(P(x^{\frac{1}{2}}))\eta\|^2e^{-2\tr(x)}\Delta(x)^{-\frac{2n}{r}}\,dx. $$
	Now,
	$$ \|\pi^\vee(P(x^{\frac{1}{2}}))\eta\|^2 = (\pi^\vee(P(x^{\frac{1}{2}}))\eta|\pi^\vee(P(x^{\frac{1}{2}}))\eta) = (\pi^\vee(P(x))\eta|\eta), $$
	so we find
	\begin{align*}
		&= \frac{1}{\dim(\pi)}\|\xi\|^2\int_\Omega(\pi^\vee(x)\eta|\eta)e^{-2\tr(x)}\Delta(x)^{-\frac{2n}{r}}\,dx\\
		&= \frac{1}{\dim(\pi)}\|\xi\|^2\int_\Omega(\eta\circ\pi(x)^{-1}|\eta)e^{-2\tr(x)}\Delta(x)^{-\frac{2n}{r}}\,dx\\
		&= \frac{1}{\dim(\pi)}\|\xi\|^2(\eta\circ\widetilde{\Gamma}_\pi|\eta) = \frac{1}{\dim(\pi)}\|\xi\|^2(\widetilde{\Gamma}_\pi^*\eta|\eta).
	\end{align*}
	The result now follows from Lemma~\ref{lem:NormLowestKType} and polarization.
\end{proof}

\section{Alternative construction of Whittaker embeddings}\label{sec:NKP}

In this section we give a different approach to explicit embeddings $\calH_\pi^2\hookrightarrow L^2(G/N,\psi)$ of a unitary highest weight representation $\calH_\pi^2$. We first show that $G\subseteq P^\pm K_\CC N_\CC$
(Theorem \ref{thm:Decomp2}) via an $\SU(1,1)$-reduction and then show that 
the integral kernel of an intertwining operator from a unitary highest weight representation into $L^2(G/N,\psi)$ is given in terms of 
the decomposition of an element $g\in G$ as $g=p^\pm k n $ with $n\in N_\CC$, $k\in K_\CC$, $p^\pm \in P^\pm$, 
see Remark \ref{rem:55},
Proposition \ref{prop:EmbeddingKernelPsi}, and Theorem \ref{thm:LKTonG/N}. This is 
in the spirit of \cite{OO88,OO91} where a similar inclusion $G\subseteq P^\pm K_\CC H_\CC $ was derived for an affine symmetric space $G/H$. Finally, we use the explicit formulas for Whittaker embeddings to conclude that 
Theorem~\ref{thm:HolDSContribution} not only constructs the complete holomorphic discrete series 
contribution but the complete unitary highest weight contribution to $L^2(G/N,\psi)$, i.e. the only 
unitary highest weight representations occurring in $L^2(G/N,\psi)$ are the holomorphic discrete series representations.

\subsection{The case $G=\SU(1,1)$}

Let $G=\SU(1,1)$, realized as the subgroup of $\SL(2,\CC)$ fixing the Hermitian form $(z_1,z_2)\mapsto|z_1|^2-|z_2|^2$, and write $\frakg=\su(1,1)$ for its Lie algebra. We fix the Cartan involution $\theta(g)=(g^*)^{-1}$ on $G$ and its differentiated version $\theta(x)=-x^*$ on $\frakg$. Let $K=G^\theta$ and $\frakk=\frakg^\theta$.

Since $\frakg$ is a totally real subspace of $\sl(2,\CC)$, we can complexify subalgebras $\frakh$ of $\frakg$ inside $\sl(2,\CC)$ by $\frakh_\CC=\frakh+i\frakh\subseteq\sl(2,\CC)$. Passing to the corresponding connected analytic subgroups of $\SL(2,\CC)$ constructs natural complexifications on the group level.

Let us fix some notation. We have the Iwasawa decomposition $G=NAK$ with
\begin{align*}
	A &= \{a_t:t\in\RR\}, & a_t &= \exp t\begin{pmatrix} 0 & 1 \\ 1 & 0\end{pmatrix} = \begin{pmatrix} \cosh t & \sinh t\\ \sinh t &\cosh t\end{pmatrix},\\
	N &=  \{n_x:x\in\RR\}, & n_x &= \exp x\begin{pmatrix} i & -i \\ i & -i\end{pmatrix} = \begin{pmatrix} 1+ ix & -ix \\ ix & 1-ix\end{pmatrix}.
\end{align*}
The natural complexification of $N$ is
$$ N_\CC = \{n_z:z\in\CC\}, \qquad n_z = \begin{pmatrix} 1+ iz & -iz \\ iz & 1-iz\end{pmatrix}. $$
On the other hand, we have the decomposition $\frakg_\CC=\frakp^-\oplus\frakk_\CC\oplus\frakp^+$ with
$$ \frakp^+ = \CC\begin{pmatrix} 0 & 1\\ 0 & 0\end{pmatrix} \qquad \mbox{and} \qquad \frakp^- = \CC\begin{pmatrix} 0 & 0\\ 1 & 0 \end{pmatrix}. $$
The corresponding groups are given by
\begin{align*}
	P^+ &= \left\{ p^+_z=\begin{pmatrix} 1 & z \\ 0 & 1\end{pmatrix}:z\in \CC\right\},\\
	K_\CC &= \left\{k_\theta=\begin{pmatrix}e^{i\theta/2} & 0 \\ 0 & e^{-i\theta /2}\end{pmatrix}:\theta\in\CC\right\},\\
	P^- &= \left\{p^-_w=\begin{pmatrix} 1 & 0 \\ w & 1\end{pmatrix}:z\in \CC\right\}.
\end{align*}

\begin{lemma}\label{lem:NKPdecomp1}
Let
$$ g=\begin{pmatrix}a&b\\c&d\end{pmatrix}\in\SL(2,\CC). $$
Then $g=p_x^+ k_\theta n_s  \in P^+ K_\CC N_\CC$ if and only if $c+d\neq 0$. In that case we have
\[e^{i\theta/2} = \frac{1}{c+d}, \quad\text{and}\quad  s = \frac{-ic}{c+d}.\]
In particular $ P^+ K_\CC N_\CC$ is dense in $G_\CC$.
\end{lemma}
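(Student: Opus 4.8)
The plan is to reduce the three-term decomposition to a statement about the Borel subgroup of $\SL(2,\CC)$. First I would observe that $P^+K_\CC$ is exactly the group $B$ of upper-triangular matrices in $\SL(2,\CC)$: multiplying out,
$$ p_x^+ k_\theta = \begin{pmatrix} e^{i\theta/2} & x\,e^{-i\theta/2} \\ 0 & e^{-i\theta/2}\end{pmatrix}, $$
and as $(\theta,x)$ ranges over $\CC^2$ the $(1,1)$-entry ranges over $\CC^\times$ (the complex exponential being onto $\CC^\times$) and, for each fixed value, the $(1,2)$-entry ranges over all of $\CC$; hence the image is all of $B$. Since $(n_s)^{-1}=n_{-s}$ (because $s\mapsto n_s$ is a one-parameter group, $n_s$ being $\exp$ of a nilpotent matrix), it follows that $g\in P^+K_\CC N_\CC$ if and only if $g\in BN_\CC$, i.e.\ if and only if $g\,n_{-s}\in B$ for some $s\in\CC$, i.e.\ if and only if the lower-left entry of $g\,n_{-s}$ vanishes for some $s\in\CC$.

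Next I would carry out the one computation the lemma needs. Writing $n_{-s}=\begin{pmatrix}1-is & is\\ -is & 1+is\end{pmatrix}$, the lower-left entry of $g\,n_{-s}$ equals $c-is(c+d)$. If $c+d\neq 0$, this vanishes for the unique value $s=\dfrac{-ic}{c+d}$, which is the asserted formula; if $c+d=0$ the entry equals $c$ for every $s$, and it cannot vanish, because $c=0$ together with $d=-c=0$ would contradict $\det g=ad-bc=1$. This proves the equivalence $g\in P^+K_\CC N_\CC\iff c+d\neq 0$ and identifies $s$. To get $e^{i\theta/2}$, note that for this $s$ the lower-right entry of $g\,n_{-s}$ is $c\cdot is+d(1+is)=d+is(c+d)=c+d$, while the lower-right entry of $p_x^+k_\theta$ is $e^{-i\theta/2}$; comparing gives $e^{-i\theta/2}=c+d$, hence $e^{i\theta/2}=\dfrac{1}{c+d}$. (The parameter $x$ is then the unique number matching the top row, but no explicit formula for it is required.)

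For the final assertion, by what was just shown the complement of $P^+K_\CC N_\CC$ in $G_\CC=\SL(2,\CC)$ is the zero set of the polynomial $g\mapsto c+d$, which is not identically zero on $\SL(2,\CC)$; hence $P^+K_\CC N_\CC$ is a non-empty Zariski-open — in particular dense — subset of $G_\CC$. As for difficulty: none of this is really an obstacle, the whole lemma being a $2\times2$ matrix computation once one recognizes $P^+K_\CC=B$; the only place asking for a moment's care is the degenerate direction $c+d=0$, which is ruled out using $\det g=1$.
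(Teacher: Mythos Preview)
Your proof is correct and follows essentially the same approach as the paper: both arguments compute the bottom row of the relevant matrix product and solve for $\gamma=e^{i\theta/2}$ and $s$. The paper multiplies out $p_x^+k_\theta n_s$ directly and reads off $c=\gamma^{-1}is$, $d=\gamma^{-1}(1-is)$; you instead observe $P^+K_\CC=B$ and compute the bottom row of $g\,n_{-s}$, which is the same computation from the other side. Your version is a bit more explicit in handling the degenerate case $c+d=0$ via $\det g=1$ and in justifying the density claim, but these are routine details the paper leaves implicit.
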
  
\begin{proof}
We have with $\gamma = e^{i\theta /2}$:
\begin{equation}\label{eq:pkn}
p_x^+k_\theta n_s = \begin{pmatrix} * & *\\ \gamma^{-1}is & \gamma^{-1} (1-is)\end{pmatrix} = 
\begin{pmatrix} a & b\\ c & d\end{pmatrix}
\end{equation}
and the claim follows.
\end{proof}

If $g\in\SL(2,\CC)$ is in $P^+K_\CC N_\CC$ write
\[g= p^+(g)k_\CC (g) n_\CC (g)  .\]

The group $\SL(2,\CC)$ acts on the Riemann sphere $\CC\cup\{\infty\}$ by fractional linear transformations, and with respect to this action, $\SU(1,1)$ acts transitively on the unit disc $D=\{z\in\CC:|z|<1\}\simeq\SU(1,1)/U(1)$. We write $\overline{D}$ for its closure. Define
\begin{equation}\label{eq:XiS}
\Xi = \{s\in \SL(2,\CC) : s\cdot D\subseteq D\}\quad\text{and} \quad
S = \Xi^{-1}.\end{equation}
Then $S$ is a closed semigroup containing $G$ with interior
\[S^o =\{s\in \SL (2,\CC) : s^{-1} \cdot \overline{D} \subseteq D\}\not=\emptyset .\]

\begin{theorem} \label{lem:NKPdecompositionSU11}
 If $g\in \SL (2,\CC)$ maps $\overline{D} $ into $\overline{D}$ then
$g\in P^+K_\CC N_\CC$. In particular,   $\Xi \subseteq P^+ K_\CC N_\CC$. Furthermore,
if $g=a_t$ then
\[k_\CC (g) =\begin{pmatrix} e^{-t} & 0\\ 0 & e^t\end{pmatrix} =k_{2it}   \quad \text{and}\quad n_\CC(g)=n_{i(e^{-2t}-1)/2}.\]
\end{theorem}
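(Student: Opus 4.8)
The plan is to reduce the statement to Lemma~\ref{lem:NKPdecomp1} by checking that the lower matrix entries of $g$ satisfy $c+d\neq 0$, and then to perform the direct computation for $g=a_t$.

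First I would argue that if $g=\begin{pmatrix} a & b\\ c & d\end{pmatrix}\in\SL(2,\CC)$ maps $\overline{D}$ into $\overline{D}$, then $c+d\neq 0$. Indeed, suppose $c+d=0$. If $c=0$ then also $d=0$, contradicting $\det g=1$; hence $c\neq 0$, and the fractional linear transformation $z\mapsto\frac{az+b}{cz+d}$ has its unique pole at $z=-d/c=1$. But $1\in\partial D\subseteq\overline{D}$, so $g\cdot\overline{D}$ would contain $\infty\notin\overline{D}$, a contradiction. Thus $c+d\neq 0$, and Lemma~\ref{lem:NKPdecomp1} yields $g\in P^+K_\CC N_\CC$. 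For the inclusion $\Xi\subseteq P^+K_\CC N_\CC$, note that any $g\in\Xi$ satisfies $g\cdot D\subseteq D$; since $g$ acts on the Riemann sphere as a homeomorphism, $g\cdot\overline{D}=\overline{g\cdot D}\subseteq\overline{D}$, and the previous paragraph applies.

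Finally, for $g=a_t=\begin{pmatrix}\cosh t & \sinh t\\ \sinh t & \cosh t\end{pmatrix}$ I would read off $c=\sinh t$, $d=\cosh t$, so $c+d=e^t\neq 0$ and Lemma~\ref{lem:NKPdecomp1} applies. It gives $e^{i\theta/2}=\frac{1}{c+d}=e^{-t}$, hence $k_\CC(a_t)=\begin{pmatrix}e^{-t} & 0\\ 0 & e^{t}\end{pmatrix}=k_{2it}$, and $s=\frac{-ic}{c+d}=\frac{-i\sinh t}{e^t}=\frac{-i(1-e^{-2t})}{2}=\frac{i(e^{-2t}-1)}{2}$, so that $n_\CC(a_t)=n_{i(e^{-2t}-1)/2}$.

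The argument is essentially immediate given Lemma~\ref{lem:NKPdecomp1}; the only point requiring a moment's care is the degenerate case $c+d=0$ (where one uses $\det g=1$ to exclude $c=0$ and then identifies the pole of the Möbius transformation with the boundary point $1$), together with the elementary observation that an element of $\Xi$ contracts the closed disc $\overline{D}$, not merely the open disc.
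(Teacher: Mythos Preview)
Your proof is correct and follows essentially the same approach as the paper: both reduce to Lemma~\ref{lem:NKPdecomp1} by verifying $c+d\neq 0$, and both then read off the $a_t$ decomposition from the explicit formulas there. The only cosmetic difference is in the contradiction argument for $c+d=0$: the paper observes that boundedness of $\left|\frac{az+b}{cz+d}\right|$ as $z\to 1$ forces $a+b=0$ and hence $\det g=0$, whereas you directly identify the pole of the M\"obius map at $z=1\in\overline{D}$; these are two phrasings of the same observation.
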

\begin{proof}
Let $z\in D$ and suppose that $g=\begin{pmatrix} a & b\\ c & d\end{pmatrix}\in \SL (2,\CC)$ maps $\overline{D}$ into $\overline{D}$. Then
\[g\cdot z = \frac{az + b}{cz+d} \in D,\]
so $\left| \frac{az + b}{cz+d}\right|\le 1$. Assume that $c+d = 0$. Then 
\[\lim_{z\to 1} \left|\frac{az + b}{cz+d}\right| \le 1\]
which implies that $a+b=0$. But then the second column of $g$ is the negative of the first column, which
implies that $\det g =0$, contradicting the assumption that $\det g =1$.
Hence, $c+d\not=0$. The last statement follows from \eqref{eq:pkn}.
\end{proof}

The unique (up to scalar multiples) left-invariant measure on $G/N$ is given by
\begin{equation}\label{eq:GNInt}
\int_{G/N} f(x)d\mu_{G/N} (x) = \frac{1}{4\pi} \int_{0}^{4\pi}\int_{-\infty}^\infty f(k_\theta a_tN) e^{2t}\,dt\,d\theta.
\end{equation}

The holomorphic discrete series of $G$ is parametrized by integers $n>1$ and is realized in the Hilbert space $\mathcal H_n^2(D)$ of
holomorphic functions on $D$ satisfying 
\[ \|f\|_n^2 =\frac{\Gamma (n)}{\pi} \int_D |f(z)|^2(1-|z|^2)^{n-2}\,dx\,dy<\infty, \]
where the inner product is normalized so that $\|1\|_n = 1$. The representation $D_n$ is given by
\[D_n(g)  f(z) = (-cz + a)^{-n}f(g^{-1}z) \quad\text{where } g= 
 \begin{pmatrix} a & b\\ c & d\end{pmatrix} .\] 
 In particular, with $f_k (z) = z^k$ we
 have
 \[D_n (k_\theta) f_k(z) = e^{-i(k +n/2)\theta} f_k(z).\]
 
 The representation $D_n$ extends to a holomorphic representation of $\Gamma= (S^o)^{-1}\subseteq\Xi$. For $n>1$ and $v\in\RR$ let
 $$ \psi_v (n_s) =e^{-i v s} \qquad\mbox{and}\qquad \chi_n(k_{\theta}) = e^{-in\theta/2}. $$
 For $\gamma \in P^+ K_\CC N_\CC $ define
 \begin{equation}\label{eq:Fpi}
 F_n (\gamma ) = \psi_{v}(n_\CC(\gamma )^{-1} )\chi_n(k_\CC (\gamma )^{-1}) .
 \end{equation}
 Then
 \[F_n(k_\theta^{-1} \gamma n) = \psi_{v}(n)^{-1} \chi_n(k_\theta)F_n (\gamma ) \]
 and
 \[|F_n (k_\theta a_t n)|^2 =  e^{-2nt}e^{v(1-e^{-2t})}. \]
 Hence, \eqref{eq:GNInt} implies that in fact $F_n\in L^2(G/N,\psi_v)$ if and only if $n>1$ and $v>0$. It
 follows that $F_n$ belongs to the lowest $K$-type of the holomorphic discrete series $D_n$ realized in
 $L^2(G/N,\psi_v)$.
 
 \subsection{The decomposition $G\subseteq P^+ K_\CC N_\CC$, the general case}

We now let $G$ be as in Section~\ref{sec:AssociatedGroups} and assume that it is contained in a connected complex Lie group $G_\CC$ whose Lie algebra is a complexification of $\frakg$. We form the connected subgroup $N_\CC$ of $G_\CC$ with Lie algebra $\frakn_\CC$. Further, assume that the Cartan involution $\theta$ on $G$ extends to a holomorphic involution on $G_\CC$ and let $K_\CC$ denote its fixed point subgroup. Decompose $\frakp_\CC=\frakp^+\oplus\frakp^-$ as in \eqref{eq:p-plus-minus} and write $P^\pm=\exp\frakp^\pm$. We remark that $N_\CC$ is conjugate to $P^\pm$ inside $G_\CC$ just as $L_\CC$ is conjugate to $K_\CC$.

Recall from Section~\ref{sec:AssociatedGroups} the maximal set of strongly orthogonal roots $\gamma_1,\ldots,\gamma_r$, $r=\operatorname{rank}G/K$, and the corresponding $\sl(2)$-triples $(h_j,e_j,f_j)$. Each of the roots $\gamma_j$ defines a homomorphism $\varphi_j : \SU (1,1)\to G$ determined on the Lie algebra level by
\[ \begin{pmatrix}0&1\\1&0\end{pmatrix}\mapsto h_j, \qquad \frac{1}{2}\begin{pmatrix}i&-i\\i&-i\end{pmatrix}\mapsto e_j, \qquad \text{and} \qquad  \frac{1}{2}\begin{pmatrix}-i&-i\\i&i\end{pmatrix}\mapsto f_j. \]
Furthermore, $\fraka =\bigoplus_{j=1}^r \mathbb{R} h_j$ is maximal abelian in $\mathfrak{p}$. Let $A=\exp \fraka$, then we have $L = (K\cap L)A(K\cap L)$. Note that the groups $G_j = \varphi_j(\SU (1,1))$ are pairwise commuting and
\[G= K  L N = KA (L\cap K) N.\]
 
As before, we use the Harish-Chandra embedding of $G/K$ as a bounded domain $D\subseteq P^+ \subseteq G_\CC /K_\CC P^-_\CC$ and consider the semigroups
\begin{equation}
	\Xi=\{s\in G_\CC : s\cdot D \subseteq D\} \qquad\text{and}\qquad S=\Xi^{-1}.\label{eq:XiSGeneral}
\end{equation}
Then $G\subseteq\Xi$ and the interior of $\Xi$ is given by
 \[\Xi^o = \{s\in G_\CC : s\cdot \overline{D} \subseteq D\}.\]
 Both $\Xi$ and $S$ are $G$-invariant and we have $G=(\Xi \cap S)_o$ and $G\cap \Xi^o=G\cap S^o=\emptyset$.
 
\begin{theorem}\label{thm:Decomp2}
	We have $\Xi\subseteq P^+ K_\CC  N_\CC  $. Moreover, the decomposition of $g\in \Xi$ into 
$g=p^+ (g) k_\CC (g) n_\CC(g) $ with $n_\CC(g)\in N_\CC$, $k_\CC(g)\in K_\CC$ and $p^+(g)\in P^+$ is unique.
\end{theorem}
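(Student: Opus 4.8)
The plan is to establish uniqueness first and then the inclusion, the latter being the main content.

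\emph{Uniqueness.} I would start from the infinitesimal transversality $\frakg_\CC = \frakp^+\oplus\frakk_\CC\oplus\frakn_\CC$ (direct sum of subspaces). Since $\dim_\CC\frakp^+ + \dim_\CC\frakn_\CC = 2n = \dim_\CC\frakp_\CC$, this amounts to $\frakn_\CC\cap(\frakk_\CC\oplus\frakp^+) = 0$, which I would check on the commuting $\sl(2)$-triples $(h_j,e_j,f_j)$ of Section~\ref{sec:AssociatedGroups} — it is already visible in the rank-one picture from \eqref{eq:pkn} — using that the root vectors $e_j$ and the root spaces for $\tfrac12(\gamma_j+\gamma_k)$ generate $\frakn$. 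Consequently the multiplication map $P^+\times K_\CC\times N_\CC\to G_\CC$ is a submersion, with open image $P^+K_\CC N_\CC = QN_\CC$, where $Q := K_\CC P^+ = P^+K_\CC$ is a parabolic subgroup. For injectivity, suppose $p_1 k_1 n_1 = p_2 k_2 n_2$; rewriting as $k_1(n_1 n_2^{-1})k_2^{-1} = p_1^{-1}p_2\in P^+\subseteq Q$ and evaluating both sides at the base point $o\in G_\CC/Q$, which is fixed by $Q$ and hence by $k_1,k_2\in K_\CC\subseteq Q$, gives $(n_1 n_2^{-1})\cdot o = o$, i.e.\ $n_1 n_2^{-1}\in Q\cap N_\CC$. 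But $\frakq\cap\frakn_\CC = 0$ forces $Q\cap N_\CC = \{e\}$, so $n_1 = n_2$; then $k_1 k_2^{-1}\in P^+\cap K_\CC = \{e\}$, and finally $p_1 = p_2$.

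\emph{The inclusion $\Xi\subseteq P^+K_\CC N_\CC$.} The same manipulation yields the reformulation
\[ g\in P^+K_\CC N_\CC \quad\Longleftrightarrow\quad g^{-1}\cdot o\in N_\CC\cdot o \ \text{ in }\ \check X := G_\CC/Q . \]
By the transversality above, $N_\CC\cdot o$ is open in the flag manifold $\check X$, with complement a proper closed subset $H$. Two facts about how the bounded domain $D$ sits in $\check X$ then finish the proof: first, $o\notin\overline D$ — the base point $o$ is the centre of the affine chart $P^-\cdot o\cong\frakp^-$ of $\check X$ and lies in the complementary open $G$-orbit $G\cdot o\cong G/K$, which is disjoint from $\overline D$; second, $H\subseteq\overline D$. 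Granting these, for $g\in\Xi$ put $\sigma = g^{-1}\in S = \Xi^{-1}$: from $gD\subseteq D$ we get $g\overline D\subseteq\overline D$, i.e.\ $\sigma^{-1}\overline D\subseteq\overline D$, hence $\overline D\subseteq\sigma\overline D$, so $\sigma$ maps $\check X\setminus\overline D$ into itself. Since $o\notin\overline D$ this gives $g^{-1}\cdot o = \sigma\cdot o\in\check X\setminus\overline D$, which is disjoint from $H\subseteq\overline D$; therefore $g^{-1}\cdot o\in N_\CC\cdot o$ and $g\in P^+K_\CC N_\CC$. As $G\subseteq\Xi$, this also recovers the decomposition for $G$ itself.

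\emph{Where $\SU(1,1)$ and the main obstacle come in.} The two geometric facts ``$o\notin\overline D$'' and ``$H\subseteq\overline D$'' are where the $\SU(1,1)$-reduction enters: using $\fraka = \bigoplus_j\RR h_j$ and the pairwise commuting homomorphisms $\varphi_j:\SU(1,1)\to G$, one passes to the maximal polydisc and computes the positions of $o$, of $D$ and of $H$ relative to it factor by factor — this works because $\prod_j\varphi_j(N_\CC^{(j)})\subseteq N_\CC$ and similarly for $P^+$ and $K_\CC$ — thereby reducing to the rank-one assertions already contained in Lemma~\ref{lem:NKPdecomp1}, Theorem~\ref{lem:NKPdecompositionSU11} and \eqref{eq:pkn}. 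The genuine obstacle throughout is that $P^+$, $K_\CC$ and $N_\CC$ do not normalise one another, so a product of elements of $P^+K_\CC N_\CC$ need not lie in $P^+K_\CC N_\CC$ — already $N_\CC P^+\not\subseteq P^+K_\CC N_\CC$ in the rank-one case — which is precisely why one cannot prove the inclusion by decomposing the factors of a polar/Cartan decomposition of $\Xi$ and multiplying them back together. Passing to $\check X$ and to the condition ``$g^{-1}\cdot o\notin H$'' is what circumvents this: it reduces the whole statement to the transversality $\frakn_\CC\cap\frakq = 0$, the two rank-one-reducible inclusions, and the elementary remark that elements of $S$ expand $\overline D$. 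A technical wrinkle requiring care is that $\check X = G_\CC/(K_\CC P^+)$ is a priori a different flag manifold from the one $G_\CC/(K_\CC P^-)$ in which $D$ was realised; for $G$ of tube type the self-conjugacy $\overline D = D$ of $D\subseteq V_\CC$ identifies the two compatibly, which is what lets the argument run uniformly.
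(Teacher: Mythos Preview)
Your uniqueness argument is fine and is essentially the paper's. The gap is in the inclusion step, at the claim $H\subseteq\overline D$.

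You assert this and propose to check it by $\SU(1,1)$-reduction, but that reduction only controls the maximal polydisc slice, while $H$ is a global divisor in $\check X$. In rank one the claim is the single observation that $[1:1]$ lies on the unit circle; for $r\ge 2$ it says that the entire Schubert divisor complementary to $N_\CC\cdot o$ is trapped inside $\overline D$, which is a much stronger statement. Concretely, $\check X\setminus\overline D$ then contains open $G$-orbits other than $G\cdot o$ (for $\Sp(4,\RR)$ acting on the Lagrangian Grassmannian there is a ``signature $(1,1)$'' orbit disjoint from both $\overline D$ and $\overline{G\cdot o}$), and your argument would force every such orbit to lie in $N_\CC\cdot o$ --- which is at least as hard as what you are trying to prove. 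The ``technical wrinkle'' of transporting $D$ from $G_\CC/(K_\CC P^-)$ into $\check X=G_\CC/(K_\CC P^+)$ is also not resolved by invoking $\overline D=D$; one needs a specific $G_\CC$-equivariant identification and must track both $D$ and $H$ under it, which you do not do.

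The paper sidesteps $\check X$ altogether for the passage from $G$ to $\Xi$. It first proves $G\subseteq P^+K_\CC N_\CC$ by exactly the $\SU(1,1)$-reduction you describe --- and here your ``product obstacle'' does not bite, because $G=KA(K\cap L)N$ and the rank-one factors of $A$ commute, so $a=\prod_j a_j=\prod_j p_j^+k_jn_j=(\prod_j p_j^+)(\prod_j k_j)(\prod_j n_j)$ can be reordered. Then for general $s\in\Xi$ it uses the involution $s\mapsto s^*=\sigma(s)^{-1}$ together with the standard Harish-Chandra fact that the stabiliser of $0\in D$ in $G_\CC$ is $K_\CC P^-$: since $\Xi^*=\Xi$, the point $s^*\cdot 0$ lies in $D=G\cdot 0$, so $s^*\in G\,K_\CC P^-$; applying $*$ (and $(P^-)^*=P^+$) gives $s\in P^+K_\CC\,G$. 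Now $P^+K_\CC$ \emph{is} a group, so $P^+K_\CC\,G\subseteq P^+K_\CC\cdot P^+K_\CC N_\CC=P^+K_\CC N_\CC$. That single algebraic trick is how the paper gets around the non-closure of $P^+K_\CC N_\CC$ under multiplication, rather than any flag-variety geometry.
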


\begin{proof}
	We first show that $G\subseteq P^+ K_\CC  N_\CC$. Since $G=KLN $ and $P^+ K_\CC N_\CC $ is left $K_\CC$-invariant
and right $(K_\CC\cap L_\CC) N_\CC$-invariant, it suffices to show that $A\subseteq P^+ K_\CC N_\CC $. 
But that follows from Theorem \ref{lem:NKPdecompositionSU11} applied to the subgroups $G_j=\varphi_j(\SU(1,1))$.

Now denote by $: G_\CC \to G_\CC,\,g\mapsto\overline{g}$ the conjugation with respect to $G$ and let $s^* = \overline{s}^{-1}$. Then
$\Xi^* = \Xi$ and $(P^-)^* = P^+$. Let $s\in \Xi$, then $s^*\cdot 0 \in D$ and hence there exists $g\in G$ such that $g^{-1}s^*\cdot0=0$. The stabilizer of $0$ in $G_\CC$ is $K_\CC P^-$, so we can write $g^{-1}s^*=kp^-$ for some $k\in K_\CC$ and $p^- \in P^-$. Thus, using $G\subseteq P^+ K_\CC  N_\CC$, we find
\[s=(p^-)^*k^* g^{-1} \in P^+ K_\CC G\subseteq P^+ K_\CC N_\CC .\]
	
The second claim is a consequence of the fact that $K_\CC P^\pm\simeq K_\CC\ltimes P^\pm$ is a group and $N_\CC\cap K_\CC P^\pm=\{e\}$.
\end{proof}

\subsection{Intertwining operators $\calH^2_\pi\to L^2(G/N,\psi)$}
Let $(J_\pi,\calH_\pi^2)$ be a unitary highest weight representation of $G$ with highest weight space $(\pi,\calV_\pi)$, realized on a space of holomorphic functions $f:G/K\to\calV_\pi$. Recall that $J_\pi$ extends to a representation of $S$ such that $J_\pi(s^*) =J_\pi(s)^*$. Identifying $G/K$ with either the bounded symmetric domain $D$ or the tube domain $T_\Omega$ yields the models discussed in Sections~\ref{sec:TubeDomainModel} and \ref{sec:BoundedDomainModel}. Note that in the case of the unbounded realization we have to replace $D$ by $T_\Omega$ in \eqref{eq:XiSGeneral}. The action is in both cases given by
$$ J_\pi(g)f(z) = j_\pi(g^{-1},z)f(g^{-1}z) \qquad (g\in S, z\in G/K), $$
for a cocycle $j_\pi:\Xi\times G/K\to\GL(\calV_\pi)$ satisfying
$$ j_\pi(gh,z) = j_\pi(h,z)j_\pi(g,hz) \qquad (g,h\in\Xi ,z\in G/K). $$

Now assume that $T:\calH_\pi^2\to L^2(G/N,\psi)$ is a unitary intertwining operator. Let $f\in L^2(G/N,\psi)$ and $z\in G/K$, then
\[ \langle T^*f(z),\xi\rangle = \ip{T^*f}{K_z\xi} =\ip{f}{TK_z\xi}=\int_{G/N}f(g)\overline{T(K_z\xi)(g)}\,d\dot{g} \]
for all $\xi\in\calV_\pi$, where $K_z:G/K\to\GL(\calV_\pi)$ denotes the reproducing kernel of $\calH_\pi^2$. Let
$K_{z,\xi}(w) = K_z(w)\xi$. Define $\Psi:G\times G/K\to\calV_\pi$ by
\begin{equation}
	\langle\Psi(g,z),\xi\rangle = \overline{T(K_{z,\xi})(g)} \qquad (g\in G,z\in G/K,\xi\in\calV_\pi),\label{eq:TvsPsi}
\end{equation}
then
$$ T^*f(z) = \int_{G/N}f(g)\Psi(g,z)\,d\dot{g}. $$
Note that this is well defined because
\begin{equation} \label{eq:Ps1}
\Psi (gn,z) = \overline{\psi(n)^{-1}}\Psi (g , z) = \psi (n) \Psi (g , z).
\end{equation}

We note that, since $K_z\in(\calH_\pi^2)^\infty$ and $L^2(G/N,\psi)^\infty\subseteq C^\infty(G/N,\psi)$, the function $\Psi$ is smooth. Moreover, since $z\mapsto K_z$ is antiholomorphic, the function $\Psi(g,z)$ is holomorphic in $z$. Moreover, as $T$ is
an intertwining operator, we can rewrite
$$ \langle\Psi(g,z),\xi\rangle = \overline{(T\circ J_\pi(g^{-1})K_{z,\xi})(e)} =  \overline{(T\circ J_\pi(g^*)K_{z,\xi})(e)} $$
and conclude that $\Psi(g,z)$ extends to a continuous function in $g\in S$ holomorphic on $S^o$. 

\begin{lemma}\label{le:TrPsi}
	\begin{enumerate}
		\item\label{le:TrPsi1} $\Psi(gn,z)=\psi(n)\Psi(g,z)$ for all $g\in S$, $n\in N$ and $z\in G/K$.
		\item\label{le:TrPsi2} $\Psi(gx,z)=j_\pi(g^{-1},z)\Psi(x,g^{-1}z)$ for all $g,x\in S$ and $z\in G/K$.
	\end{enumerate}	
\end{lemma}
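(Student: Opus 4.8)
The plan is to deduce both parts directly from the formula
\[ \ip{\Psi(g,z)}{\xi} = \overline{\bigl(T\circ J_\pi(g^*)K_{z,\xi}\bigr)(e)} \qquad (g\in S^o,\ z\in G/K,\ \xi\in\calV_\pi) \]
established just above, and then to extend the resulting identities from $S^o$ to all of $S$ by continuity. For $g\in S^o$ this formula makes literal sense because $J_\pi(g^*)$ maps $\calH_\pi^2$ into (analytic, hence) smooth vectors, so $T\circ J_\pi(g^*)K_{z,\xi}\in C^\infty(G/N,\psi)$ may be evaluated at $e$. For the extension I would use that $\Psi$ is continuous in the semigroup variable, that inversion on $S$ and the $\Xi$-action on $\overline{D}\simeq G/K$ are continuous, and that $S$ is the closure of $S^o$, so $S^o$ (and $S^o\times S^o$) is dense in $S$ (resp.\ $S\times S$). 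On $G$, part \eqref{le:TrPsi1} is already \eqref{eq:Ps1}.

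First I would record a transformation rule for the reproducing kernel under the extended action: for $g\in S^o$, so that $g^{-1}\in\Xi$ maps $\overline{D}$ into $D$ and $j_\pi(g^{-1},\cdot)$ is defined on $G/K$,
\[ J_\pi(g^*)K_{z,\xi} = K_{g^{-1}z,\ j_\pi(g^{-1},z)^*\xi}. \]
This follows by pairing with an arbitrary $F\in\calH_\pi^2$, using $J_\pi(g^*)^*=J_\pi(g)$ (valid since $J_\pi(s^*)=J_\pi(s)^*$ and $s\mapsto s^*$ is an involution), the cocycle description $\bigl(J_\pi(g)F\bigr)(z)=j_\pi(g^{-1},z)F(g^{-1}z)$, and the reproducing property $\ip{F}{K_{w,\zeta}}=\ip{F(w)}{\zeta}$:
\[ \ip{F}{J_\pi(g^*)K_{z,\xi}} = \ip{J_\pi(g)F}{K_{z,\xi}} = \ip{\bigl(J_\pi(g)F\bigr)(z)}{\xi} = \ip{F(g^{-1}z)}{j_\pi(g^{-1},z)^*\xi} = \ip{F}{K_{g^{-1}z,\,j_\pi(g^{-1},z)^*\xi}}. \]

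For \eqref{le:TrPsi1}, fix $g\in S^o$, $n\in N$, $z\in G/K$, $\xi\in\calV_\pi$. Since $n^*=n^{-1}$, we have $(gn)^*=n^{-1}g^*$, so by multiplicativity of $J_\pi$ on $S$ and the intertwining property $T\circ J_\pi(n^{-1})=\lambda(n^{-1})\circ T$, where $\lambda$ is the left regular representation of $G$ on $L^2(G/N,\psi)$, writing $F:=T\circ J_\pi(g^*)K_{z,\xi}\in C^\infty(G/N,\psi)$ gives
\[ \bigl(T\circ J_\pi((gn)^*)K_{z,\xi}\bigr)(e) = \bigl(\lambda(n^{-1})F\bigr)(e) = F(n) = \psi(n)^{-1}F(e); \]
conjugating and using $\overline{\psi(n)^{-1}}=\psi(n)$ yields $\Psi(gn,z)=\psi(n)\Psi(g,z)$ for $g\in S^o$, hence for all $g\in S$. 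For \eqref{le:TrPsi2}, fix $g,x\in S^o$; from $(gx)^*=x^*g^*$, multiplicativity of $J_\pi$, and the kernel rule with $\eta:=j_\pi(g^{-1},z)^*\xi$,
\[ \ip{\Psi(gx,z)}{\xi} = \overline{\bigl(T\circ J_\pi(x^*)J_\pi(g^*)K_{z,\xi}\bigr)(e)} = \overline{\bigl(T\circ J_\pi(x^*)K_{g^{-1}z,\eta}\bigr)(e)} = \ip{\Psi(x,g^{-1}z)}{\eta}, \]
and the right-hand side equals $\ip{j_\pi(g^{-1},z)\Psi(x,g^{-1}z)}{\xi}$. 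As $\xi$ is arbitrary, $\Psi(gx,z)=j_\pi(g^{-1},z)\Psi(x,g^{-1}z)$ on $S^o\times S^o$, hence on $S\times S$ by continuity and density.

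The main obstacle is organizational rather than conceptual: one must keep straight the involution $s\mapsto s^*=\sigma(s)^{-1}$, the Hilbert-space adjoints $J_\pi(g)^*=J_\pi(g^*)$, the adjoints of the $\GL(\calV_\pi)$-valued cocycles $j_\pi$, and the conjugate-linearity of $\ip{\cdot}{\cdot}_{\calV_\pi}$, and one must check at each step that $g^{-1}z$ and $j_\pi(g^{-1},z)$ really are defined for the relevant $g$ — which holds because $\Xi=S^{-1}$ acts on $\overline{D}$. The only genuinely analytic inputs are the legitimacy of the evaluation at $e$ (the smoothing property of $J_\pi(s)$ for $s\in S^o$) and the passage from $S^o$ to $S$ via $S=\overline{S^o}$ together with the continuity of $\Psi$ noted above; both are part of the standard Hardy-space framework recalled in this section and the appendix.
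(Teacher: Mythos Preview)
Your proof is correct and takes a genuinely different route from the paper's. The paper argues via $T^*$: it writes $T^*f(z)=\int_{G/N}f(x)\Psi(x,z)\,d\dot{x}$, uses the intertwining relation $J_\pi(g)\circ T^*=T^*\circ L_g$ for $g\in G$ to obtain \eqref{le:TrPsi2} as an equality of integral kernels (first for $g,x\in G$), and only then invokes holomorphy/continuity to pass to $S$. You instead work directly with the pointwise formula $\ip{\Psi(g,z)}{\xi}=\overline{(T\circ J_\pi(g^*)K_{z,\xi})(e)}$ on $S^o$, together with the reproducing-kernel transformation rule $J_\pi(g^*)K_{z,\xi}=K_{g^{-1}z,\,j_\pi(g^{-1},z)^*\xi}$; this yields \eqref{le:TrPsi2} immediately on $S^o\times S^o$ without ever integrating over $G/N$ or passing through $G$. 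Your argument is arguably cleaner --- it never has to infer equality of integrands from equality of integrals --- and it makes the role of the reproducing-kernel identity explicit. The paper's route, on the other hand, emphasizes that \eqref{le:TrPsi2} is nothing but the $G$-equivariance of $T^*$ rewritten in kernel form, which is conceptually nice. Both approaches rely on the same analytic ingredients for the final passage from the open semigroup to all of $S$: the smoothing property $J_\pi(S^o)\calH_\pi^2\subseteq(\calH_\pi^2)^\infty$ and the continuity of $\Psi$ on $S$.
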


\begin{proof}
	The first is \eqref{eq:Ps1}. To show the second identity, we use that $T$, and hence also $T^*$, is an intertwining operator. Thus, for $f\in L^2(G/N,\psi)$, $g\in G$ and $z\in G/K$:
	\begin{align*}
		& \int_{G/N} f(x)j_\pi (g^{-1},z )\Psi (x,g^{-1}z)d\mu_{G/N} (x) = j_\pi (g^{-1},z) T^*f(g^{-1}z)\\
		={}& [J_\pi(g)\circ T^*f](z) = [T^*\circ L_gf](z) = \int_{G/N} f(g^{-1}x)\Psi (x , z)\,d\mu_{G/N}(x)\\
		={}& \int_{G/N} f(x)\Psi(gx,z)\,d\mu_{G/N}(x),
	\end{align*}
	where we have tacitly used that $x\mapsto f(x)\Psi (x,z)$ is right $N$-invariant. As this holds for all $f\in L^2(G/N,\psi)$, it follows that
	\begin{equation}\label{eq:Pg}
		\Psi (gx,z) = j_\pi (g^{-1},z) \Psi (x, g^{-1}z).
	\end{equation}
As both sides are holomorphic on $S^o$ and continuous on $S$, it follows that \eqref{eq:Pg} holds on all of $S$.
\end{proof}

\begin{remark}\label{rem:55}
	\begin{enumerate}
		\item The kernel $\Psi$ is uniquely determined by the function
		\[ \Phi:G/K\to\calV_\pi, \quad \Phi (z)=\Psi (e,z). \]
		More precisely, by Lemma~\ref{le:TrPsi}~\eqref{le:TrPsi2}:
\[
\Psi (g , z) = j_\pi (g^{-1},z)\Phi (g^{-1}z) \qquad (g\in S,z\in G/K). 
\]
		\item One could just as well use the function
		\[ \widetilde{\Phi}:G\to\calV_\pi, \quad \widetilde{\Phi} (g)= \Psi (g,eK). \]
		Then Lemma~\ref{le:TrPsi}~\eqref{le:TrPsi2} shows that
		\[\Psi (g,xK) =j_\pi (x,eK)^{-1} \widetilde{\Phi} (x^{-1}g) .\]
Furthermore, by Lemma~\ref{le:TrPsi} the function $\widetilde{\Phi}$ satisfies
		\begin{itemize}
			\item $\widetilde{\Phi} (kg)=j_\pi(k^{-1},eK) \widetilde{\Phi} (g)=\pi_c(k)\widetilde{\Phi}(g)$ for $k\in K$,
			\item $\widetilde{\Phi}(gn)= \psi(n)\widetilde{\Phi} (g)$ for $n\in N$,
			\item  $\widetilde{\Phi} (p^-g)=j_\pi((p^-)^{-1},eK)\Psi(g,(p^{-})^{-1}K)=\widetilde{\Phi} (g)$ for $p^-\in P^-$.
		\end{itemize}
Hence, $\widetilde{\Phi}$ is determined by $\widetilde{\Phi}(e)\in\calV_\pi$. In fact, the above equivariance properties can be used to give an explicit formula for $\widetilde{\Phi}$. For this, note that $\Xi \to S,\,\xi\mapsto\overline{\xi}$ is a bijective homomorphism. As $\overline{P^+} = P^-$, it follows from
Theorem \ref{thm:Decomp2} that every element in $S$ has a unique decomposition of the form
$$ s = p^- (s)\widetilde k_{\CC}(s)\widetilde n_\CC(s) \qquad (p^-(s)\in P^-,\widetilde{k}_\CC(s)\in K_\CC,\widetilde{n}_\CC(s)\in N_\CC). $$
Then, for $s\in S$ we have
		\[\widetilde{\Phi} (s) = \widetilde{\Phi} (p^-(s)\widetilde k_\CC(s) \widetilde n_\CC(s) ) = 
	\psi(\widetilde n_\CC(s)) \pi_c(\widetilde k_\CC(s))\widetilde{\Phi} (e). \]
	\end{enumerate}
\end{remark}
Using the explicit calculation in Theorem  \ref{lem:NKPdecompositionSU11} one sees by $\SU (1,1)$ reduction that $\widetilde \Phi$ is
square-integrable if and only if the highest weight representation with lowest $K$-type $\pi$ is in the
holomorphic discrete series. This leads to: 

\begin{proposition}\label{prop:EmbeddingKernelPsi}
	Let $T:\calH_\pi^2\to L^2(G/N,\psi)$ be a $G$-equivariant continuous linear operator. Then there exists a vector $\xi\in\calV_\pi$ such that $T^*:L^2(G/N,\psi)\to\calH_\pi^2$ is given by
\[ T^*f(z) = \int_{G/N} f(x)\Psi(g,z)d\mu_{G/N}(x)\]
where 
\begin{equation}\label{eq:Psi3}
\Psi:G\times G/K\to\calV_\pi, \quad \Psi (a,bK) = \psi(\widetilde n_\CC(b^{-1}a)) j_\pi (b,eK)^{-1} \pi_c(\widetilde k_\CC(b^{-1}a))\xi. 
\end{equation}
Conversely, a kernel $\Psi$ defined by \eqref{eq:Psi3} defines an intertwining operator $T^* :L^2(G/N,\psi )
\to \calH_\pi^2(G/K)$ if and only if $\calH_\pi^2$ belongs to the holomorphic discrete series. In particular, 
$$ \dim \Hom_G (\mathcal H_\pi^2,L^2(G/N,\psi ))= \dim \mathcal V_\pi. $$
\end{proposition}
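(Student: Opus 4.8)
The plan is to establish the three assertions in turn, relying on Lemma~\ref{le:TrPsi}, Remark~\ref{rem:55}, Theorem~\ref{thm:Decomp2} and the $\SU(1,1)$-computation in Theorem~\ref{lem:NKPdecompositionSU11}.

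The first assertion is essentially a matter of assembling the discussion preceding the statement. Any $G$-equivariant continuous $T\colon\calH_\pi^2\to L^2(G/N,\psi)$ has adjoint $T^*$ given by integration against the kernel $\Psi$ of \eqref{eq:TvsPsi}, and $\Psi$ obeys the two equivariance identities of Lemma~\ref{le:TrPsi}. Putting $\xi:=\widetilde\Phi(e)=\Psi(e,eK)\in\calV_\pi$, Remark~\ref{rem:55} gives $\widetilde\Phi(s)=\psi(\widetilde n_\CC(s))\pi_c(\widetilde k_\CC(s))\xi$ for $s\in S$ --- using the decomposition $S\subseteq P^-K_\CC N_\CC$ produced by Theorem~\ref{thm:Decomp2} via the conjugation $\sigma$ --- together with $\Psi(a,bK)=j_\pi(b,eK)^{-1}\widetilde\Phi(b^{-1}a)$; since $b^{-1}a\in G\subseteq S$, substituting gives exactly \eqref{eq:Psi3}. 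As $\xi=0$ forces $\Psi\equiv0$ and hence $T^*=T=0$, the map $T\mapsto\xi$ is linear and injective, so $\dim\Hom_G(\calH_\pi^2,L^2(G/N,\psi))\le\dim\calV_\pi$ already.

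For the converse the crucial input is the square-integrability of $\widetilde\Phi$, which I would obtain by $\SU(1,1)$-reduction. Conversely to the above, any $\xi\in\calV_\pi$ defines via \eqref{eq:Psi3} a smooth kernel $\Psi$ satisfying the identities of Lemma~\ref{le:TrPsi}, with $\widetilde\Phi(g)=\Psi(g,eK)$ transforming by $\pi_c$ on the left under $K$ and by $\psi$ on the right under $N$; by the left-$K$-invariance of the $G$-invariant measure, the $L^2(G/N,\psi)$-norm of $\widetilde\Phi$ is governed by its restriction to $A=\exp\bigoplus_j\RR h_j$. Since the rank-one subgroups $G_j=\varphi_j(\SU(1,1))$ pairwise commute and $A$ lies in their product, applying Theorem~\ref{lem:NKPdecompositionSU11} in each factor computes $\psi(\widetilde n_\CC(a))$ and $\pi_c(\widetilde k_\CC(a))\xi$ explicitly as products of one-variable exponentials, exactly as for the function $F_n$ in the rank-one case; together with the density $a^{2\rho}=\Delta(a\cdot e)^{n/r}$ of the $G/N$-measure along $A$, this writes $\|\widetilde\Phi\|_{L^2(G/N,\psi)}^2$ as a product of $r$ one-dimensional integrals of the shape $\int_\RR e^{c\,t}e^{v_j(1-e^{-2t})}\,dt$, and comparing the exponents $c$ with the weight data ${\bf m}$ of $\pi$ (via the convergence criterion from \cite[proof of Theorem~2.5]{DG93}) shows that $\widetilde\Phi\in L^2(G/N,\psi)\otimes\calV_\pi$ if and only if $\omega(\pi)>\frac{2n}{r}-1$, i.e.\ exactly when $\calH_\pi^2$ is a holomorphic discrete series representation.

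Granting this, I would conclude as follows. If $\calH_\pi^2$ is holomorphic discrete series, then $\widetilde\Phi\in L^2(G/N,\psi)\otimes\calV_\pi$, and --- arguing as for the embeddings $\iota_\pi$ of Section~\ref{sec:DiscreteSpectrumL2GN}, that is, identifying the $G$-cyclic span of the components of $\widetilde\Phi$ in $L^2(G/N,\psi)$ with a copy of $\calH_\pi^2$ --- the kernel \eqref{eq:Psi3} defines a bounded $G$-equivariant operator $T^*$ into $\calH_\pi^2$, the intertwining property being immediate from Lemma~\ref{le:TrPsi}~\eqref{le:TrPsi2}; conversely, if \eqref{eq:Psi3} with $\xi\ne0$ defines a nonzero (hence bounded) intertwiner $T^*$, its adjoint $T$ satisfies $TK_{eK,\xi'}(g)=\overline{\langle\widetilde\Phi(g),\xi'\rangle}$, so $\langle\widetilde\Phi(\cdot),\xi'\rangle\in L^2(G/N,\psi)$ for every $\xi'\in\calV_\pi^\vee$, and left-$K$-equivariance together with irreducibility of $\pi_c$ forces $\widetilde\Phi\in L^2(G/N,\psi)\otimes\calV_\pi$, so $\calH_\pi^2$ is holomorphic discrete series. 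Combining the two directions, for holomorphic discrete series $\calH_\pi^2$ the assignment $\xi\mapsto T^*$ is a linear isomorphism of $\calV_\pi$ onto the space of continuous intertwiners $L^2(G/N,\psi)\to\calH_\pi^2$, whence $\dim\Hom_G(\calH_\pi^2,L^2(G/N,\psi))=\dim\calV_\pi$ (and for a unitary highest weight $\calH_\pi^2$ outside the holomorphic discrete series this space is zero). The main obstacle is the square-integrability dichotomy of the preceding paragraph --- performing the $\SU(1,1)$-reduction precisely enough to match the convergence of $\int_\RR e^{c\,t}e^{v_j(1-e^{-2t})}\,dt$ with the bound $\omega(\pi)>\frac{2n}{r}-1$ --- while a secondary point is confirming that $\widetilde\Phi\in L^2(G/N,\psi)\otimes\calV_\pi$ yields a genuinely bounded $T^*$ and not merely a formal intertwiner.
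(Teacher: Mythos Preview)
Your proposal is correct and takes essentially the same approach as the paper. The paper does not provide a formal proof environment for this proposition; it is meant to follow directly from Remark~\ref{rem:55} (which derives \eqref{eq:Psi3} from the transformation properties of $\widetilde\Phi$ and shows it is determined by $\xi=\widetilde\Phi(e)$) together with the sentence immediately preceding the proposition asserting that the $\SU(1,1)$-reduction via Theorem~\ref{lem:NKPdecompositionSU11} gives the square-integrability dichotomy --- and your argument expands on exactly these ingredients, with the added care of spelling out both directions of the ``if and only if'' and flagging the passage from $\widetilde\Phi\in L^2$ to boundedness of $T^*$ (a point the paper also leaves implicit).
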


 \begin{remark}
	The formula in Proposition~\ref{prop:EmbeddingKernelPsi} contains all information about the embedding of a holomorphic discrete series into $L^2(G/N,\psi)$. For instance, one can conclude that the embedding $T:\calH^2_\pi\to L^2(G/N,\psi)$ is given by
	\begin{align*}
		TF(g) &= \langle F,\Psi(g,\cdot)\rangle_{\calH^2_\pi} = \langle F,j_\pi(g,g^{-1}\cdot)^{-1}\Psi(e,g^{-1}\cdot)\rangle_{\calH^2_\pi}\\
		&= \langle F,j_\pi(g^{-1},\cdot)\Psi(e,g^{-1}\cdot)\rangle_{\calH^2_\pi} = \langle F,J_\pi(g)\Psi(e,\cdot)\rangle_{\calH^2_\pi}\\
		&= \langle J_\pi(g)^{-1}F,\Psi(e,\cdot)\rangle
	\end{align*}
	for $g\in G,F\in\calH^2_\pi$. Comparing this expression with the embedding \eqref{eq:DefIotaPi} in terms of Whittaker vectors, we recover the formulas of Proposition~\ref{prop:WhittakerVectorsTOmega}~\eqref{prop:WhittakerVectorsTOmega1} and Proposition~\ref{prop:WhittakerVectorsD}~\eqref{prop:WhittakerVectorsD1} for the Whittaker vectors $W_{v,\eta}=W_{v,\eta}^{T_\Omega}$ and $W_{v,\eta}=W_{v,\eta}^D$. In fact, using the antilinear isomorphism $\calV_\pi\simeq\calV_\pi^\vee,\,\xi\mapsto\overline{\xi}:=\langle\cdot,\xi\rangle$, we obtain that the antiholomorphic function
	$$ W_{v,\eta}(z) = \langle\cdot,\Psi(e,x)\rangle = \overline{\psi(\widetilde{n}_\CC(x^{-1}))}\cdot\overline{j_\pi(x,eK)^{-1}\pi_c(\widetilde{k}_\CC(x^{-1}))\xi} \qquad (z=xK) $$
	is a Whittaker vector for $\calH_\pi^2$. For instance, if we identify $G/K\simeq T_\Omega$, then $z\in T_\Omega$ can be written as $z=n_{z-ie}\cdot(ie)$ with $n_{z-ie}\in N_\CC$, so $z\in T_\Omega$ corresponds to $n_{z-ie}K\in G/K$ and hence
	$$ W_{v,\eta}(z) = \overline{\psi(n_{z-ie}^{-1})} \overline{\xi} = \overline{e^{-i(ie-z|v)}} \overline{\xi} = \const\times e^{-i(\overline{z}|v)}\overline{\xi}. $$
	This is precisely \eqref{eq:WhittakerTOmega} with $\eta=\overline{\xi}$.
\end{remark}

\subsection{A formula for the lowest $K$-types}

In the proof of Theorem~\ref{thm:HolDSContribution} we found a formula for the lowest $K$-type of a holomorphic discrete series embedded into $C^\infty(G/N,\psi)$ and used this formula to show that the lowest $K$-type (and hence the whole holomorphic discrete series) is contained in $L^2(G/N,\psi)$. We now provide another formula for the lowest $K$-type in the spirit of \cite{OO91}, using the decomposition $g=p^+(g)k_\CC(g)n_\CC(g)\in P^-K_\CC N_\CC$. This generalizes the formula \eqref{eq:Fpi} for $\SU (1,1)$. For this we first note that the lowest $K$-type in $\calH_\pi^2$ is spanned by the functions $K_e\xi\in\calH_\pi^2$ ($\xi\in\calV_\pi$), where $K_w(z)=K(z,w)\in\GL(\mathcal V_\pi)$ denotes the reproducing kernel of $\calH_\pi^2$ (see e.g. Section~\ref{sec:TubeDomainModel} for the reproducing kernel of $\calH_\pi^2(T_\Omega)$).

\begin{theorem}\label{thm:LKTonG/N}
	Let $T:\calH^2_\pi\to L^2(G/N,\psi)$ be an intertwining operator. Then the restriction of $T$ to the lowest $K$-type $\calV_\pi$ is given by
	\begin{multline*}
		T_{\pi,\eta}:\calV_\pi\to L^2(G/N,\psi),\\
		T_{\pi,\eta}\xi(g) = \psi(n_\CC(g)^{-1})\eta\big(\pi_c(k_\CC(g)^{-1})\xi\big) \qquad (\xi\in\mathcal V_\pi,g\in G)
	\end{multline*}
	for some $\eta\in\calV_\pi^\vee$. Moreover, the function $T_{\pi,\eta}\xi$ extends to a holomorphic function on $\Xi$. Conversely, for every lowest $K$-type $(\pi,\calV_\pi)$ of a holomorphic discrete series representation and every $\eta\in\calV_\pi^\vee$, the operator $T_{\pi,\eta}$ constructs the lowest $K$-type of a holomorphic discrete series occurring discretely in $L^2(G/N,\psi)$.
\end{theorem}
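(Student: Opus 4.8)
The plan is to extract everything from Proposition~\ref{prop:EmbeddingKernelPsi}, Remark~\ref{rem:55} and Theorem~\ref{thm:Decomp2}; the only genuinely new ingredient is a dictionary between the decomposition $G\subseteq P^+K_\CC N_\CC$ of Theorem~\ref{thm:Decomp2} and the decomposition $S\subseteq P^-K_\CC N_\CC$ used in Remark~\ref{rem:55}, supplied by the conjugation $\sigma$ of $G_\CC$ with respect to $G$. \emph{Step 1, from $\Psi$ to the lowest $K$-type.} Let $T:\calH_\pi^2\to L^2(G/N,\psi)$ be an intertwiner. By Proposition~\ref{prop:EmbeddingKernelPsi}, $\calH_\pi^2$ is a holomorphic discrete series representation and there is $\xi_0\in\calV_\pi$ so that the kernel of $T^*$ is the $\Psi$ of Remark~\ref{rem:55}, with $\widetilde{\Phi}=\Psi(\,\cdot\,,eK)$ and $\widetilde{\Phi}(e)=\xi_0$. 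Since the lowest $K$-type of $\calH_\pi^2$ is spanned by the reproducing-kernel sections $K_e\xi=K_{eK,\xi}$ ($\xi\in\calV_\pi$), the defining relation \eqref{eq:TvsPsi} gives $T(K_{eK,\xi})(g)=\overline{\langle\widetilde{\Phi}(g),\xi\rangle}$ for $g\in G$, so the whole statement reduces to evaluating $\widetilde{\Phi}$ on $G$ and rewriting the result.

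\emph{Step 2, evaluating $\widetilde{\Phi}$ and matching the formula.} For $g\in G\subseteq S$, Remark~\ref{rem:55} yields $\widetilde{\Phi}(g)=\psi(\widetilde{n}_\CC(g))\,\pi_c(\widetilde{k}_\CC(g))\,\xi_0$, where $g=p^-(g)\widetilde{k}_\CC(g)\widetilde{n}_\CC(g)$ is the decomposition in $P^-K_\CC N_\CC$. Applying $\sigma$ to the decomposition $g=p^+(g)k_\CC(g)n_\CC(g)$ of Theorem~\ref{thm:Decomp2} and using $\sigma|_G=\id$, $\sigma(P^+)=P^-$, $\sigma(K_\CC)=K_\CC$, $\sigma(N_\CC)=N_\CC$ together with uniqueness of the decomposition, one obtains $\widetilde{n}_\CC(g)=\sigma(n_\CC(g))$ and $\widetilde{k}_\CC(g)=\sigma(k_\CC(g))$. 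Since $\psi$ is a holomorphic character of $N_\CC$ with $\psi|_N$ unitary, one has $\psi(\sigma(n))=\overline{\psi(n^{-1})}$, and since $\pi_c|_K$ is unitary its holomorphic extension satisfies $\pi_c(\sigma(k))=(\pi_c(k)^{-1})^{*}$; substituting and taking complex conjugates, the three conjugations cancel and
$$ T(K_{eK,\xi})(g)=\psi(n_\CC(g)^{-1})\,\langle\pi_c(k_\CC(g)^{-1})\xi,\xi_0\rangle\qquad(\xi\in\calV_\pi,\ g\in G). $$
With $\eta=\langle\,\cdot\,,\xi_0\rangle\in\calV_\pi^\vee$ this reads $T|_{\calV_\pi}=T_{\pi,\eta}$, which is the first assertion (and it is consistent with \eqref{eq:Fpi} in the rank-one case).

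\emph{Step 3, holomorphic extension and converse.} The subset $P^+K_\CC N_\CC$ is open in $G_\CC$ and, because $K_\CC P^+\simeq K_\CC\ltimes P^+$ and $N_\CC\cap K_\CC P^+=\{e\}$, the projections $g\mapsto k_\CC(g),n_\CC(g)$ are holomorphic there; as $\Xi^o\subseteq P^+K_\CC N_\CC$ by Theorem~\ref{thm:Decomp2} and $\psi:N_\CC\to\CC^\times$, $\pi_c:K_\CC\to\GL(\calV_\pi)$ are holomorphic, the formula for $T_{\pi,\eta}\xi$ defines a function holomorphic on $\Xi^o$ and continuous on $\Xi$ (equivalently, invoke the holomorphy of $g\mapsto\Psi(g,\cdot)$ on $S^o$ noted before Lemma~\ref{le:TrPsi}). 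For the converse, given a holomorphic discrete series lowest $K$-type $(\pi,\calV_\pi)$ and $0\neq\eta\in\calV_\pi^\vee$, pick $\xi_0\in\calV_\pi$ with $\eta=\langle\,\cdot\,,\xi_0\rangle$; the converse part of Proposition~\ref{prop:EmbeddingKernelPsi} produces an intertwiner $T^*:L^2(G/N,\psi)\to\calH_\pi^2$ with kernel built from $\xi_0$, so $T=(T^*)^{*}:\calH_\pi^2\hookrightarrow L^2(G/N,\psi)$ is a nonzero, hence (by irreducibility) injective, $G$-embedding onto a closed copy of the holomorphic discrete series, and by Steps~1--2 its restriction to the lowest $K$-type is $T_{\pi,\eta}$; in particular $T_{\pi,\eta}\xi\in L^2(G/N,\psi)$ and $T_{\pi,\eta}$ realizes the lowest $K$-type inside that copy.

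\emph{Main obstacle.} None of the computations is hard; the one point requiring care is the bookkeeping in Step~2 -- keeping straight which of the two Harish-Chandra-type decompositions is in play, that $\sigma$ converts one into the other on $G$, and that the conjugation in \eqref{eq:TvsPsi}, the conjugation picked up by $\psi$, and the adjoint picked up by $\pi_c$ under $\sigma$ cancel precisely so as to leave the inverses $n_\CC(g)^{-1}$ and $k_\CC(g)^{-1}$ in the final formula. One could instead bypass $\sigma$ by re-deriving the left-equivariance of $\widetilde{\Phi}$ directly against the $P^+K_\CC N_\CC$ decomposition, but the difference is only cosmetic.
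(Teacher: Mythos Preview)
Your proof is correct and follows essentially the same route as the paper: both start from the kernel $\Psi$ of Proposition~\ref{prop:EmbeddingKernelPsi}, evaluate \eqref{eq:TvsPsi} at $z=eK$ to get $TK_e\xi(g)=\overline{\langle\xi,\Psi(g,eK)\rangle}$, and then convert the $(\widetilde{k}_\CC,\widetilde{n}_\CC)$-expression into the $(k_\CC,n_\CC)$-form via the identities $\overline{\psi(\widetilde{n}_\CC(g))}=\psi(n_\CC(g)^{-1})$ and $\pi_c(\widetilde{k}_\CC(g))^*=\pi_c(k_\CC(g)^{-1})$. The paper simply asserts these last two identities, whereas you derive them by applying $\sigma$ to the $P^+K_\CC N_\CC$ decomposition and invoking uniqueness; you also spell out the holomorphic extension and the converse more explicitly than the paper's proof does (the paper defers these to the surrounding discussion and Proposition~\ref{prop:EmbeddingKernelPsi}).
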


\begin{proof}
	By Proposition~\ref{prop:EmbeddingKernelPsi}, the integral kernel $\Psi$ of $T^*:L^2(G/N,\psi)\to\calH^2_\pi$ is given by
	$$ \Psi(a,bK) = \psi(\widetilde n_\CC(b^{-1}a)) j_\pi (b,eK)^{-1} \pi_c(\widetilde k_\CC(b^{-1}a))\eta $$
	for some $\eta\in\calV_\pi$. Then, by \eqref{eq:TvsPsi}, a vector $K_e\xi$ ($\xi\in\calV_\pi$) in the lowest $K$-type is is mapped to
	$$ TK_e\xi(g) = \langle\xi,\Psi(g,e)\rangle = \overline{\psi(\widetilde{n}_\CC(g))}\langle\xi,\pi_c(\widetilde{k}_\CC(g))\eta\rangle, $$
	where we have used that $j_\pi(e,eK)=\id$. Identifying $\eta\in\calV_\pi$ with the functional $\xi\mapsto\langle\xi,\eta\rangle$ in $\calV_\pi^\vee$ and using $\overline{\psi(\widetilde{n}_\CC(g))}=\psi(n_\CC(g)^{-1})$ and $\pi_c(\widetilde{k}_\CC(g))^*=\pi_c(k(g)^{-1})$ shows the claim.
\end{proof}

\begin{remark}
	One can also prove Theorem~\ref{thm:LKTonG/N} without using the previous results on the kernel $\Psi$. In fact, the projections $p^+(g)$, $k_\CC(g)$ and $n_\CC(g)$ satisfy
	\[p^+(k g n ) = kp^+(g)k^{-1},\quad k_\CC (k g n) = k k_\CC (g)\quad\text{and}\quad n_\CC (k g n) = n_\CC (g)n,\]
	implying that
	\[T_{\pi,\eta}(\xi)(k^{-1}gn) = \psi(n)^{-1} T_{\pi,\eta}(\pi_c(k)\xi ).\]
	Hence, $T_{\pi,\eta}(\xi) \in C^\infty(G/N,\psi)$ and $T_{\pi,\eta}$ is a $K$-intertwining operator. Moreover, an $\SU (1,1)$ reduction shows that $T_{\pi,\eta}(\xi)$ is square-integrable. That $T_{\pi,\eta}(\xi )$ extends to a function on $\Xi$ holomorphic on $\Xi^o$ follows from Theorem~\ref{thm:Decomp2}. Finally, $T_{\pi,\eta}(\xi)$ is a highest weight vector since $\frakp^+$ acts trivially. This follows from the $P^+$-invariance of the projections $k_\CC(g)$ and $n_\CC(g)$.
\end{remark}

\begin{corollary}\label{cor:UnitHWContribution}
	Every unitary highest weight representation that occurs in the Plancherel formula for $L^2(G/N,\psi)$ is a holomorphic discrete series representation and hence given in Theorem~\ref{thm:HolDSContribution}.
\end{corollary}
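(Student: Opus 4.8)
The plan is to deduce the corollary from Proposition~\ref{prop:EmbeddingKernelPsi}, which already determines $\Hom_G(\calH^2_\pi,L^2(G/N,\psi))$, together with the temperedness of $L^2(G/N,\psi)$ to exclude any occurrence in the continuous spectrum. First I would record the reduction: a unitary highest weight representation $\sigma$ of $G$ has lowest $K$-type an irreducible representation of $K$, and since $K$ is a compact real form of $L_\CC$ this lifts to an irreducible representation $(\pi,\calV_\pi)$ of $L_\CC$; thus $\sigma$ is the unitary highest weight representation attached to $(\pi,\calV_\pi)$, which is a holomorphic discrete series exactly when $\omega(\pi)>\tfrac{2n}{r}-1$. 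So it suffices to show that $\sigma$ occurs in the Plancherel decomposition of $L^2(G/N,\psi)$ only when $\omega(\pi)>\tfrac{2n}{r}-1$, and that in that case its contribution is the one described in Theorem~\ref{thm:HolDSContribution}.

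For the discrete spectrum this is immediate from Proposition~\ref{prop:EmbeddingKernelPsi}: any $G$-equivariant continuous operator $\calH^2_\pi\to L^2(G/N,\psi)$ has adjoint given by a kernel of the form \eqref{eq:Psi3}, and by the converse part of that proposition such a kernel defines an intertwining operator only if $\calH^2_\pi$ lies in the holomorphic discrete series; hence $\Hom_G(\sigma,L^2(G/N,\psi))=0$ unless $\omega(\pi)>\tfrac{2n}{r}-1$, in which case the resulting embeddings are exhausted by those of Theorem~\ref{thm:HolDSContribution}. To also rule out an occurrence in the continuous part of the Plancherel measure I would invoke that $L^2(G/N,\psi)$ is tempered (Benoist--Kobayashi, recalled in the introduction), so every representation in the support of the Plancherel measure is tempered; by the Enright--Howe--Wallach classification a tempered unitary highest weight representation satisfies $\omega(\pi)\ge\tfrac{2n}{r}-1$, and if $\omega(\pi)=\tfrac{2n}{r}-1$ it is a non-square-integrable limit of holomorphic discrete series. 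Such a representation cannot occur discretely --- the kernel $\widetilde{\Phi}$ of Proposition~\ref{prop:EmbeddingKernelPsi} fails to be square-integrable on $G/N$, as the $\SU(1,1)$-reduction shows --- and, being a proper constituent of a parabolically induced tempered representation only at an isolated value of the continuous induction parameter, while the continuous part of the Plancherel measure is absolutely continuous in that parameter, it contributes with Plancherel mass zero. Hence only holomorphic discrete series occur, as claimed.

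The only genuinely delicate step is this last one, excluding limits of holomorphic discrete series from the continuous spectrum; everything else is bookkeeping around Proposition~\ref{prop:EmbeddingKernelPsi}. If one reads ``occurs in the Plancherel formula'' simply as ``occurs discretely'' --- which is all that is needed for the comparison with Theorem~\ref{thm:HolDSContribution} --- then no temperedness input is required and the corollary is a one-line consequence of Proposition~\ref{prop:EmbeddingKernelPsi}; I would present the proof at that level and append the temperedness argument to cover the stronger reading.
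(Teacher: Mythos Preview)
Your handling of the discrete spectrum via Proposition~\ref{prop:EmbeddingKernelPsi} is correct and matches the paper's approach in spirit (the paper phrases it through Theorem~\ref{thm:LKTonG/N} and the $L^2$-computation in the proof of Theorem~\ref{thm:HolDSContribution}, but this is the same mechanism).

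Where you diverge is in excluding the continuous spectrum. The paper does not use temperedness at all. Instead it observes that the unitary highest weight representations form a \emph{discrete} subset of $\widehat{G}$ (citing \cite[Section~3]{HOO91}); hence any such representation can only carry an atom of Plancherel mass or none at all, i.e.\ it can only occur discretely. This single sentence replaces your entire temperedness paragraph and covers the ``strong reading'' directly.

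Your temperedness route, by contrast, has a genuine gap: the assertion that ``the continuous part of the Plancherel measure is absolutely continuous in that parameter'' is not established anywhere in the paper---the Plancherel formula for $L^2(G/N,\psi)$ is not known here, so you cannot appeal to structural properties of its continuous part. Your fallback to the weak reading is fine, but then you have proved less than the paper claims. The fix is exactly the discreteness-in-$\widehat{G}$ observation above: it is both simpler and stronger than what you wrote, and it removes any need to discuss limits of discrete series or absolute continuity.
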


\begin{proof}
First note that since the unitary highest weight representations form a discrete 
subset of $\widehat{G}$, they can only occur discretely in $L^2(G/N,\psi)$ 
(see also \cite[Section 3]{HOO91}). For a fixed irreducible finite-dimensional 
representation $(\pi,\calV_\pi)$ of $L_\CC$ and $\pi_\nu=\pi\otimes\det^\nu$ as 
in Section~\ref{sec:UnitaryHighestWeightReps}, the formula in Theorem~\ref{thm:LKTonG/N} is 
clearly holomorphic in $\nu$ and hence agrees with the formula for the lowest $K$-type computed in 
the proof of Theorem~\ref{thm:HolDSContribution}. But there it was shown that the lowest 
$K$-type $\pi_\nu$ is $L^2$ on $G/N$ if and only if $\omega(\pi_\nu)>\frac{2n}{r}-1$, i.e. if 
and only if $\pi_\nu$ is a holomorphic discrete series.
\end{proof}

\section{The twisted Hardy space on $L^2(G/N)$}\label{sec:HardySpace}

In this section we discuss the twisted Hardy space of holomorphic $L^2$-sections of the line bundle 
defined by $\psi=\psi_v$. The idea of realizing series of representations in a Hardy space, the 
Gelfand--Gindikin program, goes back to 
the article \cite{GG77} and was then taken up in \cite{O91,S83} for the case of a semisimple Lie 
group. The first step towards Hardy spaces on symmetric spaces was \cite{OO88}, a program that 
was completed in \cite{HOO91}. Since then, several other situations have been considered, in particular 
the case of Riemannian symmetric spaces was studied in \cite{GKO04}, and the space of horocycles $G/MN_{\rm min}$, $P_{\rm min} =M A_{\rm min}N_{\rm min}$ a minimal parabolic subgroup, in \cite{GKO06}. The discussion is brief because the methods and ideas are the
same as in \cite{HOO91} and \cite[Chapter 9]{N00}. We refer to the following Appendix for a more detailed discussion about
some of those ideas.

We realize $G/K$ again as a bounded domain $D\subseteq P^+\subseteq G_\CC/K_\CC P^-$ and define $\Xi$ and $S$ as before, see \eqref{eq:XiSGeneral}. As before, we define $s\mapsto s^* = \overline{s}^{-1}$ where
$G_\CC \to G_\CC,\,g\mapsto\overline{g}$ is the conjugation fixing $G$ pointwise. A continuous representation $\sigma$ of 
$S$ on a Hilbert space $\calH_\sigma$ is said to be a $*$-representation if for all $s\in S$ we have $\sigma(s)^* = \sigma (s^*)$. It is
a holomorphic representation if $s\mapsto \sigma (s)u$ is a holomorphic map $S^o\to \calH_\sigma$ for every $u\in\calH_\sigma$.

The following can be found in \cite[Thm. XI.6.1]{N00}:

\begin{theorem}\label{thm:HolomExtRep}  
A continuous representation $\sigma$ of $S$  is a holomorphic $*$-representation if and only if $\sigma|_G$ is a 
unitary highest weight representation of $G$. 
\end{theorem}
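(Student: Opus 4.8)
We only sketch the argument, since the statement is \cite[Thm.~XI.6.1]{N00}; it is nonetheless instructive to describe how one would prove it in the notation of this paper. Throughout, let $Z_0$ span the one-dimensional centre $\frakz(\frakk)$, normalized so that $\ad(Z_0)$ acts by $\pm i\,\id$ on $\frakp^{\pm}$, so that $\exp(\RR Z_0)$ is a one-parameter subgroup of $K$ and $K$, hence $\exp(\CC Z_0)\subseteq K_\CC$, acts on the bounded domain $D\subseteq P^{+}\cong\frakp^{+}$ linearly (via $\Ad$ on $\frakp^{+}$), with $\exp(\theta Z_0)\cdot w=e^{i\theta}w$. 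In particular $\exp(t(-iZ_0))$ \emph{expands} $D$ by the factor $e^{t}$ while its inverse $\exp(itZ_0)$ maps $\overline{D}$ into $D$ for $t>0$, so $s_t:=\exp(t(-iZ_0))$ lies in $S^{o}$; moreover $s_t^{*}=s_t$, because the conjugation defining $s\mapsto s^{*}$ is anti-linear and fixes $Z_0$.

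For the implication ``holomorphic $*$-representation $\Rightarrow$ unitary highest weight representation'' the plan is as follows. Restricting a continuous holomorphic $*$-representation $\sigma$ of $S$ to $G\subseteq S$ gives a unitary representation, since $g^{*}=g^{-1}$ for $g\in G$. Next, $t\mapsto\sigma(s_t)$ is a one-parameter semigroup of positive self-adjoint operators, holomorphic for $t>0$ and strongly continuous up to $t=0$ with $\sigma(s_0)=\id$; by the spectral theorem it equals $e^{-tB}$ for a self-adjoint $B$ which is bounded below (each $\sigma(s_t)$ being a bounded operator). Differentiating at $t=0^{+}$ identifies the generator on analytic vectors, so $-i\,d\sigma(Z_0)=-B$ is bounded above on the dense subspace of $K$-finite vectors. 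Since $[Z_0,\frakp^{+}]=i\,\frakp^{+}$, the space $\frakp^{+}$ raises the eigenvalue of $-i\,d\sigma(Z_0)$ by one and preserves $K$-finiteness, so iterating $\frakp^{+}$ on any nonzero $K$-finite vector must terminate; this forces $\calH_\sigma^{\frakp^{+}}\neq 0$, whence every irreducible component of $\sigma|_G$ is a unitary highest weight representation with highest weight space an extreme $K$-type.

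For the converse I would realize a given unitary highest weight representation in the bounded domain model $\calH_\pi^{2}(D)$ of Section~\ref{sec:HolDS}, where $G$ acts by $\sigma(g)f(w)=\nu_\pi(g^{-1},w)f(g^{-1}w)$ with $\nu_\pi$ a holomorphic cocycle. For $s\in S=\Xi^{-1}$ the map $s^{-1}$ sends $D$ into $D$ and $\nu_\pi$ extends holomorphically, so the same formula defines a bounded operator $\sigma(s)$ depending holomorphically on $s\in S^{o}$; multiplicativity is then the cocycle identity together with density of $D$. The relation $\sigma(s)^{*}=\sigma(s^{*})$ I would verify on the total set of reproducing kernels $K_w\xi$, using the transformation behaviour of the reproducing kernel of $\calH_\pi^{2}(D)$ under $S$, the identity $(P^{-})^{*}=P^{+}$, and the decomposition $S\subseteq P^{-}K_\CC N_\CC$ of Remark~\ref{rem:55}. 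The reason this succeeds is that, for a \emph{highest weight} representation, $-i\,d\sigma(Z_0)$ — and more generally $d\sigma$ of the maximal invariant cone — is bounded in the sign making the operators $\sigma(s_t)$ contractive, which is precisely the input needed for the general semigroup machinery of \cite[Ch.~XI]{N00} (or, in a self-contained treatment, for a reduction to $\SU(1,1)$ via the commuting homomorphisms $\varphi_j$ and the explicit computations around Theorem~\ref{lem:NKPdecompositionSU11}) to produce a holomorphic $*$-representation of $S$.

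The hard part is clearly this second direction: one must check that the naive formulas for $\sigma(s)$ are not merely well defined but genuinely multiplicative and jointly holomorphic on the \emph{non-group} semigroup $S$, and that they intertwine $s\mapsto s^{*}$ with the Hilbert-space adjoint — it is exactly here that the positivity built into the notion of a unitary highest weight representation enters essentially. The first direction, by contrast, is an elementary consequence of the spectral theorem, the only structural input being the Hermitian elements $s_t\in S^{o}$ exhibited above.
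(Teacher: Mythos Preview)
The paper does not give a proof of this theorem; it simply states the result and attributes it to \cite[Thm.~XI.6.1]{N00}. So there is nothing in the paper to compare your argument against beyond the bare citation, which you already reproduce.

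Your sketch is a faithful outline of the standard argument (and of how Neeb's proof proceeds): for the forward direction you extract semiboundedness of $-i\,d\sigma(Z_0)$ from the positive self-adjoint one-parameter semigroup $\sigma(s_t)$, $s_t=\exp(-itZ_0)\in S^o$, and then use that $\frakp^+$ raises the $(-i\,d\sigma(Z_0))$-eigenvalue to force $\calH_\sigma^{\frakp^+}\neq0$; for the backward direction you extend the action formula in the bounded domain model holomorphically to $S$ via the contraction property $\Xi\cdot D\subseteq D$, and verify the $*$-relation on reproducing-kernel vectors. Both directions are correctly identified and the structural inputs (the Hermitian element $-iZ_0$ in the cone, the cocycle, the reproducing kernel) are the right ones.

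One small caveat worth recording: the Hilbert space $\calH_\pi^2(D)$ with the \emph{integral} inner product of Section~\ref{sec:HolDS} only exists when $\omega(\pi)>\tfrac{2n}{r}-1$, i.e.\ in the holomorphic discrete series range. For a general unitary highest weight representation on the Berezin--Wallach set the correct model is the reproducing kernel Hilbert space completion of the $\calV_\pi$-valued polynomials (same kernel $K_\pi$, but the norm is no longer an integral). The extension argument via the cocycle $\nu_\pi$ and the reproducing kernel transformation law goes through unchanged in that setting, so your sketch survives, but the phrase ``realize \dots\ in the bounded domain model $\calH_\pi^2(D)$ of Section~\ref{sec:HolDS}'' should be read with this adjustment.
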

 
Motivated by the construction in \cite{HOO91} (see also the Appendix), we define the twisted Hardy space $\calH_2(\Xi,\psi_v)$ over $G/N$ to 
be the Hilbert space of holomorphic functions $F$ on $\Xi$ satisfying:
\begin{enumerate}
	\item For all $\xi \in \Xi$ and $n\in N$ we have
	$F (\xi n) = \psi_v(n)^{-1}F(\xi )$.
	\item For every $s\in S^o$ the function $g\mapsto F_s(g)=F(s^{-1}g)$ is in $L^2(G/N,\psi_v)$ and
	the $L^2$-limit
	\[ \beta(F) = \lim_{S^o\ni s\to g}F_s \in L^2(G/N,\psi_v) \]
	exists in $L^2(G/N,\psi_v)$.
\end{enumerate}
The inner product on $\calH_2(\Xi,\psi_v)$ is given by
$$ \langle F,G\rangle_{\calH_2(\Xi,\psi_v)} = \langle\beta(F),\beta(G)\rangle_{L^2(G/N,\psi_v)}. $$

The closed semigroup $S$ acts on $\calH_2 (\Xi,\psi_v)$ by
\[s\cdot F(\xi ) = F(s^{-1}\xi) \qquad (s\in S,\xi\in\Xi)\]
and the boundary value map
$$ \beta:\calH_2(\Xi,\psi_v)\to L^2(G/N,\psi_v) $$
is a $G$-equivariant isometry. We also note that
$\calH_2(\Xi,\psi_v)\not=\{0\}$ because each of the functions $T_{\pi,\eta}(\xi)$ is in $\calH_2(\Xi,\psi_v)$ and
$\beta (T(\xi )) = T_{\pi,\eta}(\xi)|_{G/N}$. Each of those functions generates an $S$-invariant subspace in $\calH_2(\Xi,\psi_v)$.
The representation of $S$ in $\calH_2(\Xi,\psi_v)$ is holomorphic. Hence, Theorem~\ref{thm:HolDSContribution}, Corollary~\ref{cor:UnitHWContribution} and Theorem~\ref{thm:HolomExtRep} imply that
the Hardy space is a direct sum of holomorphic discrete series, each occurring with multiplicity equal to the dimension of the lowest $K$-type.
To be more specific, the following construction applies to all of the
holomorphic discrete series $(J_\pi,\calH_\pi^2)$ leading to Theorem \ref{thm:HardyDec}:
We extend the matrix coefficient map 
$\iota_\pi$ from \eqref{eq:DefIotaPi} to $\calH_2(\Xi,\psi_v)$. For $f\in\calH^2_\pi$ and $W\in(\calH_\pi^2)^{-\infty,\psi_v}$ we let
\[ \iota_\pi(f\otimes W)(\xi) = W(J_\pi(\xi^{-1})f), \quad \xi \in \Xi= S^{o-1}, \]
using that by Lemma~\ref{lem:SemigroupMapsIntoSmoothVectors} we have $J_\pi(\xi^{-1})f\in(\calH_\pi^2)^\infty$. Note that the function $\iota_\pi(f\otimes W)$ satisfies the covariance relation
\begin{align*}
	\iota_\pi(f\otimes W) (\xi n)  & = W(J_\pi(n)^{-1}J_\pi(\xi^{-1})f) = \psi_v(n)^{-1}\iota_\pi(f\otimes W)(\xi).
\end{align*}

\begin{theorem}\label{thm:HardyDec}
	Let the notation be as above. Then the following holds:
	\begin{enumerate}
		\item If $\omega(\pi) > \frac{2n}{r}-1$, then
		$\iota_\pi(\calH^2_\pi\otimes(\calH^2_\pi)^{-\infty,\psi_v}) \subseteq \calH_2(\Xi,\psi_v)$ and the map $\iota_\pi:\calH^2_\pi\otimes(\calH^2_\pi)^{-\infty,\psi_v} \to \calH_2(\Xi,\psi_v)$ is
		an $S$-intertwining operator.
		\item We have
		\[\calH_2 (\Xi ,\psi_v) = \widehat{\bigoplus_{\pi}}\,\,\iota_\pi(\calH^2_\pi\otimes(\calH^2_\pi)^{-\infty,\psi_v}),\]
		where the sum is taken over all $\pi$ with $\omega(\pi)> \frac{2n}{r} -1$
	\end{enumerate}
\end{theorem}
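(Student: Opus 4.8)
The plan is to treat the two assertions in turn: part~(1) by checking directly that $\iota_\pi(f\otimes W)$ satisfies the two defining conditions of $\calH_2(\Xi,\psi_v)$ and is $S$-intertwining, and part~(2) by an abstract decomposition argument that combines the holomorphic extension theory of the semigroup $S$ with the results of Section~\ref{sec:DiscreteSpectrumL2GN}.

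For part~(1), the starting point is that, by Theorem~\ref{thm:HolomExtRep} together with Lemma~\ref{lem:SemigroupMapsIntoSmoothVectors}, the assignment $\xi\mapsto J_\pi(\xi^{-1})f$ is a holomorphic map from $\Xi^o$ into the smooth vectors $(\calH_\pi^2)^\infty$; pairing it with the continuous functional $W\in(\calH_\pi^2)^{-\infty,\psi_v}$ shows that $\iota_\pi(f\otimes W)$ is holomorphic on $\Xi$, and the $N$-covariance is the computation already recorded before the theorem. The second defining condition rests on the identity
$$ s\cdot\iota_\pi(f\otimes W)=\iota_\pi\big((J_\pi(s)f)\otimes W\big)\qquad(s\in S), $$
which follows from $J_\pi(\xi^{-1}s)=J_\pi(\xi^{-1})J_\pi(s)$. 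Restricting to $G$, the translate $\iota_\pi(f\otimes W)_s$ (for $s\in S^o$) is the function $g\mapsto W\big(J_\pi(g)^{-1}J_\pi(s)f\big)$, which is exactly the image of $(J_\pi(s)f)\otimes W$ under the map $\iota_\pi$ of Theorem~\ref{thm:HolDSContribution} and hence lies in $L^2(G/N,\psi_v)$. Since $J_\pi$ is a strongly continuous $*$-representation of $S$, we have $J_\pi(s)f\to f$ in $\calH_\pi^2$ as $s\to e$ within $S^o$, and since the embedding of Theorem~\ref{thm:HolDSContribution} is bounded, these translates converge in $L^2(G/N,\psi_v)$ to the image of $f\otimes W$ under that embedding. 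Thus $\iota_\pi(f\otimes W)\in\calH_2(\Xi,\psi_v)$, with boundary value $\beta\big(\iota_\pi(f\otimes W)\big)$ equal to that image, and the displayed identity says precisely that $\iota_\pi$ is $S$-intertwining.

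For part~(2), part~(1) shows that $\beta\circ\iota_\pi$ coincides with the $L^2$-embedding of Theorem~\ref{thm:HolDSContribution}; since $\beta$ is a $G$-equivariant isometry, it follows that $\beta$ carries $\widehat{\bigoplus_{\pi}}\iota_\pi\big(\calH_\pi^2\otimes(\calH_\pi^2)^{-\infty,\psi_v}\big)$ isometrically onto the holomorphic discrete series contribution of $L^2(G/N,\psi_v)$, these summands being mutually orthogonal as isotypic components for pairwise non-isomorphic representations. It remains to see that they exhaust $\calH_2(\Xi,\psi_v)$. Since the action of $S$ on $\calH_2(\Xi,\psi_v)$ is a holomorphic $*$-representation, Theorem~\ref{thm:HolomExtRep} shows that its restriction to $G$ is a unitary highest weight representation; as the unitary highest weight representations of $G$ form a discrete subset of $\widehat G$, this restriction decomposes as a Hilbert direct sum of irreducible unitary highest weight representations, and transporting through $\beta$ and applying Corollary~\ref{cor:UnitHWContribution} forces every summand to be a holomorphic discrete series $\calH_\pi^2$ with $\omega(\pi)>\frac{2n}{r}-1$. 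Finally, Frobenius reciprocity (as in the proof of Theorem~\ref{thm:HolDSContribution}, see also Proposition~\ref{prop:EmbeddingKernelPsi}) gives $\dim\Hom_G\big(\calH_\pi^2,\calH_2(\Xi,\psi_v)\big)\le\dim\Hom_G\big(\calH_\pi^2,L^2(G/N,\psi_v)\big)=\dim\calV_\pi$, while part~(1) already furnishes $\dim\calV_\pi$ linearly independent such intertwiners; hence $\iota_\pi\big(\calH_\pi^2\otimes(\calH_\pi^2)^{-\infty,\psi_v}\big)$ is exactly the $\calH_\pi^2$-isotypic component of $\calH_2(\Xi,\psi_v)$, and summing over $\pi$ yields the claimed decomposition.

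The main obstacle lies in part~(1): one has to reconcile the holomorphic picture of $F=\iota_\pi(f\otimes W)$ on $\Xi$ with the $L^2$-boundary picture on $G/N$, that is, verify that the translates $F_s$ genuinely converge in $L^2(G/N,\psi_v)$ rather than only pointwise, and that $F$ is holomorphic up to $\Xi$. Both hinge on the smoothing property of the semigroup action and the strong continuity of the holomorphic $*$-representation near the boundary $G\subseteq\partial S^o$, which are the standard technical inputs of the Hardy space machinery recalled in the Appendix and in \cite{N00}. Once these are in place, part~(2) reduces to bookkeeping with multiplicities and the orthogonality of isotypic components.
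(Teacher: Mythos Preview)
Your proposal is correct and follows essentially the same approach as the paper. The paper does not give a formal proof after the theorem statement; instead, the argument is the brief discussion preceding it, which invokes Theorem~\ref{thm:HolomExtRep}, Theorem~\ref{thm:HolDSContribution} and Corollary~\ref{cor:UnitHWContribution} in exactly the way you do, and defers the technical verification of the Hardy space axioms to \cite{HOO91} and \cite[Chapter~9]{N00}. Your write-up simply makes those steps explicit: the $S$-covariance identity $s\cdot\iota_\pi(f\otimes W)=\iota_\pi\big((J_\pi(s)f)\otimes W\big)$, the passage through the bounded $L^2$-embedding of Theorem~\ref{thm:HolDSContribution} to get the $L^2$-limit $\beta$, and the multiplicity bookkeeping via Frobenius reciprocity, all of which the paper leaves implicit in its references to the standard Hardy space machinery.
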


\appendix

\section{Hardy spaces}\label{App:Hardy}

To put our results in Section~\ref{sec:HardySpace} into perspective, we provide a general discussion about the Hardy space of a semisimple Hermitian Lie group and the Hardy space of a symmetric space of Hermitian type. We refer to \cite{HOO91, O00} 
and the book \cite{N00}, in particular Chapters XI, XII and  XIV, for details. The notation is as in the main part of the paper.\\

We start by discussing the original idea \cite{GG77} based on the ideas from \cite{O91}. For a function $F:  \Xi^o \to \CC$ let
 $F_s (g) =F (s^{-1}g)$.
Define the Hardy space on $\Xi$ to be the Hilbert space
\[ \calH_2(\Xi) = \{F\in \calO (\Xi^o ) \mid (\forall s  \in S^o) \, F_s \in L^2(G)\text{ and }
\lim_{S^o\ni s\to e}F_{s}  \text{ exists in }L^2(G)\}. \]
Note that $S$ acts on $\calH_2(\Xi )$ by
$s\cdot F(\xi ) = F(s^{-1}\xi)$. We define the boundary value map $\beta : \calH_2(\Xi )\to L^2(G)$ by 
\[\beta (F) = \lim_{S^o\ni s\to e} F_s.\]

\begin{lemma}\label{le:holods}
Let $\pi$ be so that $\omega (\pi)>\frac{2n}{r}-1$ and denote by $(J_\pi,\calH_\pi^2)$ the holomorphic discrete series representation with lowest $K$-type $\pi$.
	\begin{enumerate}
		\item The representation $J_\pi$ extends to a holomorphic ${}^*$-representation of $S$.
		\item For every $u\in \calH_\pi^2 $ and $\lambda \in (\calH_\pi^2)^*$, the function $\Pi_\pi (u\otimes \lambda)(g) =\lambda (J_\pi(g^{-1})u)$ extends to a holomorphic function on $\Xi$ which is contained in $\calH_2(\Xi)$, and $\beta(\Pi_\pi(u\otimes \lambda))\in L^2 (G)$.
		\item For a fixed $\lambda \in(\calH_\pi^2)^*$, the map $u\mapsto\Pi_\pi(u\otimes\lambda)$
		is an $S$-homorphism $\calH_\pi^2 \to \calH_2(\Xi)$.
		\item $\calH_2 (\Xi) = \widehat{\bigoplus}_{\pi} \Pi_\pi (\calH_\pi^2 \otimes (\calH_\pi^2)^*)$.
		\item The contribution of the holomorphic discrete series to $L^2(G)$ is
		\[\beta (\calH_2 (\Xi))= \widehat{\bigoplus_\pi}\,\Pi_\pi\big(\calH_\pi^2\otimes(\calH_\pi^2)^*\big)|_G .\]
	\end{enumerate}
\end{lemma}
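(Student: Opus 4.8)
The plan is to establish the five items in order, building the decomposition of $\calH_2(\Xi)$ out of the matrix-coefficient maps $\Pi_\pi$ and then identifying $\beta(\calH_2(\Xi))$ with the holomorphic discrete series part of $L^2(G)$ via Theorem~\ref{thm:HolomExtRep}. Item~(1) is immediate from Theorem~\ref{thm:HolomExtRep}, since for $\omega(\pi)>\frac{2n}{r}-1$ the representation $J_\pi$ is a unitary highest weight representation; alternatively one argues directly in the reproducing-kernel model on $D$, where $s\in S$ maps $D$ into $D$, the cocycle $j_\pi$ continues holomorphically to $\Xi$, and contractivity of $J_\pi(s)$ for $s\in S^o$ follows from positive definiteness of the reproducing kernel.

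For~(2) and~(3) I would write $\lambda=\langle\,\cdot\,,w\rangle$ with $w\in\calH_\pi^2$, so that $\Pi_\pi(u\otimes\lambda)(g)=\langle J_\pi(g^{-1})u,w\rangle$. Since $\xi\mapsto J_\pi(\xi^{-1})u$ is holomorphic on $\Xi^o=(S^o)^{-1}$ by~(1), the function $\Pi_\pi(u\otimes\lambda)$ is holomorphic on $\Xi^o$ and continuous on $\Xi$. For $s\in S^o$ one computes $\big(\Pi_\pi(u\otimes\lambda)\big)_s(g)=\langle J_\pi(g^{-1})\big(J_\pi(s)u\big),w\rangle$, a matrix coefficient of the discrete series representation $J_\pi|_G$; by Schur orthogonality it lies in $L^2(G)$ with $\big\|\big(\Pi_\pi(u\otimes\lambda)\big)_s\big\|_{L^2(G)}^2=d_\pi^{-1}\|J_\pi(s)u\|^2\|w\|^2$, $d_\pi$ the formal degree. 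As $S^o\ni s\to e$ we have $J_\pi(s)u\to u$ in $\calH_\pi^2$ by continuity of the ${}^*$-representation up to the boundary, so the net $\big(\Pi_\pi(u\otimes\lambda)\big)_s$ is Cauchy and converges in $L^2(G)$ to $g\mapsto\langle J_\pi(g^{-1})u,w\rangle$, which is therefore the boundary value; this gives~(2). The intertwining relation $\Pi_\pi(J_\pi(s)u\otimes\lambda)=s\cdot\Pi_\pi(u\otimes\lambda)$ in~(3) is a one-line computation, and boundedness of $u\mapsto\Pi_\pi(u\otimes\lambda)$ follows from $\|\Pi_\pi(u\otimes\lambda)\|_{\calH_2(\Xi)}=\|\beta(\Pi_\pi(u\otimes\lambda))\|_{L^2(G)}=d_\pi^{-1/2}\|u\|\,\|\lambda\|$.

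For~(4) the inclusion $\supseteq$ is immediate from~(2)--(3); the images $\Pi_\pi(\calH_\pi^2\otimes(\calH_\pi^2)^*)$ are closed (scalar multiples of isometric copies of $\calH_\pi^2\otimes(\calH_\pi^2)^*$) and, by Schur orthogonality for the boundary values, mutually orthogonal in $L^2(G)$ and hence in $\calH_2(\Xi)$. For the reverse inclusion I would first show that $\beta$ is injective, hence an isometry onto its image, using the maximum-principle estimate $\|F_s\|_{L^2(G)}\le\|\beta(F)\|_{L^2(G)}$ for $s\in S^o$. Then $\beta$ intertwines the $S$-action on $\calH_2(\Xi)$ with an $S$-action on the closed subspace $\beta(\calH_2(\Xi))\subseteq L^2(G)$ restricting on $G$ to the left regular representation; this $S$-action is a holomorphic ${}^*$-representation (contractive because $\|s\cdot F\|\le\|F\|$, holomorphic because $s\mapsto F(s^{-1}\,\cdot\,)$ is), so Theorem~\ref{thm:HolomExtRep} forces the left regular representation on $\beta(\calH_2(\Xi))$ to decompose discretely into unitary highest weight representations. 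A unitary highest weight representation embeds into $L^2(G)$ precisely when it is square-integrable, i.e. when it is a holomorphic discrete series, and then occurs under left translation with multiplicity $\dim\calH_\pi^2$; comparing with the multiplicity $\dim(\calH_\pi^2)^*$ already produced by the injective map $\lambda\mapsto\Pi_\pi(\,\cdot\otimes\lambda)$ shows the $\Pi_\pi(\calH_\pi^2\otimes(\calH_\pi^2)^*)$ fill the full isotypic components, giving~(4). Transporting this equality through $\beta$ yields~(5), namely $\beta(\calH_2(\Xi))=\widehat\bigoplus_\pi\Pi_\pi(\calH_\pi^2\otimes(\calH_\pi^2)^*)|_G$, which is exactly the holomorphic discrete series contribution to $L^2(G)$.

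The main obstacle is entirely the "soft" analytic input needed in~(4): that $\beta$ is well defined and isometric (equivalently, that $\|F_s\|_{L^2(G)}$ is controlled uniformly in $s$, converges, and that $\beta$ is injective so the inner product on $\calH_2(\Xi)$ is definite) and that $S$ acts on $\calH_2(\Xi)$ by a holomorphic ${}^*$-representation, so that Theorem~\ref{thm:HolomExtRep} applies. These are exactly the points where the Gelfand--Gindikin machinery does real work, and for the details I would follow \cite{HOO91} and \cite{N00}; once they are in hand, everything else follows formally from Theorem~\ref{thm:HolomExtRep} and Schur orthogonality for the discrete series.
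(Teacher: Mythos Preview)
The paper does not actually prove this lemma: Lemma~\ref{le:holods} sits in Appendix~\ref{App:Hardy}, which is explicitly a survey of known results, with the blanket reference ``We refer to \cite{HOO91, O00} and the book \cite{N00}, in particular Chapters XI, XII and XIV, for details'' (and the original sources \cite{O91,S83} for the group case). No argument is given in the paper itself.

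Your sketch is in line with the standard proof found in those references. You correctly invoke Theorem~\ref{thm:HolomExtRep} for~(1), Schur orthogonality for the discrete series for~(2)--(3), and the key structural point for~(4)--(5): the $S$-action on $\calH_2(\Xi)$ is a holomorphic ${}^*$-representation, so by Theorem~\ref{thm:HolomExtRep} its restriction to $G$ decomposes into unitary highest weight representations, and among those only the holomorphic discrete series embed into $L^2(G)$. You also correctly isolate the genuine analytic content---well-definedness and isometry of $\beta$, and the holomorphic ${}^*$-property of the $S$-action on the Hardy space---and defer it to \cite{HOO91,N00}, which is exactly what the paper does. There is nothing to correct; your outline matches the literature the appendix is summarizing.
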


As mentioned above, the next step was the extension of this theory to compactly causal symmetric spaces, originally also
called symmetric spaces of Hermitian type. For that, the following is helpful, even if it was not used in
the original work.

\begin{lemma}\label{lem:SemigroupMapsIntoSmoothVectors}
	Let $s \in S^o $ and $u\in\calH_\pi^2$. Then $J_\pi(s) u\in(\calH_\pi^2)^{\infty } $.
\end{lemma}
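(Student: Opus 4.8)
The plan is to deduce this purely formally from the fact that $J_\pi$ extends to a holomorphic ${}^*$-representation of the semigroup $S$ (Theorem~\ref{thm:HolomExtRep}, see also Lemma~\ref{le:holods}~(1)) together with the $G$-invariance of the open subsemigroup $S^o$. Recall that by the definition of a holomorphic representation of $S$, for every $u\in\calH_\pi^2$ the orbit map $t\mapsto J_\pi(t)u$ is holomorphic on the open subset $S^o\subseteq G_\CC$; in particular it is smooth as a map between real manifolds.

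First I would record that $S^o$ is invariant under left translation by $G$. Since $G$ acts on the bounded domain $D$ by biholomorphisms, the set $\Xi=\{s\in G_\CC:s\cdot D\subseteq D\}$ from \eqref{eq:XiSGeneral} satisfies $g\xi\cdot D\subseteq g\cdot D=D$ and $\xi g\cdot D=\xi\cdot(gD)=\xi\cdot D\subseteq D$ for all $g\in G$, $\xi\in\Xi$, hence $\Xi$, and therefore also $S=\Xi^{-1}$, is invariant under left and right multiplication by $G$. For $g\in G$ this yields $gS\subseteq S$ and $g^{-1}S\subseteq S$, so $gS=S$; as left translation by $g$ is a homeomorphism of $G_\CC$, it maps the interior $S^o$ of $S$ onto itself. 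In particular $Gs\subseteq S^o$ for every $s\in S^o$.

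Now fix $s\in S^o$ and $u\in\calH_\pi^2$. Using $G\subseteq S$ and the multiplicativity of the semigroup representation, the assignment
\[ G\longrightarrow\calH_\pi^2,\qquad g\longmapsto J_\pi(g)\bigl(J_\pi(s)u\bigr)=J_\pi(gs)u \]
factors as the composition of the smooth map $G\to S^o$, $g\mapsto gs$ (a restriction of the multiplication of $G_\CC$ whose image lies in the open submanifold $S^o$) with the smooth orbit map $S^o\to\calH_\pi^2$, $t\mapsto J_\pi(t)u$. Hence $g\mapsto J_\pi(g)(J_\pi(s)u)$ is smooth, which is exactly the assertion that $J_\pi(s)u\in(\calH_\pi^2)^\infty$.

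The only point that deserves to be stated explicitly — and the only place where more than formal manipulation is used — is that $G$ lies in the boundary of $S^o$ (indeed $G\cap S^o=\emptyset$), so one cannot simply restrict the holomorphic orbit map to $G$; instead one uses that $G$ translates $S^o$ into $S^o$, which is where the $G$-invariance of $S$ enters. Beyond that I do not expect any genuine obstacle.
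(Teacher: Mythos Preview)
Your argument is correct and more self-contained than the paper's. The paper simply cites \cite[Proposition~A.5]{KNO97}, which asserts the stronger inclusion $J_\pi(s)\calH_\pi^2\subseteq(\calH_\pi^2)^\omega$ into the \emph{analytic} vectors, and then observes $(\calH_\pi^2)^\omega\subseteq(\calH_\pi^2)^\infty$. Your route avoids the external reference altogether: you use only the defining holomorphy of the orbit map on $S^o$ together with the observation $GS^o=S^o$, and compose these to obtain smoothness of $g\mapsto J_\pi(gs)u$ directly. The trade-off is that the cited result yields analyticity (a stronger conclusion that could be useful elsewhere), whereas your argument gives exactly what the lemma states and nothing more, but with no black box. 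For the purposes of this lemma your proof is arguably preferable, since the full strength of analyticity is never used in the paper.
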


\begin{proof}
	By \cite[Proposition A.5]{KNO97} we have $J_\pi(s)\calH_\pi^2\subseteq(\calH_\pi^2)^\omega$, and $(\calH_\pi^2)^\omega\subseteq(\calH_\pi^2)^\infty$.
\end{proof}

Recall that the group $G$ acts on $(\calH_\pi^2)^{-\infty}$ by
\[(J_\pi^{-\infty} (g)\lambda)(u) = \lambda (J_\pi (g^{-1})u),\quad u\in (\calH_\pi^2)^\infty, g\in G.\]
This action extends to $S$ by
\[(J_\pi^{-\infty}(s)\lambda)(u) = \lambda (J_\pi (s^*)u)\]
and the function $\Xi^o \to \CC$, $\xi \mapsto \lambda (J_\pi(\overline{\xi} )u)$, is anti-holomorphic.\\

Now assume that $G/H$ is a symmetric space such that the tangent space $T_{eH}(G/H)$ contains an open, generating 
$H$-invariant elliptic cone. Thus, by denoting the corresponding involution on $G$ and its Lie algebra by $\tau $ and writing
$\frakg = \frakq\oplus \frakh$ with $\frakq = \frakg^{-\tau}$ and $\frakh = \frakg^\tau$, the 
assumption is that $\frakq$ contains an open generating
$H$-invariant cone $C$ such that the spectrum of $\ad X$, $X\in C$, is contained in $i\RR$. The cone $C$ then
extends to a $G$-invariant cone in $\frakg$ and $G/K$ is a bounded 
symmetric domain. In particular, the semigroup $S$ is defined. Assume that $H$ is a real form of a complex group $H_\CC\subseteq G_\CC$. Let $x_o = eH_\CC \in G_\CC/H_\CC$ and
define $\Xi_H=\Xi x_o =\Xi H\subseteq G_\CC/H_\CC$. Then $\Xi_H^o = \Xi^ox_o$ is a $G$-invariant open domain in
$G_\CC/H_\CC$ containing the symmetric space $G/H$ in the boundary. For a holomorphic function $F$ on $\Xi_H^o$ let 
$s\cdot F (gH) = F_s (gH) = F(s^{-1}gH)$. We can now define the Hardy space $\calH_2(\Xi_H)$ as the space of holomorphic functions $F$ on $\Xi_H$ such that
\begin{enumerate}
	\item $\forall s\in S^o$ the function $ s\cdot F\in L^2(G/H)$,
	\item $\sup_{s\in S^o} \|s \cdot F\|_{L^{2}(G/H)}<\infty$.
\end{enumerate}

Then the boundary value map $\beta : \calH_2 (\Xi_H) \to L^2(G/H)$ is defined by
\[\beta (F) = \lim_{S^o\ni s\to e}  F_s, \]
where the limit exists in $L^2(G/H)$, and $\calH_2(\Xi_H)$ is a Hilbert space with inner product
\[\ip{F}{G}_{\calH_2(\Xi_H)} = \ip{\beta (F) }{\beta (G)}_{L^2(G/H)}.\]

For $u\in \calH_\pi^2$ and $\lambda \in (\calH_\pi^2)^{-\infty,H}$ let
\[\Pi_{H,\pi,\lambda} (u)(\xi ) = \lambda (J_\pi (\xi^{-1} )u),\quad \xi \in \Xi.\]
Then for $h\in H$
\begin{align*}
	\Pi_{H,\pi,\lambda} (u)(\xi h) & = \lambda (J_\pi (h^{-1})J_\pi (\xi ^{-1}) u) = (J_\pi^{-\infty} (h)\lambda )(J_\pi (\xi^{-1})u) = \Pi_{H,\pi,\lambda} (u)(\xi ) 
\end{align*}
as $\lambda$ is $H$-invariant. The function $\Pi_{H,\pi,\lambda} (u)$ is holomorphic and right $H$-invariant, hence also right $H_\CC$-invariant, so $\Pi_{H,\pi,\lambda}(u)$ defines a holomorphic function on $\Xi_H$. We also use the notation $\Pi_{H,\pi,\lambda} (u) (g)=\Pi_{H,\pi,\lambda}(u) (gH)$ for $u\in\calH_\pi^2$ and $g\in G$.

The following lemma is first proved first for $\SU (1,1)$ and then generalized to $G_\CC/K_\CC$ using $\SU(1,1)$ reduction.
\begin{lemma} We have $\Xi \subset P^+ K_\CC H_\CC$.
\end{lemma}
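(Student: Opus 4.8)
The plan is to follow, essentially verbatim, the two‑step scheme of the proof of Theorem~\ref{thm:Decomp2}, with $N_\CC$ replaced by $H_\CC$: first establish $G\subseteq P^+K_\CC H_\CC$ by an $\SU(1,1)$‑reduction, and then pass to all of $\Xi$ by the $*$‑trick. Throughout, assume (as one may after conjugating $\tau$) that $\theta\tau=\tau\theta$. For the passage from $G$ to $\Xi$: since $K_\CC$ normalizes $P^+$ we have $K_\CC P^+=P^+K_\CC$, so $P^+K_\CC H_\CC$ is stable under left multiplication by $K_\CC$ and under right multiplication by $H_\CC$, and $P^+K_\CC\cdot P^+K_\CC=P^+K_\CC$. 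Granting $G\subseteq P^+K_\CC H_\CC$, let $s\in\Xi$; as in the proof of Theorem~\ref{thm:Decomp2} one has $\Xi^*=\Xi$ and $(P^-)^*=P^+$, so $s^*\in\Xi$, hence $s^*\cdot 0\in D$ where $0$ is the base point of $D$. Choosing $g\in G$ with $g^{-1}s^*\cdot 0=0$ and using that the stabilizer of $0$ in $G_\CC$ is $K_\CC P^-$, write $g^{-1}s^*=kp^-$ with $k\in K_\CC$, $p^-\in P^-$; then
\[ s=(p^-)^*k^*g^{-1}\in P^+K_\CC\, G\subseteq P^+K_\CC\, P^+K_\CC H_\CC=P^+K_\CC H_\CC, \]
which is exactly the step used for $\Xi\subseteq P^+K_\CC N_\CC$.

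It remains to prove $G\subseteq P^+K_\CC H_\CC$. Let $\fraka_\frakq\subseteq\frakp\cap\frakq$ be maximal abelian and $A_\frakq=\exp\fraka_\frakq$. By the polar‑type decomposition $G=KA_\frakq H$ and the invariance properties above it suffices to show $A_\frakq\subseteq P^+K_\CC H_\CC$, and this is where the $\SU(1,1)$‑reduction enters. Using the structure theory of compactly causal symmetric spaces, choose a maximal system of strongly orthogonal (noncompact) roots $\beta_1,\dots,\beta_\ell$ in $\Sigma(\frakg,\fraka_\frakq)$ together with $\sl(2)$‑triples $(H_j,E_j,F_j)$, $H_j\in\fraka_\frakq$, $\fraka_\frakq=\bigoplus_{j}\RR H_j$, generating pairwise commuting subgroups $G_j=\varphi_j(\SU(1,1))$, where the $\varphi_j$ are chosen to intertwine the Cartan involution and $\tau$ with the corresponding involutions on $\SU(1,1)$. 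Then the complexified homomorphisms satisfy $\varphi_{j,\CC}(P_0^+)\subseteq P^+$, $\varphi_{j,\CC}(K_{0,\CC})\subseteq K_\CC$ and $\varphi_{j,\CC}(H_{0,\CC})\subseteq H_\CC$, where the subscript $0$ refers to the corresponding subgroups of $\SU(1,1)$. Every element of $A_\frakq$ factors as $\prod_j\varphi_j(a_j)$ with $a_j$ in the $\SU(1,1)$‑analogue $A_{\frakq,0}$ of $A_\frakq$, so, applying the $\SU(1,1)$ base case $A_{\frakq,0}\subseteq P_0^+K_{0,\CC}H_{0,\CC}$ — a direct $2\times 2$ matrix computation in the spirit of Lemma~\ref{lem:NKPdecomp1} and Theorem~\ref{lem:NKPdecompositionSU11} — and using that the groups $\varphi_{j,\CC}(\SL(2,\CC))$ pairwise commute (so the $P^+$‑, $K_\CC$‑ and $H_\CC$‑factors coming from different indices may be collected separately), one obtains $A_\frakq\subseteq P^+K_\CC H_\CC$.

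The genuinely delicate point is the compatible choice in the second step: arranging a maximal system of strongly orthogonal roots for $\fraka_\frakq\subseteq\frakp\cap\frakq$ together with $\sl(2)$‑triples whose homomorphisms $\varphi_j$ respect both $\theta$ and $\tau$, so that $\varphi_{j,\CC}(H_{0,\CC})\subseteq H_\CC$. This is precisely the fine structure theory of compactly causal symmetric spaces; once it is available, the $\SU(1,1)$‑reduction, the identification of the $\SU(1,1)$‑symmetric subgroup $H_0$ that occurs, and the remaining $2\times 2$ computation are all routine.
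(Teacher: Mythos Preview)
Your proposal is correct and follows essentially the same approach as the paper. The paper gives only a one-sentence proof sketch (``first proved for $\SU(1,1)$ and then generalized \dots\ using $\SU(1,1)$ reduction''), and your expansion---$G=KA_\frakq H$ followed by $\SU(1,1)$-reduction for $A_\frakq$, then the $*$-trick to pass from $G$ to $\Xi$---is precisely the analogue of the argument for Theorem~\ref{thm:Decomp2} with $N_\CC$ replaced by $H_\CC$, which is what the paper intends.
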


For $x\in P^+K_\CC H_\CC$ choose $k_H (x) \in K_\CC$ such that $x\in P^+ k_H(x) H_\CC$. Note that $k_H(x)$ is not unique, but the coset $k_H(x)(H_\CC \cap K_\CC)\in K_\CC/(H_\CC \cap K_\CC)$ is unique and the map $x\mapsto k_H (x)(H_\CC \cap K_\CC)$ is holomorphic.

We can now state the main results on the Hardy space on $\Xi_H$:

\begin{theorem}
	Let $(J_\pi,\calH_\pi^2)$ be a unitary highest weight representation. Then the following holds:
	\begin{enumerate}
		\item $\dim(\calH_\pi^2)^{-\infty,H}\leq1$.
		\item There exists an explicit constant $c_{G/H}>0$ such that for $0\neq\lambda \in (\calH_\pi^2)^{-\infty,H}$ we have
		\[ \Pi_{H,\pi,\lambda}(\calH_\pi^2) \subseteq \calH_2(\Xi_H) \quad\Leftrightarrow\quad \omega(\pi) >c_{G/H}, \]
		and in that case $\Pi_{H,\pi,\lambda} :\calH_\pi^2 \to \calH_2 (\Xi_H)$ is an
		$S^o$-intertwining operator.
		\item The contribution of the unitary highest weight representations to the Plancherel decomposition of
		$L^2(G/H)$ is given by
		\[ \bigoplus_{\omega(\pi)>c_{G/H}} \beta (\Pi_{H,\pi,\lambda}(\calH_\pi^2)), \]
		where we choose for every $\pi$ a non-trivial distribution vector $\lambda\in(\calH_\pi^2)^{-\infty,H}$ if possible, and otherwise use $\lambda=0$.
		\item  Let $(\pi,\calV_\pi)$ be an irreducible unitary representation of $K$ such that $(\calV_\pi^\vee)^{K\cap H}\not=\{0\}$. Let
		$\eta\in(\calV_\pi^\vee)^{K\cap H}\setminus \{0\}$. Then
		for every $\xi\in\calV_\pi$ the function
		\[ T_{H,\pi,\eta}\xi(x) = \eta\big(\pi(k_H(x)^{-1})\xi\big)\]
is holomorphic on $P^+K_\CC H_\CC /H_\CC$. If
		$\omega (\pi) > c_{G/H}$ then $\{T_{H,\pi,\eta}\xi:\xi\in\calV_\pi\}\subseteq\calH_2 (\Xi_H)$ is the highest weight space of a unitary highest weight representation $\calH_\pi^2$ in $\calH_2(\Xi_H)$.
	\end{enumerate}
\end{theorem}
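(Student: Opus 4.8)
The plan is to derive all four statements from the group case (Lemma~\ref{le:holods}) together with the structure theory of compactly causal symmetric spaces, in the spirit of \cite{OO88,HOO91} and \cite[Ch.~XII]{N00}.

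For part~(1) I would first identify $(\calH_\pi^2)^{-\infty}$ with a space of antiholomorphic $\calV_\pi^\vee$-valued functions on $D$, exactly as in the lemmas of Section~\ref{sec:GenWhittakerVectors}: the distribution vectors span $\overline{dJ_\pi(\calU(\frakg_\CC))\calH_\pi^2}$, and $dJ_\pi$ acts by holomorphic differential operators. An $H$-invariant $\lambda$ then corresponds to an antiholomorphic $\Lambda\colon D\to\calV_\pi^\vee$ satisfying an $H$-covariance $\Lambda(h^{-1}z)=j_\pi^\vee(h,z)\Lambda(z)$. Since $G/H$ is compactly causal we have $\frakg=\frakh+\frakk$, so the orbit $H\cdot eK$ is open in $D$; on this orbit $\Lambda$ is determined by $\Lambda(eK)\in(\calV_\pi^\vee)^{K\cap H}$, hence, by analytic continuation, on all of $D$. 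Thus $\lambda\mapsto\Lambda(eK)$ is injective, and the bound $\dim(\calH_\pi^2)^{-\infty,H}\le1$ follows once one knows $\dim(\calV_\pi^\vee)^{K\cap H}\le1$ for the highest weight spaces in question, a fact from the structure theory of these spaces.

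For parts~(2) and~(3) the intertwining property of $\Pi_{H,\pi,\lambda}$ is formal: $J_\pi$ extends to a holomorphic ${}^*$-representation of $S$ (Theorem~\ref{thm:HolomExtRep}), $J_\pi(\xi^{-1})u\in(\calH_\pi^2)^\infty$ for $\xi\in\Xi$ by Lemma~\ref{lem:SemigroupMapsIntoSmoothVectors}, right $H$-invariance is the $H$-covariance computation shown above, and holomorphy of $\xi\mapsto\Pi_{H,\pi,\lambda}(u)(\xi)$ on $\Xi^o$ follows since $J_\pi$ is holomorphic on $S^o$ and $\xi\mapsto\xi^{-1}$ maps $\Xi^o$ into $S^o$. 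The substantive step is the $L^2$-criterion, which I would prove by mimicking the computation behind Theorem~\ref{thm:HolDSContribution}: decompose $G$ along the commuting subgroups $G_j=\varphi_j(\SU(1,1))$ and the maximal abelian $A=\exp\fraka$ adapted to $G/H$, insert a $KAH$-type integral formula for $L^2(G/H)$, and reduce $\|\Pi_{H,\pi,\lambda}(u)\|_{L^2(G/H)}^2$ for $u$ in the lowest $K$-type to a product of rank-one integrals that one evaluates with the explicit $\SU(1,1)$ formulas of Theorem~\ref{lem:NKPdecompositionSU11}. Absolute convergence of the resulting gamma-type integral amounts to a linear inequality on the lowest restricted weight, which one packages as $\omega(\pi)>c_{G/H}$ with an explicit $c_{G/H}$; the uniform bound in the definition of $\calH_2(\Xi_H)$ then follows from monotonicity of $s\mapsto\|s\cdot F\|$. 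For~(3) one combines this with the fact that unitary highest weight representations form a discrete subset of $\widehat{G}$, so they occur discretely in $L^2(G/H)$, with Frobenius reciprocity identifying each embedding $\calH_\pi^2\hookrightarrow L^2(G/H)$ with a vector of $(\calH_\pi^2)^{-\infty,H}$, and with~(1) giving multiplicity at most one; since $\beta$ is an isometry, $\bigoplus_\pi\beta(\Pi_{H,\pi,\lambda}(\calH_\pi^2))$ is then exactly the unitary highest weight contribution.

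Finally, for part~(4) I would use the preceding lemma $\Xi\subseteq P^+K_\CC H_\CC$ and the holomorphic map $x\mapsto k_H(x)(K_\CC\cap H_\CC)$ to evaluate $\Pi_{H,\pi,\lambda}$ on the lowest $K$-type $\{K_e\xi:\xi\in\calV_\pi\}$: unwinding the cocycle and using left $P^+$- and right $H_\CC$-invariance of the $K_\CC$-component, the matrix coefficient collapses to $\eta(\pi(k_H(x)^{-1})\xi)$ with $\eta=\Lambda(eK)$, and holomorphy on $P^+K_\CC H_\CC/H_\CC$ is then immediate. Conversely, for an arbitrary $\eta\in(\calV_\pi^\vee)^{K\cap H}$ the same formula is $K$-equivariant with $\frakp^+$ acting trivially, because $k_H$ is left $P^+$-invariant modulo $K_\CC\cap H_\CC$, hence it defines a highest weight space; an $\SU(1,1)$-reduction together with~(1)--(3) shows that this space lies in $\calH_2(\Xi_H)$ and generates a copy of $\calH_\pi^2$ there precisely when $\omega(\pi)>c_{G/H}$. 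The main obstacle is the $L^2$-estimate in~(2): pinning down the exact value of $c_{G/H}$ and the convergence range requires the correct $A$-radial integral formula for $L^2(G/H)$ together with a careful $\SU(1,1)$-reduction, analogous to but heavier than the $G/N$ computation of Section~\ref{sec:DiscreteSpectrumL2GN}; everything else is either formal or a reference to \cite{OO88,HOO91,N00}.
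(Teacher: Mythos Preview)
The paper does not actually prove this theorem: it appears in Appendix~\ref{App:Hardy} as a summary of known results, with the proofs deferred entirely to \cite{OO88,HOO91,O00} and \cite[Ch.~XI, XII, XIV]{N00}. So there is no ``paper's own proof'' to compare against; the relevant question is whether your sketch is correct.

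Your proposal contains a genuine error in part~(1). You claim that for compactly causal symmetric spaces one has $\frakg=\frakh+\frakk$, so that $H\cdot eK$ is open in $D$ and an $H$-invariant antiholomorphic function is determined by its value at $eK$. This is false. Already for $G=\SL(2,\RR)$ with $\tau(X)=JXJ^{-1}$, $J=\diag(1,-1)$, one has $H=\SO_0(1,1)$ (diagonal matrices), $\frakh$ and $\frakk$ are each one-dimensional and linearly independent, but $\dim\frakg=3$, so $\frakh+\frakk\subsetneq\frakg$. Concretely, $H$ acts on the upper half plane by $z\mapsto e^{2t}z$, and the orbit of $i$ is the positive imaginary axis, a one-dimensional curve in a two-dimensional domain. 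Your injectivity argument therefore collapses: the value $\Lambda(eK)$ does not determine an antiholomorphic function on $D$ by analytic continuation from a lower-dimensional orbit. The multiplicity-one statement in \cite{OO88,HOO91} is proved by quite different means, essentially via analytic continuation of $H$-spherical functions from the Riemannian dual, or by analyzing the $(\frakg,K)$-module structure of the space of $H$-invariants directly; there is no shortcut of the kind you propose.

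The remaining parts of your sketch are broadly in the right spirit (holomorphic extension via the semigroup, $\SU(1,1)$-reduction for the $L^2$-estimate, Frobenius reciprocity for the Plancherel statement), and this is indeed roughly how the cited references proceed. But since part~(1) feeds into~(3), and since you yourself flag the $L^2$-estimate in~(2) as the heavy step without carrying it out, the proposal as written does not constitute a proof; it is an outline pointing at \cite{OO88,HOO91,N00}, which is exactly what the paper itself does.
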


\begin{remark}
	\begin{enumerate}
		\item We would like to underline the difference that here one needs to use a distribution vector to construct the embedding of the highest weight representation $\calH_\pi^2$ into $L^2(G/H)$. This is a consequence of the fact that there is no $H$-invariant vector in $\calH_\pi^2$. This does not show up in the groups case until one consider the group as a homogeneous space $(G\times G)/\diag(G)$ and identifies $\calH_\pi^2\otimes(\calH_\pi^2)^*$ with the space $\HS(\calH_\pi^2)$ of Hilbert--Schmidt operators on $\calH_\pi^2$. The trace is then the (up to scalar multiples) unique $G$-invariant distribution vector and the intertwining operator is
		\[ T\mapsto\big[s\mapsto\dim(\pi) \tr (T\pi (s^{-1}))\big], \quad T\in\HS(\calH_\pi^2), s\in \Xi, \]
		where $\dim(\pi) $ is the formal dimension of $\calH_\pi^2$.
		\item Part (4) is \cite[Lem 5.1]{O00}, where a Harish-Chandra type formula for $\Lambda_\pi$ as a hyper-function is given. In fact, up to a known constant, $\Lambda_\pi$ is an $H$-invariant spherical functions on the Cartan dual of $G/H$.
		\item If $G/K\simeq \RR^k +i \Omega$ is of tube type, then there exists a symmetric group $H\subseteq G$ locally isomorphic to the automorphism group of the cone $\Omega$. In this case, assuming that $G$ is simple, the group $H$ has one-dimensional center whose connected component $Z(H)_o$ is  a vector group. Each unitary character $\chi : Z(H)_o \to \TT$ defines a line bundle over $G/H$. One can then also consider $\chi$-twisted Hardy spaces over $\Xi_H$.
		\item In general, we might have $c_{G/H} < \frac{2n}{r}-1 $ in which case there are unitary highest weight modules that can be realized inside $L^2(G/H)$ but do not belong to the holomorphic discrete series of $G$. A detailed discussion of this for $G$ classical and $\dim \calV_\pi =1$ can be found in the last part of \cite{OO88}.
	\end{enumerate}
\end{remark}


\providecommand{\bysame}{\leavevmode\hbox to3em{\hrulefill}\thinspace}
\providecommand{\href}[2]{#2}

\end{document}